\renewcommand{\phi}[0]{\varphi}
\renewcommand{\theta}[0]{\vartheta}
\renewcommand{\epsilon}[0]{\varepsilon}
\newcommand{\N}{\text{$\mathbb{N}$}}
\newcommand{\Z}{\text{$\mathbb{Z}$}}
\newcommand{\Mod}[1]{\ (\text{mod} \ #1)}
\DeclareMathOperator{\Ap}{Ap}
\newtheorem{theorem}{Theorem}[section]
\newtheorem{lemma}[theorem]{Lemma}
\newtheorem{conjecture}[theorem]{Conjecture}
\theoremstyle{definition}
\newtheorem{definition}[theorem]{Definition}
\newtheorem{example}[theorem]{Example}
\theoremstyle{remark}
\newtheorem{remark}[theorem]{Remark}
\numberwithin{equation}{section}
\begin{document}

\bibliographystyle{amsplain}

\author{S. Ugolini}
\address{Dipartimento di Matematica, Universit\`{a} degli studi di Trento, Via Sommarive 14, I-38123 (Italy)}
\email{s.ugolini@unitn.it}

\date{}
\keywords{Numerical semigroups, Diophantine equations, Frobenius problem}
\subjclass[2010]{11D07, 20M14}

\title[]
{On permutation numerical semigroups}

\begin{abstract}
In this paper we introduce the notion of $n$-permutation numerical semigroup. While there are just three $2$-permutation numerical semigroups, there are infinitely many $n$-permutation numerical semigroups if $n > 2$. We construct $16$ families of $3$-permutation numerical semigroups and one family of $n$-permutation numerical semigroups. Finally we present some experimental data, which seem to support a conjecture about the classification of $3$-permutation numerical semigroups.
\end{abstract}

\maketitle

\section{Introduction}
A numerical semigroup $G$ is a co-finite submonoid of the monoid of non-negative integers $(\N, +, 0)$ (see \cite{ros_ns} for a comprehensive monograph). 

A set $S \subseteq \N$ generates $G$, namely $G = \langle S \rangle$, if and only if the greatest common divisor $\gcd(S)$ of the elements contained in $S$ is equal to $1$.

We can order the elements of $G$ in such a way that
\begin{equation*}
G = \{g_i : i \in \N \},
\end{equation*}
where any $g_i$ is a non-negative integer, $g_0 = 0$ and $g_i < g_{i+1}$ for any $i \in \N$.

We can  associate with $G$ the strictly increasing sequence
\begin{equation*}
g := (g_0, g_1, \dots) = (g_i)_{i \in \N}.
\end{equation*} 

The sequence $g$ can be reduced modulo a positive integer $l$ defining
\begin{equation*}
g \bmod l := (g_0 \bmod l, g_1 \bmod l, \dots) = (g_i \bmod l)_{i \in \N}.
\end{equation*}
If the value of $l$ is not ambiguous, then we  simply write
\begin{equation*}
\overline{g} = (\overline{g_i})_{i \in \N}.
\end{equation*}

We give the following definition.
\begin{definition}
Let $G := \{g_i : i \in \N \}$ be a numerical semigroup, where any $g_i$ is a non-negative integer, $g_0 = 0$ and $g_i < g_{i+1}$ for any $i \in \N$.

Let $n \in \N^* := \N \backslash \{ 0 \}$ and $S := \{g_1, g_2, \dots, g_n \}$.

We say that $G$ is a \emph{$n$-permutation numerical semigroup} (briefly $n$-permutation semigroup)  if $G = \langle S \rangle$ and for any non-negative integer $k$ the $n$-tuple
\begin{equation*}
(\overline{g_{kn+1}}, \overline{g_{kn+2}}, \dots, \overline{g_{kn+n}}) 
\end{equation*}
contains exactly one representative for each residue class of $\Z / n \Z$. 
\end{definition}
\begin{remark}
Rephrasing the definition, we can associate with any $n$-permutation semigroup $G$ an infinite string $g := (g_i)_{i \in \N^*}$ such that the modular string $
\overline{g} := (\overline{g_i})_{i \in \N^*}$ is obtained through the concatenation of infinitely many strings of $(\Z / n \Z)^n$, called $n$-permutations, each of which containing no repetitions. 

We notice in passing that $\overline{g}$ is ultimately periodic since $G$ is a co-finite submonoid of $(\N, +, 0)$. 
\end{remark}

\begin{example}
Let $G := \langle \{9, 14,15, 16 \} \rangle$. 

The increasing sequence of non-zero elements of $G$ is
\begin{equation*}
g = (9, 14, 15, 16, 18, 23, 24, 25, 27, 28, 29, 30, 31, 32, 33, 34, 36, \rightarrow),
\end{equation*}
which corresponds to the (modulo $4$) sequence 
\begin{equation*}
\overline{g} = (\bar{1}, \bar{2}, \bar{3}, \bar{0}, \bar{2}, \bar{3}, \bar{0}, \bar{1}, \bar{3}, \bar{0}, \bar{1}, \bar{2}, \bar{3}, \bar{0}, \bar{1}, \bar{2}) \circ (\bar{0}, \bar{1}, \bar{2}, \bar{3}) \circ (\bar{0}, \bar{1}, \bar{2}, \bar{3}) \circ \dots,
\end{equation*}
namely
\begin{equation*}
\overline{g} = (\bar{1}, \bar{2}, \bar{3}, \bar{0}) \circ (\bar{2}, \bar{3}, \bar{0}, \bar{1}) \circ (\bar{3}, \bar{0}, \bar{1}, \bar{2})^2 \circ (\bar{0}, \bar{1}, \bar{2}, \bar{3})^{\infty}.
\end{equation*}
\end{example}

The paper is organized as follows.
\begin{itemize}
\item In Section \ref{sec_2_pns} we show that there are just three $2$-permutation semigroups.
\item In Section \ref{sec_3_pns} we construct $16$ families $\{ H_{i,k} \}_{i=1}^{16}$ of $3$-permutation semigroups, namely we show that for any positive integer $k$ the numerical semigroup, whose set of generators $S$ is one of the following, is a $3$-permutation semigroup.
\begin{displaymath}
\begin{array}{|c|c|}

\hline
\text{Family} & {S}\\
\hline 
H_{1,k} & \{ 3k, 3k+1, 6k-1 \}\\ 
\hline 
H_{2,k} & \{ 6k+1, 6k+2, 9k+3 \}\\ 
\hline 
H_{3,k} & \{ 6k+1, 9k+2, 9k+3 \}\\ 
\hline 
H_{4,k} & \{ 6k+1, 6k+3, 6k+5 \}\\ 
\hline 
H_{5,k} & \{ 6k+1, 12k-4, 12k \}\\ 
\hline 
H_{6,k} & \{ 3k+1, 6k-1, 6k \}\\ 
\hline 
H_{7,k} & \{ 3k+2, 3k+3, 3k+4 \}\\ 
\hline 
H_{8,k} & \{ 12k+2, 12k+4, 18k+3 \}\\ 
\hline 
H_{9,k} & \{ 3k+2, 6k+1, 6k+3 \}\\ 
\hline 
H_{10,k} & \{ 6k+3, 6k+5, 12k+4 \}\\ 
\hline 
H_{11,k} & \{ 6k+4, 6k+5, 9k+6  \}\\ 
\hline 
H_{12,k} & \{ 12k+4, 18k+3, 18k+5 \}\\ 
\hline 
H_{13,k} & \{ 6k+5, 9k+6, 9k+7 \}\\ 
\hline 
H_{14,k} & \{ 6k+5, 12k+4, 12k+6 \}\\ 
\hline 
H_{15,k} & \{ 12k+8, 12k+10, 18k+15 \}\\ 
\hline 
H_{16,k} & \{ 12k+8, 18k+13, 18k+15 \}\\ 
\hline 
\end{array}
\end{displaymath} 
\item In Section \ref{sec_n_pns} we present some experimental data about $3$-permutation semigroups. Driven by such experiments we conjecture that any $3$-permutation  semigroup $G$ having multiplicity 
\begin{equation*}
m := \min \{x \in G: x > 0 \}
\end{equation*}
at least equal to $12$ belongs to one of the $16$ families studied in Section \ref{sec_3_pns}. In the last part of the section we construct one family of $n$-permutation  semigroups for any positive integer $n \geq 3$. 
\end{itemize}

\section{Preliminaries}
We introduce some notations we will use in the rest of the paper.
\begin{itemize}
\item If $\{a, b \} \subseteq \N$ with $a \leq b$, then 
\begin{displaymath}
\begin{array}{lcl}
[a, b] & := & \{x \in \N : a \leq x \leq b \},\\
\left[a, b \right]_2 & := & \{x \in [a,b] : x \equiv a \Mod{2} \}.
\end{array}
\end{displaymath}
In particular $[a,a] = [a,a]_2 = \{ a \}$.

\item If $A$ and $B$ are two non-empty subsets of $\N$ such that $a < b$ (resp. $a \leq b$) for any pair $(a,b) \in A \times B$, then we write $A <B$ (resp. $A \leq B$).

\item If $G$ is a numerical semigroup, then 
\begin{equation*}
F(G) := \max \{x: x \in \Z \backslash G \}
\end{equation*}
is called the Frobenius number of $G$.

\item If $n \in G \backslash \{ 0 \}$, then the set
\begin{equation*}
\Ap (G,n) := \{s \in G: s - n \not \in G \}
\end{equation*}
is called the Ap\'ery set of $G$ with respect to $n$.
\end{itemize}

The following relation between the Ap\'ery set and the Frobenius number will be used throughout the paper.

\begin{lemma}
If $G$ is a numerical semigroup and $n \in G \backslash \{ 0 \}$, then
\begin{equation*}
F(G) = \max (\Ap(G,n)) - n.
\end{equation*}
\end{lemma}

The following lemmas will be used repeatedly in the paper (see \cite[Lemma 1]{rod} and \cite[Section 5]{rod}).

\begin{lemma}\label{lem_arith_seq}
Let $k$ and $e$ be two positive integers and 
\begin{equation*}
S:=\{a_i \}_{i =0}^k,
\end{equation*}
where $a_0$ is a positive integer and $a_i := a_0 + i e$ for any $i \in [1,k]$.

Then $x \in G := \langle S \rangle$ if and only if 
\begin{equation*}
x = a_0 q + e r
\end{equation*}
with $\{q, r \} \subseteq \N$ and $0 \leq r \leq k q$. 

Moreover
\begin{equation*}
F(G) = a_0 \left\lfloor \frac{a_0-2}{k} \right\rfloor + e (a_0-1).
\end{equation*}
\end{lemma}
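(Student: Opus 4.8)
The plan is to prove the two assertions in turn: first the membership criterion, and then the Frobenius formula, which will rest on the criterion together with the Apéry-set lemma stated just above.

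For the membership criterion I would simply unwind the definition of $G = \langle S \rangle$. Any $x \in G$ can be written as $x = \sum_{i=0}^{k} c_i a_i$ with all $c_i \in \N$; putting $q := \sum_{i=0}^k c_i$ and $r := \sum_{i=0}^k i\,c_i$ and substituting $a_i = a_0 + ie$ gives $x = a_0 q + er$, while the fact that each index $i$ contributing to $r$ lies in $[0,k]$ yields $0 \le r \le kq$ immediately. For the converse I would exhibit an explicit nonnegative solution: given $q,r \in \N$ with $0 \le r \le kq$, write $r = jk + t$ with $0 \le t < k$, so that $j = \lfloor r/k\rfloor \le q$; then take $c_k = j$, $c_t = 1$, $c_0 = q-j-1$ (all other $c_i = 0$) when $t > 0$, and $c_k = j$, $c_0 = q-j$ when $t = 0$. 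In either case $\sum c_i = q$ and $\sum i\,c_i = r$, so $x = a_0 q + er \in G$; the only point to verify is $c_0 \ge 0$, which holds because $t > 0$ forces $j < q$.

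For the Frobenius number I would invoke the Apéry-set lemma with $n = a_0$ (we may assume $\gcd(a_0,e)=1$, since otherwise $G$ is not co-finite and $F(G)$ is undefined, so $a_0 \in G\setminus\{0\}$ is a legitimate choice). By the criterion, an element of $G$ has the shape $a_0 q + er$ with $0 \le r \le kq$ and residue $er \bmod a_0$; for a fixed $r$ the least admissible $q$ is $\lceil r/k\rceil$, so the smallest element of $G$ realizable with that $r$ is $\phi(r) := a_0\lceil r/k\rceil + er$. The key step is the monotonicity observation that $\phi$ is nondecreasing in $r$, both summands being nondecreasing; hence within each residue class modulo $a_0$ the minimum is attained at the least nonnegative $r$ of that class. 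Since $\gcd(e,a_0)=1$, as $r$ ranges over $\{0,1,\dots,a_0-1\}$ the residues $er \bmod a_0$ sweep out all of $\Z/a_0\Z$, so these $a_0$ values are pairwise incongruent and therefore $\Ap(G,a_0) = \{\phi(r) : 0 \le r \le a_0-1\}$.

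Finally, monotonicity of $\phi$ gives $\max \Ap(G,a_0) = \phi(a_0-1) = a_0\lceil (a_0-1)/k\rceil + e(a_0-1)$. Subtracting $a_0$ as the Apéry-set lemma prescribes and rewriting $\lceil (a_0-1)/k\rceil = \lfloor (a_0-2)/k\rfloor + 1$ (an identity valid for every $a_0 \ge 1$) yields $F(G) = a_0\lfloor (a_0-2)/k\rfloor + e(a_0-1)$, as claimed. I expect the main obstacle to be the bookkeeping in the third paragraph, namely confirming that the greedy value $\phi(r)$ really is the minimal element of $G$ in its residue class and that one never gains by enlarging $r$ within a class; the monotonicity of $\phi$ is exactly what makes this clean, and the concluding floor–ceiling identity is the only remaining point needing a moment's care.
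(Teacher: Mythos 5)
Your proof is correct. There is nothing in the paper to compare it against: the lemma is quoted from the literature (the reference cited just before its statement) and the paper supplies no proof of its own, so your argument stands as a self-contained justification of a result the paper takes as given. The route you take is the standard one for arithmetic-sequence generators, and every step checks out: in the membership criterion the forward direction is immediate from $q:=\sum c_i$, $r:=\sum i c_i$, and in the converse your coefficient assignment is consistent because $t>0$ forces $jk<r\le kq$, hence $j<q$ and $c_0=q-j-1\ge 0$; in the Frobenius computation the function $\phi(r)=a_0\lceil r/k\rceil+er$ is (strictly) increasing, which correctly reduces the minimum over a residue class $er_0 \bmod a_0$ to $\phi(r_0)$ with $0\le r_0\le a_0-1$, and since $\gcd(e,a_0)=1$ these $a_0$ values form exactly $\Ap(G,a_0)$; finally $\lceil(a_0-1)/k\rceil=\lfloor(a_0-2)/k\rfloor+1$ holds for all integers $a_0$ and positive $k$, giving the stated formula after subtracting $a_0$. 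You are also right to make explicit the hypothesis $\gcd(a_0,e)=1$, which the lemma statement leaves implicit but which is needed both for $G$ to be co-finite and for the residues $er \bmod a_0$ to sweep out all of $\Z/a_0\Z$.
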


The following fact, whose proof is immediate, will be used repeatedly in Lemmas \ref{family_1} - \ref{family_16}.

\begin{lemma}
Let $(x_0, x_1, x_2)$ be a triple in $\N^3$ such that one of the following holds: 
\begin{itemize}
\item $x_i = x_0 + i$ for $i \in \{1, 2 \}$;
\item $x_i = x_0 + 2 i$ for $i \in \{1, 2 \}$.
\end{itemize}
Then $(\overline{x_0}, \overline{x_1}, \overline{x_2})$ is a $3$-permutation.
\end{lemma}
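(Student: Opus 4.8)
The plan is to treat both hypotheses uniformly by observing that in each case the triple $(x_0, x_1, x_2)$ is an arithmetic progression $x_0,\, x_0 + d,\, x_0 + 2d$ whose common difference $d \in \{1, 2\}$ is coprime to $3$. Since a $3$-permutation is precisely a triple of $(\Z / 3 \Z)^3$ meeting every residue class of $\Z / 3 \Z$ exactly once, it suffices to show that the three residues $\overline{x_0}, \overline{x_1}, \overline{x_2}$ are pairwise distinct; as there are exactly three classes in $\Z / 3 \Z$, pairwise distinctness is equivalent to surjectivity onto $\Z / 3 \Z$.

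First I would record that $\gcd(d,3)=1$ for both admissible values $d=1$ and $d=2$. Consequently multiplication by $d$ is a bijection of $\Z / 3 \Z$, so the three elements $0 \cdot d,\, 1 \cdot d,\, 2 \cdot d$ are pairwise incongruent modulo $3$; equivalently $\{\, \overline{0}, \overline{d}, \overline{2d} \,\} = \{ \overline{0}, \overline{1}, \overline{2} \}$. Next I would add the constant $x_0$. Since translation by $x_0$ is also a bijection of $\Z / 3 \Z$, the set $\{\, \overline{x_0}, \overline{x_0 + d}, \overline{x_0 + 2d} \,\}$ again equals $\{ \overline{0}, \overline{1}, \overline{2} \}$. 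In the first hypothesis this set is $\{ \overline{x_0}, \overline{x_1}, \overline{x_2} \}$ with $d = 1$, and in the second with $d = 2$, so in either case $(\overline{x_0}, \overline{x_1}, \overline{x_2})$ contains exactly one representative of each residue class, which is the definition of a $3$-permutation.

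There is essentially no obstacle here, as the author already signals by calling the proof immediate: the only point worth isolating is the coprimality $\gcd(d,3)=1$, which is what forces the progression to be a complete residue system modulo $3$. The statement would fail for a common difference divisible by $3$ (for instance $d = 3$, where all three terms share the residue $\overline{x_0}$), so the restriction to $d \in \{1, 2\}$ in the two bulleted hypotheses is exactly what the argument requires.
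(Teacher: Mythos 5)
Your proof is correct, and since the paper omits the argument entirely (declaring it immediate), you have simply supplied the intended reasoning: the triple is an arithmetic progression with common difference $d\in\{1,2\}$ coprime to $3$, so its residues modulo $3$ are pairwise distinct and hence form a complete residue system. The observation that the restriction to $d\in\{1,2\}$ is exactly what makes the statement true is a nice sanity check, but there is nothing here that diverges from what the paper takes for granted.
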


\section{Classification of $2$-permutation semigroups}\label{sec_2_pns}
The numerical semigroups 
\begin{align*}
G_1 & := \langle 1, 2 \rangle = \N, \\
G_2 & := \langle 2, 3 \rangle = \{2, \rightarrow \},\\
G_3 & := \langle 3, 4 \rangle = \{3, 4, 6, \rightarrow \}
\end{align*}
are the only $2$-permutation semigroups.

Indeed, one can easily check that $G_1$, $G_2$ and $G_3$ are $2$-permutation semigroups.

Now we suppose that $G$ is a $2$-permutation semigroup generated by $S:=\{a,b\}$, where $a$ and $b$ are two coprime positive integers with $2 \leq a < b$. 

According to the definition of $2$-permutation semigroups the following hold: 
\begin{itemize}
\item $a$ and $b$ have different parity, namely $b = a + h$ for some  odd integer $h$;
\item $b < 2a$.
\end{itemize}  

We have that
\begin{align*}
S & = \{a, a+h \},\\
2 S & = \{2a, 2a+h, 2a+ 2h \},\\
3 S & = \{3a, 3a+h, 3a+ 2h, 3a + 3h \},\\
4 S & = \{4a, \dots \}.
\end{align*}

First we suppose that $a$ is odd and $b$ is even. 

We deal with different cases.

\begin{itemize}
\item \emph{Case 1:} $h < \frac{a}{3}$. 

The increasing sequence of the first $10$ elements of $G$ is
\begin{equation*}
(a, a+h, 2a, 2a+h, 2a + 2h, 3a, 3a+h, 3a+2h, 3a+3h,4a),
\end{equation*}
which reads (modulo $2$) as follows:
\begin{equation*}
(\overline{1}, \overline{0}, \overline{0}, \overline{1}, \overline{0}, \overline{1}, \overline{0}, \overline{1}, \overline{0}, \overline{0}).
\end{equation*}
Hence $G$ cannot be a $2$-permutation semigroup.

\item \emph{Case 2:} $\frac{1}{3} a < h < \frac{1}{2} a$. 

The increasing sequence of the first $10$ elements of $G$ is
\begin{equation*}
(a, a+h, 2a, 2a+h, 2a + 2h, 3a, 3a+h, 3a+2h, 4a, 3a+3h),
\end{equation*}
which reads (modulo $2$) as follows:
\begin{equation*}
(\overline{1}, \overline{0}, \overline{0}, \overline{1}, \overline{0}, \overline{1}, \overline{0}, \overline{1}, \overline{0}, \overline{0}).
\end{equation*}

Also in this case $G$ cannot be a $2$-permutation semigroup.

\item \emph{Case 3:} $\frac{1}{2} a < h < a$.

The increasing sequence of the first $8$ elements of $G$ is
\begin{equation*}
(a, a+h, 2a, 2a+h, 3a, 2a + 2h, 3a+h, 4a),
\end{equation*}
which reads (modulo $2$) as follows:
\begin{equation*}
(\overline{1}, \overline{0}, \overline{0}, \overline{1}, \overline{1}, \overline{0},  \overline{0}, \overline{0}).
\end{equation*}

\item \emph{Case 4:} $h = \frac{1}{3} a$.

We notice that  $3 \mid a$ because $h \in \N$.

If $a = 3$, then $G = G_3$.

If $a \not = 3$, then $h > 1$.  Therefore $\gcd (a,b)> 1$ and $G$ is not a numerical semigroup.

\end{itemize}

Now we suppose that $a$ is even and $b$ is odd.

We distinguish two different cases.
\begin{itemize}
\item \emph{Case 1:} $h < \frac{a}{2}$.

The increasing sequence of the first $6$ elements of $G$ is
\begin{equation*}
(a, a+h, 2a, 2a+h, 2a + 2h, 3a),
\end{equation*}
which reads (modulo $2$) as follows:
\begin{equation*}
(\overline{0}, \overline{1}, \overline{0}, \overline{1}, \overline{0}, \overline{0}).
\end{equation*}
Hence $G$ cannot be a $2$-permutation semigroup. 

\item \emph{Case 2:} $\frac{a}{2} < h < a$.

The increasing sequence of the first $6$ elements of $G$ is
\begin{equation*}
(a, a+h, 2a, 2a+h, 3a, 2a + 2h),
\end{equation*}
which reads (modulo $2$) as follows:
\begin{equation*}
(\overline{0}, \overline{1}, \overline{0}, \overline{1}, \overline{0}, \overline{0}).
\end{equation*}
Hence $G$ cannot be a $2$-permutation semigroup.

\item \emph{Case 3:} $h = \frac{a}{2}$.

If $a = 2$, then $G = G_2$. 

If $a > 2$, then $h > 1$ and $\gcd (a,b) > 1$, namely $G$ is not a numerical semigroup. 
\end{itemize}

\section{Sixteen families of $3$-permutation semigroups}\label{sec_3_pns}

In the statements of Lemmas \ref{family_1} - \ref{family_16} we always suppose that $G:=\langle S \rangle$, where 
\begin{equation*}
S :=  \{ a_1, a_2, a_3 \}
\end{equation*}
is a subset of $\N^*$ such that
\begin{equation*}
a_1 < a_2 < a_3.
\end{equation*}

We denote by $g := (g_i)_{i=1}^{\infty}$ the increasing sequence of the positive elements in $G$ and by $\overline{g}$ its  reduction modulo $3$.

Moreover, if $A$ is a subset of $\N$, we denote by $g \cap A$ the subsequence of $g$ formed by the elements of $g$ belonging to $A$ and by $\overline{g \cap A}$ its reduction modulo $3$. 

For example, if 
\begin{align*}
g & := (5,7,9,10,12,14, \rightarrow),\\
A & := \{7, 9, 10, 12 \},
\end{align*}
then
\begin{align*}
g \cap A & = (7,9,10,12),\\
\overline{g \cap A} & = (\bar{1}, \bar{0}, \bar{1}, \bar{0}). 
\end{align*}

\begin{lemma}\label{family_1}
Let $S := \{3 k, 3 k +1, 6 k-1 \}$ for some positive integer $k$ and
\begin{equation*}
H_{1,k} := \cup_{i \in \N} (A_{i,k} \cup B_{i,k}), 
\end{equation*}
where
\begin{align*}
A_{i,k} & := [(2i) 3k  - i, (2i) 3 k + 2i],\\
B_{i,k} & := [(2i+1) 3k - i, (2i+1) 3k  + 2i+1 ],
\end{align*}
for any $i \in \N$.

The following hold.
\begin{enumerate}
\item $H_{1,k}$ is a submonoid of $(\N, +,0)$ containing $S$.
\item $A_{i,k} < B_{i,k}$ for any $i \in [0, k-1]$.
\item $B_{i,k} < A_{i+1,k}$ for any $i \in [0,k-1]$.
\item $[(2(k-1)+1) 3k - (k-1), \infty[ \subseteq H_{1,k}$.
\item $G = H_{1,k}$.
\item $H_{1,k}$ is a $3$-permutation semigroup.
\end{enumerate}
\end{lemma}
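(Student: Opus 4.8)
The plan is to treat parts (1)--(5) as scaffolding that pins down the exact shape of $G$ as a union of arithmetic intervals, and then to read off the $3$-permutation property in part (6) from the residues and \emph{positions} of the endpoints of these intervals. The engine for everything is that the index behaves additively under sumsets: writing $X + Y := \{x + y : x \in X, y \in Y\}$ (the sumset of two integer intervals is again the full integer interval obtained by adding the endpoints), a direct endpoint computation gives
\begin{align*}
A_{i,k} + A_{j,k} & = A_{i+j,k}, \\
A_{i,k} + B_{j,k} & = B_{i+j,k}, \\
B_{i,k} + B_{j,k} & \subseteq A_{i+j+1,k}.
\end{align*}
From these part (1) is immediate: $0 \in A_{0,k}$, the generators lie in $B_{0,k} \cup A_{1,k}$ (indeed $\{3k, 3k+1\} = B_{0,k}$ and $6k-1 \in A_{1,k}$), and any sum of two elements of $H_{1,k}$ lands in one of the blocks on the right, so $H_{1,k}$ is closed under addition. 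Parts (2) and (3) are pure inequalities between endpoints: comparing $\max A_{i,k} = 6ik+2i$ with $\min B_{i,k} = 6ik+3k-i$ reduces (2) to $i < k$, and comparing $\max B_{i,k}$ with $\min A_{i+1,k}$ reduces (3) to $3i+2 < 3k$; both hold on $[0,k-1]$.

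For part (4) I would compute the gaps between consecutive blocks: the number of integers missing between $A_{i,k}$ and $B_{i,k}$ is $3(k-i)-1$, and between $B_{i,k}$ and $A_{i+1,k}$ it is $3(k-i-1)$. Both are $\leq 0$ once $i \geq k-1$, so from $\min B_{k-1,k}$ onwards the blocks overlap or abut and their union is an unbroken ray; a short induction on $i \geq k-1$ showing $[\min B_{k-1,k}, \max B_{i,k}] \subseteq H_{1,k}$, together with $\max B_{i,k} \to \infty$, yields (4). Part (5) has two inclusions: $G \subseteq H_{1,k}$ is exactly part (1), while for $H_{1,k} \subseteq G$ I would induct on $i$ using the \emph{equalities} $A_{i,k} = A_{1,k} + A_{i-1,k}$ (for $i \geq 2$) and $B_{i,k} = A_{i,k} + B_{0,k}$ (for $i \geq 1$); the base cases $A_{0,k} = \{0\}$, $B_{0,k} \subseteq G$ and $A_{1,k} = \{6k-1,6k,6k+1,6k+2\} \subseteq G$ are checked by hand, and since sumsets of integer intervals are full intervals, every element of every block is realized as a sum of elements already known to be in $G$.

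The substance is part (6). First, $g_1 = 3k$, $g_2 = 3k+1$, $g_3 = 6k-1$ are the three smallest positive elements (parts (2)--(3) show nothing lies between $B_{0,k}$ and $A_{1,k}$), so $S = \{g_1,g_2,g_3\}$ and $G = \langle S \rangle$ is the correct generating set. Next I would track residues along $g$: within any block consecutive elements differ by $1$, so residues increase by $1 \Mod{3}$; across the gap $B_{i,k} \to A_{i+1,k}$ the jump is $\equiv 1 \Mod{3}$; and across the gap $A_{i,k} \to B_{i,k}$ the jump is $\equiv 0 \Mod{3}$, i.e. the residue \emph{repeats}. Thus $\overline{g}$ advances by $1$ at every step except for a single stutter at each transition $A_{i,k} \to B_{i,k}$, $1 \leq i \leq k-1$. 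A consecutive triple fails to be a $3$-permutation only if it contains such a stutter, so it remains to show every stutter falls on a triple boundary. Counting positions, the index of $\max A_{i,k}$ in $g$ equals $2 + \sum_{j=1}^{i}(3j+1) + \sum_{j=1}^{i-1}(3j+2) \equiv 0 \Mod{3}$, so the repeat, occurring between $\max A_{i,k}$ (at a position divisible by $3$) and the next element $\min B_{i,k}$, sits exactly between one block of three and the next. Hence every aligned triple has the form $(s, s+1, s+2)$ and is a $3$-permutation, proving (6).

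The main obstacle is precisely this last position count: one must verify that \emph{every} residue repeat lands at a multiple-of-three position, which forces careful bookkeeping of the block lengths $|A_{i,k}| = 3i+1$ and $|B_{i,k}| = 3i+2$ modulo $3$. Everything else is endpoint arithmetic; the one genuinely global constraint is that these sizes conspire—each matched pair $A_{i,k}, B_{i,k}$ contributes $6i+3 \equiv 0 \Mod{3}$ elements—to keep the stutters synchronized with the triple grid.
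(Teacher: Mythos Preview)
Your argument is correct and follows the same architecture as the paper (establish the block decomposition, then read off the $3$-permutation property from block sizes and transition residues), with tactical variations: for (1) you use the clean sumset identities $A_i + A_j = A_{i+j}$, $A_i + B_j = B_{i+j}$, $B_i + B_j \subseteq A_{i+j+1}$ where the paper argues via a unified floor bound; for (4) you analyze inter-block gaps where the paper does division with remainder by $3k$; for (5) you induct on those same sumset equalities where the paper invokes its arithmetic-progression lemma to exhibit each element explicitly in $\langle S\rangle$. For (6) the two arguments are dual: the paper groups $B_{i,k}\cup A_{i+1,k}$ (total size $6i+6$, with the $B_i\to A_{i+1}$ jump $\equiv +1$ so the whole chunk is a run of $3$-permutations), while you instead isolate the residue repeats at $A_i\to B_i$ and verify they land at positions $\equiv 0\pmod 3$; same bookkeeping, different packaging. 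One small slip to fix in (4): you claim both gaps are $\le 0$ once $i\ge k-1$, but the $A_i\to B_i$ gap at $i=k-1$ equals $3\cdot 1-1=2>0$. This is harmless, since from $\min B_{k-1,k}$ onward the sequence of gaps actually encountered is $B_{k-1}\to A_k,\ A_k\to B_k,\ B_k\to A_{k+1},\dots$ with values $0,-1,-3,\dots$, all $\le 0$; so your induction goes through once you correct the index at which the first gap becomes nonpositive (it is $i\ge k$, not $i\ge k-1$).
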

\begin{proof}
\begin{enumerate}[leftmargin=*]
\item Since $\{3k, 3k+1 \} = B_{0,k}$ and $6k-1 \in A_{1,k}$ we have that $S \subseteq H_{1,k}$. 

If $\{x, y \} \subseteq H_{1,k}$, then 
\begin{align*}
j_1 (3k) - \left\lfloor \frac{j_1}{2} \right\rfloor \leq x \leq j_1 (3k) + j_1,\\
j_2 (3k) - \left\lfloor \frac{j_2}{2} \right\rfloor \leq y \leq j_2 (3k) + j_2,
\end{align*} 
for some $\{j_1, j_2 \} \subseteq \N$.

We notice that 
\begin{equation*}
\left\lfloor \frac{j_1}{2} \right\rfloor + \left\lfloor \frac{j_2}{2} \right\rfloor \leq \left\lfloor \frac{j_1+j_2}{2}   \right\rfloor.
\end{equation*}

Therefore
\begin{equation*}
(j_1 + j_2) (3k) - \left\lfloor \frac{j_1+j_2}{2} \right\rfloor \leq x + y \leq (j_1+j_2) (3k) + (j_1+j_2),
\end{equation*}
namely
\begin{equation*}
x + y \in A_{\lfloor \frac{j_1+j_2}{2} \rfloor, k} \cup B_{\lfloor \frac{j_1+j_2}{2} \rfloor, k}.
\end{equation*}

\item The assertion follows since 
\begin{equation*}
(2i+1) 3k - i - [(2i) 3k + 2i] = 3k - 3i \geq 3
\end{equation*}
for any $i \in [0,k-1]$.
\item The assertion follows since 
\begin{equation*}
[2(i+1) 3k - (i+1)] - [(2i+1) 3k + 2i+1] = 3k - 3i - 2 \geq 1
\end{equation*}
for any $i \in [0,k-1]$.

\item Let $b:=(2(k-1)+1) 3k - (k-1) \in B_{k-1,k}$. 

If $x \in [b, \infty[$, then there exists a set $\{q, r\} \subseteq \N$ such that
\begin{displaymath}
\begin{cases}
x - b = 3 k q + r \\
0 \leq r < 3k,
\end{cases}
\end{displaymath}
namely
\begin{displaymath}
\begin{cases}
x = b + 3 k q + r \\
0 \leq r < 3k.
\end{cases}
\end{displaymath}
Since $3k \in H_{1,k}$ and  
\begin{equation*}
b + r \in B_{k-1,k} \cup \{2k (3k) - k \} \subseteq B_{k-1, k} \cup A_{k,k},
\end{equation*}
we conclude that $x \in H_{1,k}$.

\item Let $x \in H_{1,k}$. We show that $x \in G$ dealing with two cases.
\begin{itemize}
\item If $x \in [j (3k), j (3k) + j]$, where $j \in \{ 2i, 2i+i \}$ for some $i \in \N$, then $x \in \langle \{3k, 3k+1 \} \rangle$ in accordance with Lemma  \ref{lem_arith_seq}.
\item If $x \in \left[j (3k)  - \lfloor \frac{j}{2} \rfloor, j (3k) - 1 \right]$ for some $j \in \N^*$, then 
\begin{equation*}
x = j (3k) - t
\end{equation*}
for some $t \in \left[ 1, \lfloor \frac{j}{2} \rfloor \right]$. Therefore
\begin{equation*}
x =  (j-2t) 3k + t (6k-1).
\end{equation*}
\end{itemize}

From (1) and (4) we deduce that $H_{1,k}$ is a co-finite submonoid of $(\N, +,0)$, namely $H_{1,k}$ is a numerical semigroup.  

Since
\begin{equation*}
S \subseteq H_{1,k} \subseteq G, 
\end{equation*}
we conclude that $G = H_{1,k}$.

\item For any $i \in \N$ we have that
\begin{align*}
|A_{i,k}| & = 3 i + 1,\\
|B_{i,k}| & = 3 i +2.
\end{align*}

Now let $i \in [0,k-1]$. 
From (3) we deduce that 
\begin{equation*}
|B_{i,k} \cup A_{i+1,k}| = 6 i + 6,
\end{equation*}
namely $3$ divides $|B_{i,k} \cup A_{i+1,k}|$. 

The sequence formed by the two greatest elements in $B_{i,k}$ and by the smallest element of $A_{i+1,k}$ reads as follows (modulo $3$): 
\begin{align*}
(\overline{2i}, \overline{2i+1}, \overline{2i+2}).
\end{align*}
Therefore the elements of $\overline{g \cap (B_{i,k} \cup A_{i+1,k})}$ are obtained via a concatenation of $3$-permutations.

Finally we notice that 
\begin{equation*}
A_{k,k} \subseteq [(2(k-1)+1) 3k - (k-1), \infty[ \subseteq H_{1,k}.
\end{equation*}
Hence we conclude that $H_{1,k}$ is a $3$-permutation semigroup.
\end{enumerate}

\end{proof}

\begin{lemma}\label{family_2}
Let $S := \{6 k + 1, 6 k + 2, 9 k + 3 \}$ for some positive integer $k$ and 
\begin{equation*}
H_{2,k} := \{ 0 \} \cup ( \cup_{i \in \N^*} (A_{i,k} \cup B_{i,k})), 
\end{equation*}
where
\begin{align*}
A_{i,k} & := [(6k+1)i, (6k+1)i+i],\\
B_{i,k} & := [(6k+1)i + (3k+2), (6k+1)i + (3k+2) + i-1],
\end{align*}
for any $i \in \N^*$.

The following hold.
\begin{enumerate}
\item $H_{2,k}$ is a submonoid of $(\N, +,0)$ containing $S$.
\item $A_{i,k} < B_{i,k}$ for any $i \in [1, 3k+1]$.
\item $B_{i,k} \leq A_{i+1,k}$ for any $i \in [1,3k]$.
\item $[(6k+1) (3k) + (3k+2), \infty[ \subseteq H_{2,k}$.
\item $G = H_{2,k}$.
\item $H_{2,k}$ is a $3$-permutation semigroup.
\end{enumerate}
\end{lemma}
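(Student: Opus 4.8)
The plan is to follow the template established in the proof of Lemma \ref{family_1}, so that parts (1)--(5) are essentially mechanical, while the combinatorial content concentrates in part (6).

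For part (1), I would first locate the generators: $\{6k+1, 6k+2\} = A_{1,k}$, while $9k+3 = (6k+1) + (3k+2)$ is the single element of $B_{1,k}$, so $S \subseteq H_{2,k}$. For closure I would record the uniform description
\begin{equation*}
x = (6k+1) i + \delta (3k+2) + r, \qquad \delta \in \{0,1\}, \quad 0 \le r \le i - \delta,
\end{equation*}
valid for every nonzero $x \in A_{i,k} \cup B_{i,k}$ ($\delta = 0$ gives $A_{i,k}$, $\delta = 1$ gives $B_{i,k}$). Adding two such elements and splitting on the value of $\delta_1 + \delta_2 \in \{0,1,2\}$ settles closure: the cases $\delta_1 + \delta_2 \in \{0,1\}$ land directly in $A_{i_1+i_2,k}$ or $B_{i_1+i_2,k}$, and the case $\delta_1 + \delta_2 = 2$ is absorbed into $A_{i_1+i_2+1,k}$ via the identity $2(3k+2) = (6k+1) + 3$. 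This identity -- two copies of the offset $3k+2$ reconstituting one copy of $6k+1$ -- is the only nonformal point of part (1).

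Parts (2) and (3) are direct comparisons of endpoints: $\max A_{i,k} < \min B_{i,k}$ reduces to $i < 3k+2$, and $\max B_{i,k} \le \min A_{i+1,k}$ reduces to $i \le 3k$, the case $i = 3k$ producing the equality $\max B_{3k,k} = \min A_{3k+1,k}$ that forces the weak inequality. For part (4) I would set $b := \min B_{3k,k} = (6k+1)(3k) + (3k+2)$ and use precisely this equality to note that $B_{3k,k} \cup A_{3k+1,k} = [b, b + 6k]$ is a block of $6k+1$ consecutive integers contained in $H_{2,k}$; since $6k+1 \in H_{2,k}$, writing any $x \ge b$ as $x = (b + r) + (6k+1) q$ with $0 \le r \le 6k$ and invoking (1) gives $x \in H_{2,k}$. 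Part (5) then follows exactly as in Lemma \ref{family_1}: every element of $A_{i,k}$ lies in $\langle 6k+1, 6k+2 \rangle$ by Lemma \ref{lem_arith_seq}, every element of $B_{i,k}$ is such an element plus $9k+3$, so $H_{2,k} \subseteq G$; combined with $S \subseteq H_{2,k}$ from (1), cofiniteness from (4), and $G = \langle S \rangle$, this yields $G = H_{2,k}$.

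The heart of the argument is part (6), where the situation genuinely differs from Lemma \ref{family_1}: since $|A_{i,k}| = i+1$ and $|B_{i,k}| = i$ are not individually divisible by $3$, the $3$-permutation blocks no longer coincide with the sets $A_{i,k}, B_{i,k}$, and I would instead argue globally on the position index. By (2)--(4) the positive elements of $G$ list as the consecutive runs $A_{1,k}, B_{1,k}, A_{2,k}, B_{2,k}, \dots, B_{3k-1,k}, A_{3k,k}$ followed by the ray $[b, \infty[$; a direct summation gives that the number of elements preceding the ray equals $9k^2 + 3k = 3k(3k+1)$, a multiple of $3$, while the ray consists of consecutive integers whose reductions modulo $3$ trivially group into $3$-permutations. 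It therefore suffices to show that the aligned triples $(\overline{g_{3m+1}}, \overline{g_{3m+2}}, \overline{g_{3m+3}})$ are permutations throughout the pre-ray part. Since any three consecutive integers realize all residues modulo $3$, the only triples that could fail are those straddling a gap of $g$; recording each gap as a residue jump, I would check that the gap between $A_{i,k}$ and $B_{i,k}$ (jump $\equiv 2 - i$) and the gap between $B_{i,k}$ and $A_{i+1,k}$ (jump $\equiv -i$) either fall at a position divisible by $3$, i.e. at a triple boundary where they are harmless, or have jump $\equiv 1 \pmod 3$, which preserves the three-consecutive-residues pattern inside the triple. The expected obstacle, and the computation I would present in full, is exactly this bookkeeping: the last element of $A_{i,k}$ sits at position $i(i+1)$ and that of $B_{i,k}$ at position $i(i+2)$, and one verifies that the internal (non-boundary) cases are precisely $i \equiv 1 \pmod 3$ for the first gap and $i \equiv 2 \pmod 3$ for the second, which are exactly the residues for which the corresponding jump is $\equiv 1 \pmod 3$. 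Once this alignment is established, every triple is a $3$-permutation and the claim follows.
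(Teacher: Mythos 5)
Your proof is correct. Parts (1)--(5) coincide with the paper's argument: the same case split on $\delta_1+\delta_2$ (the paper's $\epsilon_1+\epsilon_2$) with the identity $2(3k+2)=(6k+1)+3$ absorbing the double-offset case into $A_{i_1+i_2+1,k}$, the same endpoint comparisons for (2)--(3), the same consecutive-block argument for (4), and the same reduction of (5) to Lemma \ref{lem_arith_seq} together with $B_{i,k}\subseteq \langle 6k+1,6k+2\rangle + \{9k+3\}$. Part (6) is where you take a genuinely different route. The paper groups the blocks into chunks $A_{i_1,k}\cup B_{i_1,k}$, $A_{i_2,k}$, $B_{i_2,k}\cup A_{i_3,k}$, $B_{i_3,k}$ (with $i_j\equiv j \pmod{3}$), each of cardinality divisible by $3$, and checks the single nontrivial residue transition inside each chunk. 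You instead work with absolute positions: the last element of $A_{i,k}$ sits at index $i(i+1)$, that of $B_{i,k}$ at $i(i+2)$, the pre-ray part has length $3k(3k+1)$, and every gap either falls at an index divisible by $3$ or carries a residue jump $\equiv 1 \pmod{3}$. Your computations all check out, including the jump values $2-i$ and $-i$ and the classification of the internal gaps as $i\equiv 1$ and $i\equiv 2 \pmod{3}$ respectively. The jump criterion treats all transitions uniformly and makes explicit why the hand-off to the ray $[b,\infty[$ happens at a triple boundary, a point the paper leaves implicit; the paper's chunking, on the other hand, avoids computing absolute positions and transfers more directly to the other families in Section \ref{sec_3_pns}, where the blocks are not intervals of consecutive integers and the position bookkeeping would be heavier.
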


\begin{proof}
\begin{enumerate}[leftmargin=*]
\item Since $\{6k + 1, 6k + 2 \} = A_{1,k}$ and $9k + 3 \in B_{1,k}$, we have that $S \subseteq H_{2,k}$.

If $\{x, y \} \subseteq H_{2,k}$, then 
\begin{align*}
(6k+1) i_1 + \epsilon_1 (3k+2)  \leq x \leq (6k+1) i_1 + \epsilon_1 (3k+1) + i_1,\\
(6k+1) i_2 + \epsilon_2 (3k+2) \leq y \leq (6k+1) i_2 + \epsilon_2 (3k+1) + i_2,
\end{align*}
for some $\{i_1, i_2 \} \subseteq \N^*$ and $\{\epsilon_1, \epsilon_2 \} \subseteq \{0, 1 \}$.

If $\epsilon_1 = \epsilon_2 = 0$, then $x+y \in A_{i_1+i_2,k}$.

If $\epsilon_1 = \epsilon_2 = 1$, then $x+y \in A_{i_1+i_2+1,k}$ because
\begin{align*}
x + y & \geq (6k+1) (i_1+i_2)  + 2 (3k+2) \\
& = (6 k + 1) (i_1 + i_2 + 1) + 3
\end{align*}
and
\begin{align*}
x + y & \leq (6k+1) (i_1+i_2) + 2 (3k+1) + (i_1+i_2)\\
& = (6k+1) (i_1+i_2+1) + (i_1+i_2+1). 
\end{align*}

If $\epsilon_1 \not = \epsilon_2$, then $x + y \in B_{i_2+i_2,k}$.

\item The assertion follows since 
\begin{equation*}
(6k+1)i + (3k+2) - [(6k+1)i+i] = 3k+2-i \geq 1
\end{equation*}
for any $i \in [1,3k+1]$.

\item The assertion follows since 
\begin{equation*}
(6k+1) (i+1) - [(6k+1)i+ (3k+2) + i - 1] = 3k - i \geq 0
\end{equation*}
for any $i \in [1,3k]$.

\item The proof is as in Lemma \ref{family_1} (4).

\item Let $x \in H_{2,k} \backslash \{ 0 \}$. We show that $x \in G$ dealing with two cases.
\begin{itemize}
\item If $x \in A_{i,k}$ for some $i \in \N^*$, then $x \in \langle \{6k+1, 6k+2 \} \rangle$ in accordance with Lemma \ref{lem_arith_seq}. Hence $x \in G$.
\item If $x \in B_{i,k}$ for some $i \in \N^*$, then 
\begin{align*}
x & = (6k+1) i + (3k+2) + j\\
& = (6k+1) (i-1) + j + (9k+3)
\end{align*}
for some $j \in [0,i-1]$. 

Since 
\begin{equation*}
(6k+1) (i-1) + j \in \langle \{6k+1, 6k+2 \} \rangle, 
\end{equation*}
we conclude that $x \in G$. 
\end{itemize}
Therefore $G = H_{2,k}$ as explained in Lemma \ref{family_1}.

\item We notice that
\begin{equation*}
H_{2,k} = \left( \cup_{i=1}^{3k} (A_{i,k} \cup B_{i,k}) \right) \cup [(6k+1) (3k) + (3k+2), \infty[
\end{equation*}
and $B_{3k,k} \subseteq [(6k+1) (3k) + (3k+2), \infty[$.

For any $i \in \N^*$ we have that
\begin{align*}
|A_{i,k}| & = i+1,\\
|B_{i,k}| & = i.
\end{align*}
Moreover, if $i_1, i_2$ and $i_3$ are three consecutive positive integers such that 
\begin{equation*}
i_1 < i_2 < i_3 \leq 3k
\end{equation*}
where $i_j \equiv j \Mod{3}$ for any $j \in \{1, 2, 3 \}$, then
\begin{equation*}
\sum_{j=1}^3 |A_{i_j, k} \cup B_{i_j,k}| \equiv 2(i_1+i_2+i_3) \equiv 0 \Mod{3},
\end{equation*}
namely $|\cup_{j=1}^3 (A_{i_j, k} \cup B_{i_j,k})|$ is divisible by $3$. 

The sequence formed by the two greatest elements of $A_{i_1}$ and the smallest element of $B_{i_1}$ reads as follows (modulo $3$):
\begin{equation*}
(\overline{1}, \overline{2}, \overline{0}).
\end{equation*}
Since $|A_{i_1,k} \cup B_{i_1,k}| \equiv 0 \Mod{3}$, the elements of $\overline{ g \cap (A_{i_1,k} \cup B_{i_1,k}) }$ are obtained via a concatenation of $3$-permutations.

We notice that $|A_{i_2,k}| \equiv 0 \Mod{3}$.

The sequence formed by the two greatest elements of $B_{i_2}$ and the smallest element of $A_{i_3}$ reads as follows (modulo $3$):
\begin{equation*}
(\overline{1}, \overline{2}, \overline{0}).
\end{equation*}
Since $|B_{i_2,k} \cup A_{i_3,k}| \equiv 0 \Mod{3}$, the elements of $\overline{ g \cap (B_{i_2,k} \cup A_{i_3,k}) }$ are obtained via a concatenation of $3$-permutations.

Hence we can say that the elements of $\overline{g \cap (\cup_{j=1}^3 (A_{i_j, k} \cup B_{i_j,k}))}$ are obtained by means of a concatenation of $3$-permutations and the same holds for $\overline{g \cap H_{2,k}}$.  
\end{enumerate}
\end{proof}

\begin{lemma}\label{family_3}
Let $S := \{a, b, c\}$, where $a:=6k+1$, $b:=9k+2$ and $c:=9k+3$ for some positive integer $k$, and
\begin{equation*}
H_{3,k} :=  (S + \{0, a \})  \cup (\cup_{i \in \N^*} T_{i,k}), 
\end{equation*} 
where
\begin{equation*}
T_{i,k} :=  A_{i,k} \cup B_{i,k} \cup C_{i,k} \cup D_{i,k} \cup E_{i,k} \cup F_{i,k} \cup G_{i,k} \cup I_{i,k} \cup J_{i,k}
\end{equation*}
with
\begin{align*}
A_{i,k} & := \{(3i) a \},\\
B_{i,k} & := [(3i) a  + 1, (3i) a + 3i],\\
C_{i,k} & := [(3i) a + 3k+1, (3i) a + 3k+2 + 3 (i-1)],\\
D_{i,k} & := A_{i,k} + \{ a \},\\
E_{i,k} & := B_{i,k} + \{ a \},\\
F_{i,k} & := [(3i) a + b, (3i) a + c + 3 i],\\ 
G_{i,k} & := D_{i,k} + \{ a \},\\
I_{i,k} & := E_{i,k} + \{ a \},\\
J_{i,k} & := F_{i,k} + \{ a \},
\end{align*}
for any $i \in \N^*$.

The following hold.
\begin{enumerate}
\item $H_{3,k}$ is a submonoid of $(\N, +,0)$ containing $S$.
\item For any $i \in [1,k-1]$ we have that
\begin{align*}
& A_{i,k} < B_{i,k} < C_{i,k} < D_{i,k} < E_{i,k} < F_{i,k} < G_{i,k} < I_{i,k} < J_{i,k},\\
& J_{i,k} < A_{i+1,k},
\end{align*}
and $A_{k,k} < B_{k,k} < C_{k,k} < D_{k,k}$.
\item $[(3k+1)a, \infty[ \subseteq H_{3,k}$.
\item $G = H_{3,k}$.
\item $H_{3,k}$ is a $3$-permutation semigroup.
\end{enumerate}
\end{lemma}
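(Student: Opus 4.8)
The plan is to establish the five assertions in sequence, mirroring the scaffold already used in Lemmas \ref{family_1} and \ref{family_2}, while exploiting three elementary identities that make the whole family transparent: $b = a + (3k+1)$, $c = a + (3k+2)$, and $a = 6k+1 = 2(3k+1) - 1 \equiv 1 \Mod{3}$. The first two let me pass freely between the generating set $\{a,b,c\}$ and the single modulus $a$; the congruence $a \equiv 1 \Mod{3}$ is what ultimately drives the residue count in (5). Logically, (1) supplies $G \subseteq H_{3,k}$, (4) supplies the reverse inclusion, and (2), (3) furnish the ordering and cofiniteness data that (5) consumes.

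Assertions (2) and (3) are pure arithmetic. For (2) I would subtract the endpoints of consecutive blocks and check that each difference is bounded below by a positive constant exactly in the stated index range; the interval $[1,k-1]$ and the truncation $A_{k,k} < B_{k,k} < C_{k,k} < D_{k,k}$ are precisely the places where some gap collapses. For (3) I would reuse the device of Lemma \ref{family_1}(4): given $x \ge (3k+1)a$, write $x = (3k+1)a + a q + r$ with $0 \le r < a$; since a direct computation shows that $D_{k,k} \cup E_{k,k} \cup F_{k,k}$ already exhausts the window $[(3k+1)a,\,(3k+1)a + a - 1]$, and since $a \in H_{3,k}$ with $H_{3,k}$ closed by (1), one gets $x \in H_{3,k}$.

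For (1) and (4) the cleanest engine is the closed form
\begin{equation*}
G = \{\, n a + m(3k+1) + \gamma : 0 \le \gamma \le m \le n \,\},
\end{equation*}
obtained from a representation $x = \alpha a + \beta b + \gamma c$ by setting $n = \alpha + \beta + \gamma$ and $m = \beta + \gamma$. Closure of this set under addition is immediate (add the triples $(n,m,\gamma)$ componentwise), so it yields at once that $G$ is a submonoid containing $S$, which is (1) via $G \subseteq H_{3,k}$; it then remains, for (4), to prove $H_{3,k} \subseteq G$ by matching each block $A_{i,k}, \dots, J_{i,k}$ to an explicit range of triples $(n,m,\gamma)$. \textbf{This matching is the most tedious step}: because the map $(n,m,\gamma) \mapsto n a + m(3k+1) + \gamma$ is highly non-injective, representatives must be chosen carefully, and one must use the ordering from (2) to be sure the blocks \emph{exhaust} $G$ rather than merely sit inside it.

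Assertion (5) is the conceptual heart. I would group the nine blocks of each period into three \emph{super-groups} $A_{i,k} \cup B_{i,k} \cup C_{i,k}$, $D_{i,k} \cup E_{i,k} \cup F_{i,k}$, and $G_{i,k} \cup I_{i,k} \cup J_{i,k}$, whose cardinalities $6i$, $6i+3$, $6i+3$ are each divisible by $3$, and record that the initial block $S + \{0,a\}$ has exactly six elements. The key computation is that within each super-group the residues of consecutive elements of $g$ increase by $1 \Mod{3}$ across \emph{every} internal gap — this is exactly what the offset $3k+1 \equiv 1$ (for the $B \to C$ transition), the offset $b \equiv 2$ (for $E \to F$), and the shifts by $a \equiv 1 \Mod{3}$ arrange — so each super-group, having length divisible by $3$ and strictly incrementing residues, reads as a concatenation of $3$-permutations. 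Since every super-group, the six-element initial block (checked by hand to give $(\bar1,\bar2,\bar0)(\bar2,\bar0,\bar1)$), and the final interval $[(3k+1)a, \infty[$ all have length divisible by $3$, the block boundaries align with the global grid of triples, and the tail increments trivially. \textbf{The main obstacle here is bookkeeping}: one must verify the increment-across-gaps property at each internal transition and confirm that the truncated last period $i = k$ (only $A_{k,k}, B_{k,k}, C_{k,k}$ preceding the tail) still aligns, so that every grid-aligned triple is a genuine permutation of $\Z/3\Z$.
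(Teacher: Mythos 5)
Your handling of (2), (3) and (5) coincides with the paper's own proof: the same endpoint subtractions for the orderings, the same observation that $D_{k,k}\cup E_{k,k}\cup F_{k,k}$ fills a window of $6k+3\ge a$ consecutive integers so that the tail is reached by adding multiples of $a$, and the same residue bookkeeping in (5) --- the paper checks the three-element windows at the junctions $A_{i,k}\to B_{i,k}$, $B_{i,k}\to C_{i,k}$, etc., which is exactly your ``residues increment by one across every internal gap'' computation, and it uses the same cardinality counts $6i$ and $6i+3$. Where you genuinely diverge is in (1) and (4). The paper proves (1) \emph{directly}: it tabulates, for every pair of block types, which block the sum lands in (a $9\times 9$ table), so closure of $H_{3,k}$ and hence $G=\langle S\rangle\subseteq H_{3,k}$ come with no exhaustion argument; (4) then only needs the inclusion $H_{3,k}\subseteq G$, done block by block via Lemma \ref{lem_arith_seq} applied to $\langle b,c\rangle$ (e.g.\ $B_{i,k}=[(2i)b-i+1,(2i)b+2i]$ and $(2i)b-j=2(i-j)b+(3j)a$). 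You instead parametrize $G=\{na+m(3k+1)+\gamma : 0\le\gamma\le m\le n\}$ --- which is correct, and is in fact the method the paper itself uses for $H_{5,k}$, $H_{9,k}$ and $H_{10,k}$ --- and propose to prove both inclusions against that parametrization.

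That alternative is viable, but one point in your outline is stated backwards and hides the real work. Componentwise addition of triples shows only that $G$ is a monoid, which is true by definition of $\langle S\rangle$; assertion (1) is that the explicitly described set $H_{3,k}$ is closed under addition, and in your framework that becomes available only \emph{after} the equality $H_{3,k}=G$, i.e.\ after the exhaustion step you defer to (4). So $G\subseteq H_{3,k}$ does not follow ``at once'' --- it \emph{is} the exhaustion, and it is the one substantial piece of your plan that is named but not executed: given $na+m(3k+1)+\gamma$ one must reduce $m$ modulo $2$ using $2(3k+1)=a+1$ and then verify that the resulting offset from $(3i+j)a$, $j\in\{0,1,2\}$, lands in $[0,3i]$ or in the $C/F/J$ range $[3k+1,\,\cdot\,]$, with the tail absorbing everything beyond $i=k$. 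The paper's addition table buys precisely this step for free, at the price of a page of routine block-sum verifications; your route concentrates all the difficulty into that single case analysis.
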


\begin{proof}
\begin{enumerate}[leftmargin=*]
\item By definition of $H_{3,k}$ we have that $S \subseteq H_{3,k}$.

Let $\{i_1, i_2 \} \subseteq \N^*$ and $i_3 := i_1 + i_2$.

If $x \in T_{i_1,k}$ and $y \in T_{i_2,k}$, then $x+y$ belongs to one of the rows 2-10, columns 2-10 of the following table. 

\begin{displaymath}
\begin{array}{|r|c|c|c|c|c|c|c|c|c|}
\hline
& A_{i_2,k} & B_{i_2,k} & C_{i_2,k} & D_{i_2,k} & E_{i_2,k} & F_{i_2,k} & G_{i_2,k} & I_{i_2,k} & J_{i_2,k}\\
\hline
A_{i_1,k} & A_{i_3,k} & B_{i_3,k} & C_{i_3,k} & D_{i_3,k} & E_{i_3,k} & F_{i_3,k} & G_{i_3,k} & I_{i_3,k} & J_{i_3,k}\\
\hline
B_{i_1,k} & & B_{i_3,k} & C_{i_3,k} & E_{i_3,k} & E_{i_3,k} & F_{i_3, k} & I_{i_3,k} & I_{i_3,k} & J_{i_3,k} \\
\hline
C_{i_1,k} & & & E_{i_3,k} & F_{i_3,k} & F_{i_3,k} & I_{i_3,k } & J_{i_3,k} & J_{i_3,k} & B_{i_3+1,k} \\
\hline
D_{i_1,k} & & & & G_{i_3,k} & I_{i_3,k} & J_{i_3,k} & A_{i_3+1,k}  & B_{i_3+1,k} & C_{i_3+1,k} \\
\hline
E_{i_1,k} & & & & & I_{i_3,k} & J_{i_3,k} & B_{i_3+1,k} & B_{i_3+1,k} & C_{i_3+1,k} \\
\hline
F_{i_1,k} & & & & & & B_{i_3+1,k} & C_{i_3+1,k} & C_{i_3+1,k} & E_{i_3+1,k} \\
\hline
G_{i_1,k} & & & & & & & D_{i_3+1,k} & E_{i_3+1,k} & F_{i_3+1,k} \\
\hline
I_{i_1,k} & & & & & & & & E_{i_3+1,k} & F_{i_3+1,k}  \\
\hline
J_{i_1,k} & & & & & & & & & I_{i_3+1,k}  \\
\hline
\end{array}
\end{displaymath}

\item All inequalities follow from the definition of the sets.

\item The assertion holds because
\begin{equation*}
D_{k,k} \cup E_{k,k} \cup F_{k,k} =[(3k) a + a, (3k) a+12k+3]
\end{equation*}
contains $6k+3$ consecutive integers. 

Therefore, if $x \in [(3k+1)a, \infty[$, then 
\begin{equation*}
x = y + ha
\end{equation*}
for some $y \in D_{k,k} \cup E_{k,k} \cup F_{k,k}$ and $h \in \N$. 

\item Let $x \in H_{3,k}$. 
\begin{itemize}
\item If $x \in (S+\{0, a \}) \cup A_{i,k} \cup D_{i,k} \cup G_{i,k}$ for some $i \in \N^*$, then we get immediately that $x \in G$.
\item Let $x \in B_{i,k}$ for some $i \in \N^*$. We notice that
\begin{equation*}
B_{i,k} = [(2i) b - i + 1, (2i) b + 2i].
\end{equation*}

If $x \in [(2i) b, (2i) b + 2i]$, then $x \in \langle \{b, c \} \rangle$ in accordance with Lemma \ref{lem_arith_seq}.

If $x = (2i) b - j$ for some $j$ with $1 \leq j \leq i-1$, then $x \in G$ because
\begin{equation*}
x = (2 i) b - j = 2 (i-j) b + (3 j) a. 
\end{equation*}

\item Let $x \in C_{i,k}$ for some $i \in \N^*$. We notice that $C_{i,k} = [t,u]$, where
\begin{align*}
t & := (2i-1) b + 2 a - (i-1),\\
u & := (2i-1) b + 2 a + (2i-1).
\end{align*}

If $x \in [(2i-1) b + 2 a, u ]$, then $x \in \langle \{ b, c \} \rangle + \{ 2a \}$ according to Lemma \ref{lem_arith_seq}. 

If $x = (2i-1) b + 2 a - j$, where $1 \leq j \leq i-1$, then $x \in G$ because 
\begin{equation*}
x = (2i-2j-1) b + 2 a + (3 j) a.
\end{equation*}

\item Let $x \in F_{i,k}$ for some $i \in \N^*$. We notice that
\begin{equation*}
F_{i,k} = [(2i+1)b - i, (2i+1)b+(2i+1)].
\end{equation*}

If $x \in [(2i+1) b, (2i+1)b+(2i+1)]$, then $x \in \langle \{ b, c \} \rangle $ according to Lemma \ref{lem_arith_seq}. 

If $x = (2i+1) b - j$, where $1 \leq j \leq i$, then $x \in G$ because 
\begin{equation*}
x = (2i-2j+1) b + (3 j) a.
\end{equation*}

\item If $x \in E_{i,k}  \cup I_{i,k} \cup J_{i,k}$, then $x \in G$ by the definition of the sets.
\end{itemize}

Hence we conclude that $H_{3,k} = G$.

\item Let $i \in [1,k]$. Then 
\begin{align*}
|A_{i,k}| & \equiv 1 \Mod{3},\\
|B_{i,k}| & \equiv 0 \Mod{3},\\
|C_{i,k}| & \equiv 2 \Mod{3},\\
|A_{i,k} \cup B_{i,k} \cup C_{i,k}| & \equiv 0 \Mod{3}.
\end{align*}

The sequence formed by the element of $A_{i,k}$ and the two smallest elements of $B_{i,k}$ reads (modulo $3$) as $(\overline{0},\overline{1},\overline{2})$.

The sequence formed by the greatest element of $B_{i,k}$ and the two smallest elements of $C_{i,k}$ reads (modulo $3$) as $(\overline{0},\overline{1},\overline{2})$.

The remaining elements of $\overline{g \cap C_{i,k}}$ are obtained through a concatenation of $3$-permutations. 

Therefore we conclude that the elements of $\overline{g \cap (A_{i,k} \cup B_{i,k} \cup C_{i,k})}$ can be written as a concatenation of $3$-permutations.

Since for any $i \in [1,k-1]$ we have that 
\begin{align*}
|D_{i,k} \cup E_{i,k} \cup F_{i,k}| = |G_{i,k} \cup I_{i,k} \cup J_{i,k}| & = 3 + 6i, 
\end{align*}
using a similar argument we can prove that the elements of 
\begin{equation*}
\overline{g \cap (D_{i,k} \cup E_{i,k} \cup F_{i,k} \cup G_{i,k} \cup I_{i,k} \cup J_{i,k})}
\end{equation*}
can be obtained via a concatenation of $3$-permutations.

Hence $H_{3,k}$ is a $3$-permutation semigroup.
\end{enumerate}
\end{proof}

\begin{lemma}\label{family_4}
Let $S := \{a, b, c \}$, where 
\begin{equation*}
a := 6k+1, \quad b:= a + 2, \quad c := a +4,
\end{equation*}
for some positive integer $k$, and
\begin{displaymath}
t  := \left\lfloor \frac{3}{2} k \right\rfloor, \quad 
\epsilon  := 
\begin{cases}
0 & \text{if $k$ is even},\\
1 & \text{if $k$ is odd}.
\end{cases}
\end{displaymath}
Let 
\begin{equation*}
H_{4,k} := \left( \cup_{i=1}^{t} A_{i,k} \right) \cup \left( \cup_{i=t+1}^{2t+\epsilon} (B_{i,k} \cup C_{i,k}) \right) \cup [(2t+\epsilon+1)a, \infty[,
\end{equation*}
where
\begin{align*}
A_{i,k} & := [ia, ia + 4i]_2
\end{align*}
if $i \in [1,t]$, while
\begin{align*}
B_{i,k} & := [ia, ia + 2 (2 (i-t-1) - \epsilon)], \\
C_{i,k} & := [ia + 2 (2 (i-t-1) - \epsilon +1), ia + 2 (2t + \epsilon)]_2,
\end{align*}
if $i \in [t+1,2t+ \epsilon]$.

The following hold.
\begin{enumerate}
\item $H_{4,k}$ is a submonoid of $(\N, +, 0)$ containing $S$.
\item The following inequalities hold.
\begin{itemize}
\item $A_{i,k} < A_{i+1,k}$ for any $i \in[1,t-1]$.
\item $A_{t,k} < B_{t+1,k}$ if $k$ is even, while $A_{t,k} < C_{t+1,k}$ and $B_{t+1,k} = \emptyset$ if $k$ is odd.
\item $B_{t+1,k} < C_{t+1,k}$ if $k$ is even.
\item $B_{i,k} < C_{i,k}$ if $i \in [t+2, 2t+ \epsilon]$.
\item $C_{i,k} < B_{i+1,k}$ if $i \in [t+1, 2t+ \epsilon-1]$.
\item $C_{2t+\epsilon,k} < [(2t+\epsilon+1)a, \infty[$.
\end{itemize}
\item $G = H_{4,k}$.
\item $H_{4,k}$ is a $3$-permutation semigroup.
\end{enumerate}
\end{lemma}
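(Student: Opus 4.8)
The plan is to lean on Lemma~\ref{lem_arith_seq}: since $S = \{a, a+2, a+4\}$ is an arithmetic progression with first term $a_0 = 6k+1$, common difference $e = 2$ and two steps, an integer $x$ lies in $G$ if and only if $x = (6k+1)q + 2r$ with $0 \le r \le 2q$. Equivalently, $G \setminus \{0\} = \bigcup_{q \ge 1} [qa, qa+4q]_2$, the union over all levels $q$ of step-$2$ blocks of length $2q+1$. I would prove parts (1) and (3) simultaneously by establishing the set identity $H_{4,k} = G$, from which (1) is immediate: $G$ is a submonoid, and $A_{1,k} = [a, a+4]_2 = \{a,b,c\} = S \subseteq G$.

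For the inclusion $H_{4,k} \subseteq G$, each block $A_{i,k} = [ia, ia+4i]_2$ is exactly the level-$i$ block, hence lies in $G$ by Lemma~\ref{lem_arith_seq} with $q=i$; each $C_{i,k}$ is a step-$2$ sub-block of the same parity as $ia$ whose top offset $2(2t+\epsilon)$ is at most $4i$ for $i \ge t+1$, so it too sits inside level $i$; the only real work is the full intervals $B_{i,k}$, whose wrong-parity entries must be produced from a neighbouring level $j \equiv i+1 \pmod 2$ by writing such an element explicitly as $(6k+1)j + 2r$. For the reverse inclusion $G \subseteq H_{4,k}$, elements $\ge (2t+\epsilon+1)a = (3k+1)a$ fall in the cofinite tail, and for $x = (6k+1)q + 2r$ below that bound I would locate $x$ in the appropriate $A$-, $B$- or $C$-block. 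Part (2) is then pure interval arithmetic: I would read off the least and greatest element of each set and subtract, the governing fact being that $A_{i,k} < A_{i+1,k}$ amounts to $ia+4i < (i+1)a$, i.e. $4i < a$, which holds exactly for $i \le t$; this threshold is precisely why the separated $A$-blocks describe $G$ up to level $t$ and give way to the merged $B$/$C$-description beyond it. The only subtlety is the case split on the parity of $k$ carried by $\epsilon$ and the floor in $t = \lfloor 3k/2 \rfloor$.

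For part (4) I would follow the template of Lemmas~\ref{family_1}--\ref{family_3}. Since $a \equiv 1 \pmod 3$ and $\gcd(2,3)=1$, the residues of a step-$2$ block $ia, ia+2, \dots$ cycle through all of $\Z/3\Z$ with period $3$, as do those of a step-$1$ block $ia, ia+1, \dots$, while the tail, being an interval, is trivially a concatenation of $3$-permutations. I would tabulate $|A_{i,k}|, |B_{i,k}|, |C_{i,k}|$, show that the relevant unions have cardinality divisible by $3$, and check that the mod-$3$ triple straddling each seam between consecutive blocks is itself a permutation of $\{\bar 0, \bar 1, \bar 2\}$, so that all of $\overline{g}$ is a concatenation of $3$-permutations.

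The main obstacle I anticipate is the transition-region bookkeeping: proving that $B_{i,k}$ and $C_{i,k}$ exactly repartition the overlapping level-blocks of $G$ in the window $[ta, (3k+1)a[$ into a full interval followed by a step-$2$ tail. Carrying the two parity cases $\epsilon = 0,1$ and the floor through these offset computations without an off-by-one slip—both for the set identity behind (1)/(3) and for the seam and cardinality checks in (4)—is where the care lies. If one instead proves (1) directly in the style of Lemma~\ref{family_3}, the closure under addition reduces to a finite table of set-sums that is routine once the inequalities of part (2) are in hand.
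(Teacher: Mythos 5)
Your proposal is correct and rests on the same engine as the paper's proof --- Lemma~\ref{lem_arith_seq} applied to the arithmetic progression $\{a,a+2,a+4\}$, giving $G\setminus\{0\}=\cup_{q\ge 1}[qa,qa+4q]_2$, followed by block bookkeeping for parts (2) and (4) --- but you organize parts (1) and (3) differently. The paper proves (1) first, by a direct case analysis on sums $x+y$ with $x=i_1a+2h_1$, $y=i_2a+2h_2$ (splitting on whether $i_1+i_2$ and $h_1+h_2$ exceed $t$ and $2t+\epsilon$), and then in (3) only proves the inclusion $H_{4,k}\subseteq G$, obtaining the reverse inclusion from the fact that $H_{4,k}$ is a submonoid containing $S$. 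You instead prove the set identity $H_{4,k}=G$ by double inclusion directly from the membership criterion $x=qa+2r$, $0\le r\le 2q$ --- locating each such $x$ with $q\le 2t+\epsilon$ in the appropriate $A$-, $B$- or $C$-block, including the spill-over of level $q$ into the window $[(q+1)a,\cdot[$, which lands in $B_{q+1,k}$ --- and then read off (1) for free since $G$ is a submonoid. This is a legitimate and arguably cleaner route: it replaces the sum case analysis by the observation that $B_{i,k}\cup C_{i,k}$ is exactly $G\cap[ia,(i+1)a[$, which you verify correctly (the full-interval part of $B_{i,k}$ ends precisely where the odd-offset contribution of level $i-1$ runs out, at offset $4i-6k-4$ from $ia$). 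The one step your sketch leaves implicit is that the tail $[(2t+\epsilon+1)a,\infty[$ is contained in $G$, which is needed for $H_{4,k}\subseteq G$; as in the paper, this is immediate from the Frobenius formula of Lemma~\ref{lem_arith_seq}, which gives $F(G)=(2t+\epsilon+1)a-2$. Parts (2) and (4) you handle exactly as the paper does.
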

\begin{proof}
\begin{enumerate}[leftmargin=*]
\item We notice that $S = A_{1,k}$.

Let $x \in A_{i_1,k}$ and $y \in A_{i_2,k}$ for some $\{i_1, i_2 \} \subseteq [1,t]$. Then 
\begin{align*}
x & = i_1 a + 2 h_1 \quad \text{with $h_1 \in [0, 2i_1]$,}\\
y & = i_2 a + 2 h_2 \quad \text{with $h_2 \in [0, 2 i_2]$.}
\end{align*}

\begin{itemize}
\item If $i_1 + i_2 \leq t$, then $x + y \in A_{i_1+i_2,k}$.

\item If $t < i_1 + i_2 \leq 2 t + \epsilon$ and $h_1 + h_2 \leq 2t + \epsilon$, then
\begin{equation*}
x + y \in B_{i_{1} + i_2, k} \cup C_{i_1 + i_2, k}.
\end{equation*}

\item If $t < i_1 + i_2 < 2 t + \epsilon$ and $h_1 + h_2 > 2t + \epsilon$, then
\begin{equation*}
2 (h_1 + h_2) > 4 t + \epsilon = 6 k,
\end{equation*}
namely $2 (h_1 + h_2) \geq a$.

Since
\begin{equation*}
x + y = (i_1+i_2+1) a + 2 (h_1+h_2) - a, 
\end{equation*}
where
\begin{equation*}
2 (h_1+h_2) - a \leq 4 (i_1 + i_2) - (4t + 2 \epsilon) = 4 (i_1 + i_2 + 1 - t - 1) - 2 \epsilon,
\end{equation*}
we conclude that 
\begin{equation*}
x + y \in B_{i_1 + i_2 + 1, k}.
\end{equation*}

\item If $i_1+i_2 > 2 t + \epsilon$ or $i_1 + i_2 = 2 t + \epsilon$ and $h_1 + h_2 > 2t + \epsilon$, then 
\begin{equation*}
x + y \in [(2t+\epsilon+1) a, \infty[.
\end{equation*}
\end{itemize}

\item All inequalities follow from the definition of the sets.

\item Let $x \in H_{4,k}$.
\begin{itemize}
\item If $x \in A_{i,k}$ for some $i \in [1,t]$, then $x \in G$ according to Lemma \ref{lem_arith_seq}.  
\item If $x \in C_{i,k}$ for some $i \in [t+1,2t + \epsilon]$, then $x \in G$ according to Lemma \ref{lem_arith_seq}. 
\item If $x \in [(2t+\epsilon+1)a, \infty[$, then $x \in G$. 

Indeed, according to Lemma \ref{lem_arith_seq} we have that
\begin{align*}
F(G) & = \left\lfloor \frac{6k-1}{2} \right\rfloor a + 2 (6k) = \left\lfloor \frac{4t-1+2 \epsilon}{2} \right\rfloor a + 2 (6k)\\
& = (2t+ \epsilon-1) a + 2 (6k) = (2 t + \epsilon + 1) a -2.
\end{align*}

\item Let $x \in B_{i,k}$ for some $i \in [t+1, 2t + \epsilon]$. 

If $x = i a + 2 h$ for some $h \in [0, 2(i-t-1) - \epsilon]$, then $h \leq 2i$ and $x \in G$ according to Lemma \ref{lem_arith_seq}.

If $x = i a + \delta$ for some odd integer $\delta \in [0, 4(i-t-1) - 2 \epsilon]$, then 
\begin{equation*}
x = (i - 1) a + \delta + a, 
\end{equation*} 
where
\begin{equation*}
\delta \leq 4(i-t-1) - 2 \epsilon - 1.
\end{equation*}
We notice that
\begin{equation*}
\frac{\delta + a}{2} = 2(i-t-1) -  \epsilon + 3k.
\end{equation*}
Since $3k-2t-\epsilon \leq 0$, we deduce that
\begin{equation*}
\frac{\delta + a}{2} \leq 2 (i-1)
\end{equation*}
and $x \in G$ in accordance with Lemma \ref{lem_arith_seq}.
\end{itemize}
Therefore we conclude that $G = H_{4,k}$.

\item If $k$ is even, then $t \equiv 0 \Mod{3}$. Let $i_1, i_2$ and $i_3$ be three consecutive positive integers such that $i_j \equiv j \Mod{3}$ for $j \in [1,3]$. Then
\begin{align*}
|A_{i_1,k}| \equiv 0 \Mod{3},\\
|A_{i_2,k}| \equiv 2 \Mod{3},\\
|A_{i_3,k}| \equiv 1 \Mod{3},
\end{align*}
and
\begin{equation*}
|A_{i_1,k} \cup A_{i_2,k} \cup A_{i_3,k}| \equiv 0 \Mod{3}.
\end{equation*}

The sequence formed by the two greatest elements of $A_{i_2,k}$ and the smallest element of $A_{i_3,k}$ reads (modulo $3$) as $(\overline{2}, \overline{1}, \overline{0})$.

The remaining elements of $\overline{g \cap (A_{i_1,k} \cup A_{i_2,k} \cup A_{i_3,k})}$ are obtained through concatenations of $3$-permutations.

For any $i \in [t+1, 2t]$ we have that 
\begin{align*}
|B_{i,k}| & \equiv i \Mod{3},\\
|C_{i,k}| & \equiv i - 1 \Mod{3},\\
|B_{i,k} \cup C_{i,k}| & \equiv 2 i - 1 \Mod{3}.
\end{align*}

Moreover $|[t+1, 2t]| = t \equiv 0 \Mod{3}$. 

Let $i_1, i_2$ and $i_3$ be three consecutive positive integers with $i_j \equiv j \Mod{3}$ for $j \in [1,3]$.

If $x_{i_1}$ is the greatest element of $B_{i_1, k}$ and $y_{i_1} < z_{i_1}$ are the smallest elements of $C_{i_1,k}$, then
\begin{equation*}
(\overline{x_{i_1}}, \overline{y_{i_1}}, \overline{z_{i_1}}) = (\overline{1}, \overline{0}, \overline{2}).
\end{equation*}

We have that $|C_{i_1,k} \backslash \{y_{i_1}, z_{i_1} \}| \equiv 1 \Mod{3}$. If $x_{i_2}$ is the greatest element of $C_{i_1, k}$ and $y_{i_2} < z_{i_2}$ are the smallest elements of $B_{i_2,k}$, then
\begin{equation*}
(\overline{x_{i_2}}, \overline{y_{i_2}}, \overline{z_{i_2}}) = (\overline{1}, \overline{2}, \overline{0}).
\end{equation*}

Since $|B_{i_2,k} \backslash \{ y_{i_2}, z_{i_2} \}| \equiv 0 \Mod{3}$, we can concentrate on $C_{i_2,k}$.

If $x_{i_3}$ is the greatest element of $C_{i_2, k}$ and $y_{i_3} < z_{i_3}$ are the smallest elements of $B_{i_3,k}$, then
\begin{equation*}
(\overline{x_{i_3}}, \overline{y_{i_3}}, \overline{z_{i_3}}) = (\overline{2}, \overline{0}, \overline{1}).
\end{equation*}

We have that $|B_{i_3,k} \backslash \{y_{i_3}, z_{i_3} \}| \equiv 1 \Mod{3}$. If $x_{i_4}$ is the greatest element of $B_{i_3, k}$ and $y_{i_4} < z_{i_4}$ are the smallest elements of $C_{i_3,k}$, then
\begin{equation*}
(\overline{x_{i_4}}, \overline{y_{i_4}}, \overline{z_{i_4}}) = (\overline{2}, \overline{1}, \overline{0}).
\end{equation*}

We conclude that $H_{4,k}$ is a $3$-permutation semigroup when $k$ is even.

Using similar arguments we can prove that $H_{4,k}$ is a $3$-permutation semigroup when $k$ is odd. 
\end{enumerate}

\end{proof}

\begin{lemma}\label{family_5}
Let $S := \{a, b, c \}$, where
\begin{equation*}
a := 6k+1, \quad b:= 2a - 6, \quad c:= 2a -2
\end{equation*}
for some integer $k \geq 2$.

Let
\begin{equation*}
H_{5,k} := \left( \cup_{i=0}^{k-1} (A_{i,k} \cup B_{i,k}) \right) \cup D_{k,k} \cup \left( \cup_{i=k+1}^{2k-1} (C_{i,k} \cup D_{i,k}) \right) \cup [(4k-1) a + 5, \infty[,
\end{equation*}
where
\begin{displaymath}
\begin{array}{rcl}
A_{i,k} & := & \{(2i+1)a, (2i+2)a - 6 (i+1), (2i+2) a - 6 (i+1) +4 \} \\
& & \cup [ (2i+2) a - 6 i, (2i+2) a - 2 ]_2,\\
\\
B_{i,k} & := & \{(2i+2)a, (2i+3)a - 6 (i+1), (2i+3) a - 6 (i+1) +4 \} \\
& & \cup [ (2i+3) a - 6 i, (2i+3)a - 2 ]_2,
\end{array}
\end{displaymath}
for $i \in [0,k-2]$,
\begin{displaymath}
\begin{array}{rcl}
A_{k-1,k} & := & \{(2k-1)a, (2k) a - 6 k, (2k) a - 6 k + 4 \} \\
& & \cup [ (2k) a - 6 k + 6, (2k) a - 2 ]_2,\\
\\
B_{k-1,k} & := & \{(2k) a, (2k+1)a - 6 k, (2k+1) a - 6 k +4 \} \\
& & \cup [ (2k+1) a - 6 k + 6, (2k+1) a - 8 ]_2\\
& & \cup [ (2k+1) a - 6, (2k+1) a - 4 ] \cup [(2k+1) a - 2, (2k+1) a],\\
\\
D_{k,k} & := &  [ (2k+1) a + 1, (2k+1) a + 1 + 6k-8]_2\\
& & \cup [ (2k+2) a - 6, (2k+2) a - 4 ] \cup [(2k+2) a - 2, (2k+2) a],
\end{array}
\end{displaymath}
and
\begin{displaymath}
\begin{array}{rcl}
C_{i,k} & := & [(2i) a + 1, (2i) a + 1 + 2 (6k-3i-4) ]_2 \\
& & \cup [ (2 i) a + 1 + 2 (6k -3i - 3) ,  (2 i) a + 3 + 2 (6k -3i - 3) ]\\
& & \cup [ (2 i) a + 5 + 2 (6k-3i-3), (2i+1) a ],\\
\\
D_{i,k} & := & [(2i+1) a + 1, (2i+1) a + 1 + 2 (6k-3i-4) ]_2 \\
& & \cup [ (2i+1) a + 1 + 2 (6k -3i - 3) ,  (2i+1) a + 3 + 2 (6k -3i - 3) ]\\
& & \cup [(2i+1) a + 5 + 2 (6k-3i-3), (2i+2) a ],
\end{array}
\end{displaymath}
for $i \in [k+1,2k-1]$.

The following hold.
\begin{enumerate}
\item We have that
\begin{align*}
C_{2k-1,k} & = [(4k-2) a + 1, (4k-2) a + 3] \cup [(4k-2) a +5, (4k-1)a],\\
D_{2k-1,k} & = [(4k-1) a + 1, (4k-1) a + 3] \cup [(4k-1) a +5, (4k)a].
\end{align*}
\item $x \in G$ if and only if 
\begin{equation*}
x = (p+2q) a + 4 r - 6 q,
\end{equation*}
where $\{p, q, r \} \subseteq \N$ and $0 \leq r \leq q$.
\item $H_{5,k} \subseteq G$.
\item $G \subseteq  H_{5,k}$.
\item The following inequalities hold:
\begin{displaymath}
\begin{array}{ll}
A_{i,k} < B_{i,k} & \text{for any $i \in [0,k-1]$;}\\
B_{i,k} < A_{i+1,k} & \text{for any $i \in [0,k-2]$;}\\
B_{k-1,k} < D_{k,k};\\
C_{i,k} < D_{i,k} & \text{for any $i \in [k+1,2k-1]$;}\\
D_{i,k} < C_{i+1,k} & \text{for any $i \in [k+1,2k-2]$.}
\end{array}
\end{displaymath}
\item $H_{5,k}$ is a $3$-permutation semigroup.
\end{enumerate}
\end{lemma}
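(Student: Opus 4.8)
The plan is to take the Diophantine characterisation in item (2) as the backbone of the whole argument and to deduce the two inclusions (3) and (4) from it directly, which is exactly why the statement splits them apart instead of first proving that $H_{5,k}$ is a submonoid. To prove (2), write a general element of $G$ as $x = p a + q' b + r' c$ with $\{p,q',r'\}\subseteq\N$ and substitute $b = 2a-6$, $c = 2a-2$ to get $x = (p+2q'+2r')a - 6q' - 2r'$. Setting $q := q'+r'$ and $r := r'$ rewrites this as $x = (p+2q)a + 4r - 6q$ with $0\le r\le q$, and the inverse substitution $q' = q-r$ (non-negative precisely when $r\le q$), $r' = r$ shows $(p,q',r')\leftrightarrow(p,q,r)$ is a bijection onto admissible triples. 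This is the only conceptual step, and everything else is organised around it.

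Item (1) is then pure substitution of $i = 2k-1$ into the definitions of $C_{i,k}$ and $D_{i,k}$: since $6k-3i-3 = 0$ and $6k-3i-4 = -1$, the first $[\,\cdot\,]_2$ block degenerates to $\emptyset$ and the remaining two blocks collapse to the stated intervals. Item (5) is obtained by reading off the minima and maxima of the explicitly listed sets and comparing them; both are bookkeeping. For (3) I would run through $A_{i,k},B_{i,k},C_{i,k},D_{i,k}$, the finitely many consecutive-integer pieces inside $B_{k-1,k}$ and $D_{k,k}$, and the terminal ray, realising each listed element through (2): the routine recipe is to take $q$ equal to the number of $-6$-shifts visible in the defining expression and then choose $r$ to hit the residual even offset, splitting on parity for the $[\,\cdot\,]_2$ intervals and bumping $q$ by one (lowering $p$ by two) to reach their upper halves. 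For instance $(2i+1)a$ uses $(p,q,r)=(2i+1,0,0)$ and $(2i+2)a-6(i+1)$ uses $(0,i+1,0)$; the constraints $r\le q$ and $p\ge 0$ are exactly what make these choices admissible, and they hold by construction.

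The heart of the proof, and the step I expect to be the main obstacle, is item (4): one must show that the listed sets cover \emph{all} of $G$ and omit nothing. Here I would sort the representable values $x=(p+2q)a+4r-6q$ by their $a$-level $n:=p+2q$ and, for each $n$, describe the attainable offset set $\{4r-6q : 0\le r\le q\le\lfloor n/2\rfloor\}$. The delicate point is that the shape of this offset set changes as $q$ grows: for small levels the binding constraint is $r\le q$, while for large levels it is $p\ge 0$, i.e. $q\le n/2$, and the crossover occurs near $q\approx k$. This is precisely why the index range splits at $i=k$, why $D_{k,k}$ is a separate transition block, and why the $[\,\cdot\,]_2$ intervals turn into genuine consecutive-integer runs near the top before the ray $[(4k-1)a+5,\infty[$ absorbs everything. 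I would match the attainable offsets at each level against the corresponding $A/B/C/D$ block, checking that the listed elements are exactly the representable ones; the co-finiteness needed for the ray follows once the last gap has been located (equivalently, from computing $F(G)$ as in the companion lemmas).

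Finally, for (6) I would use the total order supplied by (5) to list $\overline{g}$ and argue block by block that consecutive triples of residues exhaust $\Z/3\Z$. Because $a\equiv 1$ and a step-$2$ interval advances the residue by $2\equiv -1 \pmod{3}$, the relevant size congruences force each block (or each boundary group of blocks) to have length divisible by $3$ with residues lining up across the joins, so $\overline{g}$ is a concatenation of $3$-permutations. This is the same mechanism as in Lemmas \ref{family_1}--\ref{family_4}, only with more boundary cases to tabulate, in particular around the transition block $D_{k,k}$ and the switch from $[\,\cdot\,]_2$ blocks to consecutive runs.
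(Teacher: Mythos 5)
Your plan matches the paper's proof in all essentials: item (2) via reduction of $\langle b,c\rangle$ to the form $qb+4r$ with $0\le r\le q$ (your direct substitution is equivalent to the paper's appeal to Lemma \ref{lem_arith_seq}), item (3) by realizing each listed element through that characterization, item (4) by sorting representations by the level $j=p+2q$ and analysing the attainable offsets $4r-6q$ level by level with the same crossover at $q\approx k$ and the ray handled via co-finiteness, and item (6) by the block-by-block residue count. The approach is sound and is the one the paper takes.
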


\begin{proof}
\begin{enumerate}[leftmargin=*]
\item Both the assertions follow from the definition of the sets.
\item We have that $x \in G$ if and only if 
\begin{equation*}
x = p a + s b + t c
\end{equation*}
for some $\{p, s, t \} \subseteq \N$.

Since $s b + t c \in \langle b, c \rangle$, according to Lemma \ref{lem_arith_seq} we can write
\begin{align*}
x & = p a + q b + 4 r = (p + 2q) a + 4r-6q,
\end{align*}
where $0 \leq r \leq q$.

\item We prove the assertion dealing with different cases.
\begin{itemize}
\item \emph{Case 1:} $x \in A_{i,k} \cup B_{i,k}$ with $i \in [0,k-2]$. 

Since the case $x \in B_{i,k}$ is very similar to the case $x \in A_{i,k}$, we discuss in detail just this latter one.

If $x \in \{(2i+1)a, (2i+2)a - 6 (i+1), (2i+2) a - 6 (i+1) +4 \}$, then $x \in G$ by the characterization of the elements belonging to $G$.

Now suppose that $x = (2i+2) a - 6 i +2 h$ with $h \in [0,3i-1]$. We notice that $-6i + 2h$ can be written in the form 
\begin{equation*}
-6 \tilde{i} + 4 \tilde{h},
\end{equation*}
where $\tilde{i} \in [1, i+1]$ and $\tilde{h} \in [0,2]$. Hence
\begin{equation*}
x = (2 + 2 (i - \tilde{i}) + 2 \tilde{i}) a + 4 \tilde{h} - 6 \tilde{i} ,
\end{equation*}
where $\tilde{i}$ and $\tilde{h}$ are as above.

We have that $\tilde{h} \leq \tilde{i}$ in the case that $\tilde{i} \geq 2$. 

If $\tilde{i} = 1$, then $\tilde{h} \leq 1$. In fact, if $\tilde{h} = 2$, then $x > (2i+2) a$ in contradiction with the fact that $x < (2i+2) a$.

\item \emph{Case 2:} $x \in A_{k-1,k} \cup B_{k-1,k}$. 

The proof follows the same lines as in Case 1. We discuss in detail just the case
\begin{equation*}
x \in [ (2k+1) a - 6, (2k+1) a - 4 ] \cup [(2k+1) a - 2, (2k+1) a].
\end{equation*}
More precisely we show that any $x$ in such a range belongs to $G$. 

Obviously we have that $(2k+1) a \in G$. Therefore we deal with the remaining elements.
\begin{itemize}
\item $(2k+1) a - 6 = (1 + 2 (k-1) + 2 \cdot 1) a - 6 \cdot 1$.
\item $(2k+1) a - 5 = (2(k+1)) a - 6 (k+1)$.
\item $(2k+1) a - 4 = (1 + 2 (k-2) + 2 \cdot 2) a + 4 \cdot 2 - 6 \cdot 2$.
\item $(2k+1) a - 2 = (1 + 2 (k-1) + 2 \cdot 1) a + 4 \cdot 1 - 6 \cdot 1$.
\item $(2k+1) a - 1 = (2(k+1)) a + 4 \cdot 1 - 6 (k+1)$.
\end{itemize}

\item \emph{Case 3:} $x \in C_{i,k} \cup D_{i,k}$ with $i \in[k+1,2k-1]$ or $x \in D_{k,k}$. 

We discuss in detail just the case $x \in C_{i,k}$.

If $x \in C_{i,k}$ and $x$ is odd, then 
\begin{align*}
x & = (2i) a + 1 + 2 h \\
& = (2i+1) a - 6 k + 2 h
\end{align*}
for some $h \in [0,3k]$.

We can prove that $x \in G$ as in Case 1. More precisely,
\begin{equation*}
-6 k + 2 h = - 6 \tilde{k} + 4 \tilde{h}
\end{equation*}
for some $\tilde{k} \in [1,k+1]$ and $\tilde{h} \in [0,2]$.

Since
\begin{align*}
(2i+1) a - 6 k + 2 h & = (2i+1) a - 6 \tilde{k} + 4 \tilde{h} \\
& = (1 + 2 (i - \tilde{k}) + 2 \tilde{k})  + 4 \tilde{h} - 6 \tilde{k},
\end{align*} 
we conclude that $x \in G$.

If $x \in C_{i,k}$ and $x$ is even, then 
\begin{align*}
x = (2i)a + 2 + 2 (6k-3i-3) \quad \text{ or } \quad x \geq (2i)a + 6 + 2 (6k-3i-3).
\end{align*}
In the former case we notice that
\begin{align*}
(2i)a + 2 + 2 (6k-3i-3) & = (2i) a - 6 + 2 (6k+1) - 6 i \\
& = (2i+2) a - 6 (i+1).
\end{align*}

In the latter case, if $x = (2i) a + 6 + 2 (6k-3i-3)$, then 
\begin{align*}
x & = (2 + 2 i) a - 6i. 
\end{align*}

In the remaining cases we have that 
\begin{align*}
x = (2i)a + 8 + 2 (6k-3i-3) + 2 h,
\end{align*}
namely
\begin{align*}
x & =  (2i+2) a + 6 - 6 (i+1) + 2 h\\
& = (2i+2) a - 6 i + 2 h
\end{align*}
for some $h \geq 0$. Hence the assertion can be proved as in Case 1.

\item \emph{Case 4:} $x \in [(4k-1) a + 5, \infty[$. 

By the characterization of the elements of $D_{2k-1,k}$ we have that
\begin{equation*}
[(4k-1) a + 5,  (4k)a] \subseteq G
\end{equation*}
and
\begin{equation*}
[(4k) a + 1, (4k) a + 3 ] = [(4k-1) a + 1, (4k-1) a + 3 ] + \{ a \} \subseteq G.
\end{equation*}
Since 
\begin{equation*}
4k a + 4 = (4k-2) a + 6 + 12 k
\end{equation*}
and $(4k-2)a + 6 \in C_{2k-1,k}$, while $12 k \in A_{0,k}$, we conclude that $\Ap(G, a) \subseteq [a, 4k a + 4]$. Hence we conclude that $[(4k-1) a + 5, \infty[ \subseteq G$.

\item Let $x \in G$. Then 
\begin{equation*}
x = (p + 2q) a + 4 r - 6 q,
\end{equation*} 
where $\{p, q, r \} \subseteq \N$ and $0 \leq r \leq q$.

Let $j:= p + 2 q$.

First we consider the case $j \geq 4k + 2$. 

Since $q \leq \frac{j}{2}$, we have that
\begin{align*}
j a + 4 r - 6 q & \geq j (a-3) \geq (4k+2) (a-3) \\
& = (4k+2) a - 2 (6k+3) = (4 k +2) a - 2 a - 4\\
& = (4k-1) a + a - 4\\
& \geq (4k-1) a + 9
\end{align*}
because $a \geq 13$. Hence $x \in G$.

Now we consider the case $j \leq 4k+1$. We notice that
\begin{equation*}
j a - 3 j = j (a-3) \geq (4k) (a-3) > (4k -2) a. 
\end{equation*}

We discuss separately some subcases.
\begin{itemize}
\item \emph{Subcase 1:} $j = 2 i +2 $ for some $i \in [0,k-2]$. 

We have that $x \in A_{i,k} \cup B_{i,k}$ because
\begin{align*}
x = (2i+2) a + 4 r - 6 q 
\end{align*}
with $q \leq i+1$, namely 
\begin{displaymath}
\begin{array}{c}
x \in \{j a - 6 (i+1), j a - 6 (i+1) +4 \} \cup \{j a \} \\
\text{or}\\
x = j a - 6 i + 2 h 
\end{array}
\end{displaymath}

for some $h \in [0,3i-1]$.

\item \emph{Subcase 2:} $j = 2 i + 3 $ for some $i \in [0,k-2]$. 

We can prove as above that $x \in B_{i,k} \cup A_{i+1,k}$.

\item \emph{Subcase 3:} $j \in \{2 k, 2 k +1 \}$. 

We can prove as above that $x \in A_{k-1,k} \cup B_{k-1,k}$.

\item \emph{Subcase 4:} $j = 2 i + 2$ for some $i \in [k+1, 2k-1]$.

If $q \leq k$, then we can prove as above that $x \in D_{i,k}$.

If $q > k$, then $x \in C_{i,k}$. In fact, $(2i+1) a   > x$ and
\begin{align*}
x & = (2i+2) a + 4 r - 6 q \\
& \geq (2i+2) a - 6 q \geq (2 i) a + 2 (6 k +1) - 6 (i +1)\\
& = (2i) a + 2 + 2 (6k -3 i -3).
\end{align*}
In particular, 
\begin{equation*}
x = (2i) a + 2 + 2 (6k -3 i -3)
\end{equation*}
if $r = 0$ and $q = i +1$, while 
\begin{equation*}
x \geq (2i) a + 6 + 2 (6k -3 i -3)
\end{equation*}
in the other cases.

\item \emph{Subcase 5:} $j = 2 i + 1$ for some $i \in [k+1, 2k]$.

We can prove as above that $x \in C_{i,k} \cup D_{i-1,k}$.
\end{itemize}
\end{itemize}

\item All inequalities follow from the definitions of the sets.

\item First we notice that
\begin{align*}
|A_{i,k}| = |B_{i,k}| = |C_{i,k}| = |D_{i,k}| = 3 i +3
\end{align*}
for any suitable $i$, while
\begin{align*}
|A_{k-1,k}| & = 3 k, \\
|B_{k-1,k}| = |D_{k,k}| & = 3 k + 3.\\
\end{align*}
We notice that $3$ divides the cardinality of all the sets $A_{i,k}, B_{i,k}, C_{i,k}, D_{i,k}$ and the elements of $g$ in such sets are obtained (modulo $3$) via a concatenation of $3$-permutations. We check that this latter assertion is true for the sets $A_{i,k}$. The assertion can be verified in a similar way for the remaining sets.

For a fixed index $i$ we have that
\begin{align*}
(2i+1) a & \equiv 2 i + 1 \Mod{3},\\
(2i+2) a - 6 (i+1) & \equiv 2 i + 2 \Mod{3},\\
(2i+2) a - 6 (i+1) + 4 & \equiv 2 i \Mod{3}, 
\end{align*}
while the elements in 
\begin{equation*}
[ (2i+2) a - 6 i, (2i+2) a - 2 ]_2
\end{equation*}
are given (modulo $3$) by repeated concatenations of the sequence 
\begin{equation*}
(\overline{2i+2}, \overline{2i+2+2}, \overline{2i+2+1}).
\end{equation*}
\end{enumerate}
\end{proof}

\begin{lemma}\label{family_6}
Let $k$ be positive integer and $a:=3k+1$.

Let $S := \{ a, 2a - 3, 2 a - 2 \}$ and $H_{6,k} :=\langle S \rangle$.

Then $H_{6,k}$ is a $3$-permutation semigroup.
\end{lemma}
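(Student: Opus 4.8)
The plan is to mirror the strategy used for Lemma \ref{family_5}, exploiting that here the three generators are $a = 3k+1$, $b = 2a-3 = 6k-1$ and $c = 2a-2 = 6k$, so that $c = b+1$, both $b$ and $c$ lie just below $2a$, and crucially $a \equiv 1 \Mod{3}$.

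First I would obtain an explicit arithmetic description of $G$. Since $\{b,c\}$ is an arithmetic progression of common difference $1$, Lemma \ref{lem_arith_seq} gives $\langle b,c\rangle = \{qb + r : q,r \in \N,\ 0 \le r \le q\}$; adding multiples of $a$ yields that $x \in G$ if and only if $x = (p+2q)a - 3q + r$ with $\{p,q,r\} \subseteq \N$ and $0 \le r \le q$, exactly as in Lemma \ref{family_5}(2). Then I would organize $G$ into \emph{levels} indexed by $j := p+2q$. For fixed $j$ the admissible pairs $(q,r)$ produce, for each $q \in [0,\lfloor j/2\rfloor]$, the block of consecutive integers $[ja-3q, ja-2q]$; these telescope into the single interval $[ja - 3\lfloor j/2\rfloor,\ ja-2]$ together with the isolated point $ja$, the only interior gap of level $j$ being at $ja-1$. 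Thus level $j$ equals $\{ja\} \cup [ja - 3\lfloor j/2\rfloor, ja-2]$, which for $j \ge 2$ comprises exactly $3\lfloor j/2\rfloor$ integers (while level $1$ is the single point $a$). Comparing $ja$ with $(j+1)a - 3\lfloor (j+1)/2\rfloor$ shows the levels stay separated while $\lfloor j/2\rfloor \le k$ and start to coalesce once $\lfloor j/2\rfloor > k$; from this I would extract a threshold $N$ with $[N,\infty[ \subseteq G$, giving cofiniteness and hence $G = H_{6,k}$ after packaging the level data into sets $A_{i,k}, B_{i,k}, \dots$ as in the previous lemmas.

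The heart of the argument, and the step I expect to be most delicate, is the $3$-permutation property. Because $a \equiv 1 \Mod{3}$ we have $ja \equiv j \Mod{3}$, so the interval of level $j$ reads modulo $3$ as repetitions of $(\bar j, \overline{j+1}, \overline{j+2})$, since its lowest endpoint $ja - 3\lfloor j/2\rfloor \equiv j$, while the singleton $ja$ contributes the residue $\bar j$. The one extra element per level (the singleton, reflecting that level $1$ alone carries a single element whereas every later level has cardinality divisible by $3$) forces the $3$-permutation blocks to straddle consecutive levels: the singleton $ja$ joins the two lowest elements of level $j+1$, of residues $\overline{j+1}$ and $\overline{j+2}$, to form the permutation $(\bar j, \overline{j+1}, \overline{j+2})$, after which the remaining $3\lfloor (j+1)/2\rfloor - 3$ interval elements split into full triples $(\bar j, \overline{j+1}, \overline{j+2})$ and the next singleton $(j+1)a$ opens the following block. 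I would verify the base case starting from $g_1 = a$, whose opening block is $(\overline a, \overline{2a-3}, \overline{2a-2}) = (\bar 1,\bar 2,\bar 0)$, and then propagate this alignment inductively over all levels.

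The main obstacle is controlling this block alignment across the range where the levels coalesce (near $\lfloor j/2\rfloor = k$) and into the cofinite tail $[N,\infty[$, where the elements become all consecutive integers and the residues reduce to a plain repetition of $(\bar 0, \bar 1, \bar 2)$ up to a shift. There one must confirm that the cumulative position count still lands the block boundaries correctly at the transition, a finite case analysis of exactly the flavor carried out in Lemma \ref{family_5}(6); once the last straddling block before $N$ is checked to be a $3$-permutation, the tail poses no further difficulty and the proof concludes that $H_{6,k}$ is a $3$-permutation semigroup.
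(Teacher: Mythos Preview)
Your proposal is correct, and the level decomposition you describe is exactly the structure the paper exploits, but the paper's own proof of this lemma is a one-line reference to Lemma~\ref{family_n_1}, which treats the whole family $S=\{a\}\cup[2a-n,2a-2]$ with $a=nk+1$ for arbitrary $n\ge 3$; the case $n=3$ is precisely $S=\{3k+1,6k-1,6k\}$. Your ``levels'' $\{ja\}\cup[ja-3\lfloor j/2\rfloor,\,ja-2]$ reappear there, regrouped as the sets $A_{i,k}=\{(2i{+}1)a\}\cup[(2i{+}2)a-3(i{+}1),(2i{+}2)a-2]$ and $B_{i,k}=A_{i,k}+\{a\}$, each pairing a singleton with the interval of the next level; your straddling observation is exactly why $|A_{i,k}|=|B_{i,k}|=3(i{+}1)$ and why each such set decomposes into $3$-permutations. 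So you have essentially rediscovered the $n=3$ instance of that lemma's proof rather than taken a different route.

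One simplification worth noting: the transition you flag as the main obstacle is cleaner than you expect. Because $a-3k=1$, the last pre-tail block $A_{k-1,k}=\{(2k{-}1)a\}\cup[2ka-3k,\,2ka-2]$ collapses to the full interval $[(2k{-}1)a,\,2ka-2]$, and the cofinite tail is simply $[2ka,\infty[$, with the sole remaining gap at $2ka-1$. No levels ``partially coalesce'': one passes directly from disjoint blocks, each of cardinality divisible by $3$, to an unbroken ray, so the boundary alignment is automatic and no case analysis in the style of Lemma~\ref{family_5}(6) is required.
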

\begin{proof}
See Lemma \ref{family_n_1}.
\end{proof}

\begin{lemma}\label{family_7}
Let $k$ be a positive integer and $a:=3k+2$.

Let $S := \{a, a+1, a+2 \}$, $t := \lfloor \frac{3k}{2} \rfloor$ and

\begin{equation*}
H_{7,k} := \cup_{i=0}^t I_{i,k} \cup [(t+1)a, \infty[,
\end{equation*}
where 
\begin{equation*}
I_{i,k} := [ia, ia + 2 i]
\end{equation*}
for any $i \in [0,t+1]$.

Then the following hold.

\begin{enumerate}
\item $G = H_{7,k}$.
\item $I_{i,k} < I_{i+1,k}$ for any $i \in [0,t]$ and $I_{t,k} < [(t+1)a, \infty[$.
\item $H_{7,k}$ is a $3$-permutation semigroup.
\end{enumerate}
\end{lemma}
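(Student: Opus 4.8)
The plan is to determine $G$ exactly from the arithmetic-sequence lemma and then read off the modular sequence one interval at a time.

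\textbf{Parts (1) and (2).} Here $S=\{a,a+1,a+2\}$ is an arithmetic progression with first term $a_0=a=3k+2$, common difference $e=1$ and two steps, so Lemma \ref{lem_arith_seq} applies directly: $x\in G$ if and only if $x=aq+r$ with $\{q,r\}\subseteq\N$ and $0\le r\le 2q$, i.e. exactly when $x\in I_{q,k}=[qa,qa+2q]$ for some $q$. Thus $G=\cup_{q\in\N} I_{q,k}$, and I would split this union at $q=t$. The inequality $I_{i,k}<I_{i+1,k}$ for $i\le t$ is equivalent to $ia+2i<(i+1)a$, i.e. $2i<a$, which holds since $2i\le 2t\le 3k<3k+2$; this is (2) and shows $I_{0,k},\dots,I_{t,k}$ are pairwise disjoint with gaps. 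For $i\ge t+1$ the estimate reverses: $2i\ge 2t+2\ge 3k+1=a-1$, so $\max I_{i,k}=ia+2i\ge (i+1)a-1$ and consecutive intervals abut or overlap, whence $\cup_{i\ge t+1} I_{i,k}=[(t+1)a,\infty[$. Combining the two ranges gives $G=H_{7,k}$, which is (1).

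\textbf{Part (3).} This is the real work. I would first record $|I_{i,k}|=2i+1$ and observe that any three consecutive intervals have total length $(2i+1)+(2i+3)+(2i+5)=6i+9\equiv 0\Mod{3}$, which suggests grouping the intervals into consecutive triples $\{I_{3m+1,k},I_{3m+2,k},I_{3m+3,k}\}$. The central claim is that, reading residues in increasing order, each such triple of intervals is \emph{on its own} a concatenation of $3$-permutations. Since $a\equiv 2\Mod{3}$, the first element $ia$ of $I_{i,k}$ has residue $2i\Mod{3}$, and the interval being a block of consecutive integers its residues then increase by $1$. Tracking this through a group I expect to find that $I_{3m+1,k}$ (starting residue $\overline 2$, length $\equiv 0$) contributes full copies of $(\overline 2,\overline 0,\overline 1)$; that $I_{3m+2,k}$ (starting residue $\overline 1$, length $\equiv 2$) contributes full copies of $(\overline 1,\overline 2,\overline 0)$ followed by a dangling $\overline 1,\overline 2$; and that the first element of $I_{3m+3,k}$, of residue $\overline 0$, completes precisely that dangling pair into $(\overline 1,\overline 2,\overline 0)$, after which the rest of $I_{3m+3,k}$ runs in full $(\overline 1,\overline 2,\overline 0)$ blocks. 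Each aligned triple is a $3$-permutation by the elementary lemma on triples of consecutive integers, and, most importantly, the group begins and ends on a triple boundary, so the groups concatenate freely.

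Finally I would absorb the tail $[(t+1)a,\infty[$. As each triple-group consumes a multiple of $3$ elements, after all complete groups among $I_{1,k},\dots,I_{t,k}$ the sequence sits on a triple boundary; here I split on $t\bmod 3$, noting $t\equiv 0$ when $k$ is even and $t\equiv 1$ when $k$ is odd. When $t\equiv 0\Mod{3}$ the intervals group evenly and the tail starts immediately at a boundary; when $t\equiv 1\Mod{3}$ a single interval $I_{t,k}$ is left over, but $|I_{t,k}|=2t+1\equiv 0\Mod{3}$ and it starts at residue $\overline 2$, so it is itself a run of $(\overline 2,\overline 0,\overline 1)$ permutations and again lands on a boundary. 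In either case $[(t+1)a,\infty[$ is a block of consecutive integers entered at a position $\equiv 1\Mod{3}$, so every aligned triple $(\overline x,\overline{x+1},\overline{x+2})$ is a $3$-permutation, completing (3). The main obstacle is exactly this bookkeeping in Part (3): checking that the residue runs in the three intervals of a group mesh across the gap-separated boundaries, and handling the $t\bmod 3$ split so that the infinite tail is entered at a multiple-of-three position.
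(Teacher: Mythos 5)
Your proposal is correct and follows essentially the same route as the paper: Lemma \ref{lem_arith_seq} gives $G=\cup_q I_{q,k}$ for parts (1)--(2), and part (3) groups the intervals into consecutive triples indexed by residue mod $3$ and tracks how the dangling residues $\overline{1},\overline{2}$ at the end of the middle interval are completed by the first element of the next one. The only (harmless) variations are that you close the union of the intervals with $i\ge t+1$ into $[(t+1)a,\infty[$ by showing consecutive intervals abut, where the paper invokes the Frobenius-number formula, and that you spell out the $t\bmod 3$ bookkeeping at the tail a bit more explicitly than the paper does.
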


\begin{proof}
\begin{enumerate}[leftmargin=*]
\item Let $x \in I_{i,k}$ for some $i \in [0,t]$. Then $x \in G$ according to Lemma \ref{lem_arith_seq}. As claimed in the same lemma, we have that 
\begin{align*}
F(G) & = (3k+2) \left\lfloor \frac{3k}{2} \right\rfloor + (3k+1)\\
& = (3k+2) t + (3k+1) < (t+1) a.
\end{align*}
Therefore $[(t+1)a, \infty[ \subseteq G$.

Let $x \in G$. 

If $x < (t+1) a$, then $x \in I_{i,k}$ for some $i$, according to Lemma \ref{lem_arith_seq}. 

If $x \geq (t+1) a$, then $x \in [(t+1) a, \infty[$.

Hence $G \subseteq H_{7,k}$.

\item All inequalities can be easily verified. 

\item We notice that $H_{7,k} \backslash \{ 0 \} = (\cup_{i=1}^t I_{i,k}) \cup [(t+1)a, \infty[$. 

If $i_1, i_2$ and $i_3$ are three consecutive positive integers such that $i_j \equiv j \Mod{3}$ for any $j \in \{1,2,3 \}$, then
\begin{align*}
|I_{i_1,k}| & \equiv 0 \Mod{3},\\
|I_{i_2,k}| & \equiv 2 \Mod{3},\\
|I_{i_3,k}| & \equiv 1 \Mod{3}.
\end{align*}

Since $3$ divides $|I_{i_1,k}|$, we concentrate on the elements of $I_{i_2}$ and $I_{i_3}$. 

Let $x_{i_2}$ and $y_{i_2}$ be the greatest elements of $I_{i_2,k}$ and $z_{i_2}$ the smallest element of $I_{i_3,k}$. Then
\begin{equation*}
(\overline{x_{i_2}}, \overline{y_{i_2}}, \overline{z_{i_2}}) = (\overline{1}, \overline{2}, \overline{0}).
\end{equation*} 
The remaining elements of $I_{i_1}, I_{i_2}$ and $I_{i_3}$ can be obtained (modulo $3$) through concatenations of $3$-permutations. Therefore we  conclude that $H_{7,k}$ is a $3$-permutation semigroup. 
\end{enumerate}
\end{proof}

\begin{lemma}\label{family_8}
Let $k$ be a positive integer and $a:=12k+2$.

Let $S := \left\{a, a+2, a +  \frac{a}{2} \right\}$ and
\begin{equation*}
H_{8,k} := A_{0,k} \cup \left( \cup_{i=1}^{3k} (A_{i,k} \cup B_{i,k}) \right) \cup \left(\cup_{i=3k+1}^{6k+1} C_{i,k} \right) \cup \left(\cup_{i={3k+1}}^{6k} D_{i,k} \right) \cup E,
\end{equation*}
where $A_{0,k} := \{ 0 \}$, while
\begin{align*}
A_{i,k} & := [ia, ia  + 2 i ]_2,\\
B_{i,k} & := A_{i-1,k} + \left\{ a + \frac{a}{2}  \right\},
\end{align*}
for any $i \in [1,3k]$, 
\begin{align*}
D_{i,k} & := \left[ i a + \frac{a}{2}, ia + \frac{a}{2} + 2 (i-3k) \right] \cup \left[ i a + \frac{a}{2} + 2 (i-3k+1), i a + \frac{a}{2} + 6 k  \right]_2
\end{align*}
for any $i \in [3k+1,6k]$,
\begin{align*}
C_{3k+1,k} & := [(3k+1) a, (3k+1)a + 6 k ]_2,\\
C_{3k+2,k} & := [(3k+2) a, (3k+2)a + 6 k ]_2,
\end{align*}
while
\begin{align*}
C_{i,k} & := D_{i-2,k} + \left\{ a + \frac{a}{2} \right\}
\end{align*}
for any $i \in [3k+1,6k+1]$ and
\begin{equation*}
E:=\left[ (6k+1) a + \frac{a}{2}, \infty \right[.
\end{equation*}

Then the following hold.

\begin{enumerate}
\item $x \in G$ if and only if 
\begin{equation*}
x = q a + 2 r + s \left( a + \frac{a}{2} \right),
\end{equation*}
where $\{q, r, s \} \subseteq \N$ and $0 \leq r \leq q$.
\item $G = H_{8,k}$.
\item The following inequalities hold:
\begin{displaymath}
\begin{array}{ll}
A_{i,k} < B_{i,k} & \text{for any $i \in [1,3k]$;}\\
B_{i,k} < A_{i+1,k} & \text{for any $i \in [1, 3k-1]$;}\\
B_{3k,k} < C_{3k+1,k}; \\
C_{i,k} < D_{i,k} & \text{for any $i \in [3k+1,6k]$;}\\
D_{i,k} < C_{i+1,k} & \text{for any $i \in [3k+1,6k]$;}\\
C_{6k+1,k} < E. 
\end{array}
\end{displaymath}
\item $H_{8,k}$ is a $3$-permutation semigroup.
\end{enumerate}
\end{lemma}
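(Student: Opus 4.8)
The plan is to follow closely the template of Lemma~\ref{family_5}, because the generating set has the same shape: two generators $a$ and $a+2$ in arithmetic progression of common difference $2$, together with a third generator $a+\frac a2=\frac{3a}{2}=18k+3$; throughout I write $d:=a+\frac a2$. For the membership criterion (1), I would apply Lemma~\ref{lem_arith_seq} to the two–term progression $\{a,a+2\}$ (so that $a_0=a$, $e=2$ and ``$k$'' $=1$ in the notation of that lemma), obtaining
\begin{equation*}
\langle a,a+2\rangle=\{\,qa+2r:\{q,r\}\subseteq\N,\ 0\le r\le q\,\}.
\end{equation*}
Adjoining the non-negative multiples $s\,d$ of the third generator yields exactly the announced form $x=qa+2r+s\big(a+\frac a2\big)$ with $0\le r\le q$; the reverse inclusion is immediate, since every element of that form is a non-negative combination of the three generators.

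The six families of inequalities in (3) all reduce to direct comparisons of the endpoints appearing in the definitions of $A_{i,k},B_{i,k},C_{i,k},D_{i,k}$ and $E$, so I would dispose of them first. With (1) in hand, the identity $G=H_{8,k}$ in (2) is then proved by the usual two inclusions. For $H_{8,k}\subseteq G$ I would run through the list of defining blocks, exactly as in Lemma~\ref{family_5}(3): the even-step pieces sit inside $\langle a,a+2\rangle$ by Lemma~\ref{lem_arith_seq}, a shift by $d$ accounts for the ``$+\{a+\frac a2\}$'' translations defining $B_{i,k}$ and $C_{i,k}$, and the short consecutive intervals inside the $D_{i,k}$ are handled by combining a term $qa+2r$ with one extra copy of $d$. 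For $G\subseteq H_{8,k}$ I would take a general $x=qa+2r+sd$ with $0\le r\le q$ and locate it in the appropriate block by case analysis on $q,r,s$, the two regimes being $i\le 3k$ and $i>3k$; the cofinite tail $E=[(6k+1)a+\frac a2,\infty[$ would be captured by an Ap\'ery/Frobenius estimate showing $\Ap(G,a)\subseteq[a,(6k+1)a+\frac a2]$, as in Lemma~\ref{family_5}(4), Case~4.

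For the $3$-permutation property (4), a short computation gives $|A_{i,k}|=i+1$, $|B_{i,k}|=i$, $|D_{i,k}|=i+1$, $|C_{3k+1,k}|=|C_{3k+2,k}|=3k+1$, and $|C_{i,k}|=|D_{i-2,k}|=i-1$ for $i\ge 3k+3$. I would then group the blocks so that each group has cardinality divisible by $3$ (over three consecutive indices $|A_{i,k}\cup B_{i,k}|=2i+1$ sums to $6i_1+9\equiv0\Mod{3}$, and similarly for the $C$–$D$ region), and verify, using the elementary triple lemma stated in Section~2, that inside each block the residues modulo $3$ run through concatenated $3$-permutations, since every block is a union of runs of step $1$ or step $2$. \textbf{The genuinely delicate part} is the residue bookkeeping at the many junctions between blocks — $A$–$B$, $B$–$A$, $B_{3k,k}$–$C_{3k+1,k}$, $C$–$D$, $D$–$C$ and $C_{6k+1,k}$–$E$ — where one must check that the boundary triple formed by the last two elements of one block and the first element of the next is again a $3$-permutation, and that these triples glue consistently across the change of regime at $i=3k$. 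The two-piece structure of $D_{i,k}$, namely a consecutive interval followed by an even-step interval, makes this the only step requiring real care, and it is where I would spend most of the effort.
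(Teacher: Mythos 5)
Your proposal follows essentially the same route as the paper's proof: part (1) via Lemma \ref{lem_arith_seq} applied to $\langle a, a+2\rangle$ plus multiples of $a+\frac{a}{2}$, part (2) by the two inclusions with an Ap\'ery-set bound for the tail $E$, part (3) by endpoint comparison, and part (4) by the same cardinality counts ($|A_{i,k}|=i+1$, $|B_{i,k}|=i$, $|D_{i,k}|=i+1$, $|C_{i,k}|=i-1$ for $i\geq 3k+3$), grouping over three consecutive indices, and checking boundary triples. The computations you do carry out are correct, and the junction bookkeeping you flag as the delicate step is exactly where the paper's own argument concentrates its (likewise somewhat compressed) verification.
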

\begin{proof}
\begin{enumerate}[leftmargin=*]
\item By definition, $x \in G$ if and only if 
\begin{equation*}
x = t a + u (a+2) + v \left(a + \frac{a}{2} \right)
\end{equation*}
for some $\{t, u, v \} \subseteq \N$. Since $ta + u (a+2) \in \langle a, a+2 \rangle$, according to Lemma \ref{lem_arith_seq} we can say that $x \in G$ if and only if 
\begin{equation*}
x = q a + 2 r + v \left( a + \frac{a}{2} \right),
\end{equation*}
where $\{q, r \} \subseteq \N$ and $0 \leq r \leq q$.

\item Let $x \in H_{8,k}$. 

If $x$ belongs to one of the sets $A_{i,k}, B_{i,k}, C_{i,k}$ or $D_{i,k}$ for some index $i$, then $x \in G$ in virtue of the way such sets have been defined and the characterization of the elements of $G$.

Moreover, we notice that
\begin{equation*}
D_{6k,k} \cup C_{6k+1,k} = \left[ 6 k a + \frac{a}{2}, (6k+1)a + 6 k \right] \backslash \{(6k+1) a + 6k -1 \}
\end{equation*}
and
\begin{equation*}
(6k+2) a + 6k -1 = (6k+1) a + 6k -3 + (a+2) \subseteq C_{6k+1,k} + S.
\end{equation*}
Hence $F(G) = (6k+1) a + 6k -1$ and $E \subseteq G$.

Vice versa, if $x \in G$ and $x < (6k+1) a + \frac{a}{2}$, then 
\begin{equation*}
x = (s+q) a + 2 r + s \cdot \frac{a}{2},
\end{equation*}
where $s = 2 t + \epsilon$ with $\epsilon \in [0,1]$. Therefore we can write that
\begin{equation*}
x = i a + 2 r + \epsilon \cdot \frac{a}{2},
\end{equation*}
where $ i:= t + q$.

If $\epsilon = 0$, then $x \in A_{i,k} \cup C_{i,k}$, while $x \in B_{i,k} \cup D_{i,k}$ if $\epsilon = 1$.

\item All inequalities can be easily verified.

\item If $i_1, i_2$ and $i_3$ are three consecutive positive integers such that $i_j \equiv j \Mod{3}$ for any $j \in \{1 ,2, 3 \}$, then
\begin{equation*}
| \cup_{j=1}^3 (A_{i_j} \cup B_{i_j}) | \equiv 0 \Mod{3}.
\end{equation*} 

For any $j \in \{1 ,2, 3 \}$ we have that
\begin{align*}
|A_{i_j} | & = i_j + 1  \equiv j +1 \Mod{3},\\
|B_{i_j} | & = i_j  \equiv j \Mod{3}.
\end{align*}

If $j = 1$, then $|A_{i_1} \cup B_{i_1}| \equiv 0 \Mod{3}$. Let $x_{i_1}$ and $y_{i_1}$ be the greatest elements of $A_{i_1}$ and $z_{i_1}$ the smallest element of $B_{i_1}$. Then 
\begin{equation*}
(\overline{x_{i_1}}, \overline{y_{i_1}}, \overline{z_{i_1}}) = (\overline{2}, \overline{1}, \overline{0}).
\end{equation*} 
We notice in passing that the other elements of $\overline{g \cap A_{i_1}}$ and $\overline{g \cap B_{i_1}}$ are obtained through concatenations of $3$-permutations.
Using a similar argument we can prove that the elements of $\overline{g \cap (\cup_{j=2}^3 (A_{i_j} \cup B_{i_j}))}$ are obtained through a concatenation of $3$-permutations.

As regards the sets $C_{i,k}$ and $D_{i,k}$, first we notice that
\begin{align*}
|C_{3k+1,k}| & = 3 k +1,\\
|D_{3k+1,k}| & = 3 k +2.
\end{align*}
The greatest element of $C_{3k+1,k}$ and the two smallest elements of $D_{3k+1,k}$ form (modulo $3$) a $3$-permutation, while the remaining elements of $\overline{g \cap C_{3k+1,k}}$ and $\overline{g \cap D_{3k+1,k}}$ are given by concatenations of $3$-permutations.

In general we notice that 
\begin{align*}
|C_{i,k}| & = i - 1,\\
|D_{i,k}| & = i + 1. 
\end{align*}
Hence, if we take  three consecutive positive integers $i_1, i_2$ and $i_3$ such that $i_j \equiv j \Mod{3}$ for any $j$, then we can check as above that the sequence of elements $\overline{g \cap (\cup_{j=1}^3 (C_{i_j,k} \cup D_{i_j,k}))}$ is given by a concatenation of $3$-permutations.

\end{enumerate}
\end{proof}

\begin{lemma}\label{family_9}
Let $k$ be a positive integer, $a:=3k+2$, $b:=2a-3$ and $c:=2a-1$.

Let $S:=\{a, b, c \}$ and
\begin{equation*}
H_{9,k} := \left( \cup_{i=0}^{2k+1} I_{i,k} \right) \cup [2ka + 4, \infty[,
\end{equation*}
where 
\begin{align*}
I_{i,k} & := \left\{ia - 3 \left\lfloor \frac{i}{2} \right\rfloor \right\} \cup \left[ i a - 3 \left\lfloor \frac{i}{2} \right\rfloor +2, i a \right]
\end{align*}  
for any $i \in [0,2k+1]$.

Then the following hold.
\begin{enumerate}
\item $x \in G$ if and only if 
\begin{equation*}
x = (s+2q) a - 3 q + 2 r,
\end{equation*}
where $\{q, r, s \} \subseteq \N$ with $0 \leq r \leq q$.
\item We have that
\begin{displaymath}
\begin{array}{ll}
I_{i,k} < I_{i+1,k} & \text{for any $i \in [0,2k]$.}\\
\end{array}
\end{displaymath}
\item $G = H_{9,k}$.
\item $H_{9,k}$ is a $3$-permutation semigroup.
\end{enumerate}
\end{lemma}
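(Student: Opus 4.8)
The plan is to handle the four assertions in order, with part (1) doing the heavy lifting. For part (1), rather than working with all three generators, I would note that $c = b+2$, so $\{b,c\}$ is a two-term arithmetic progression with first term $b$ and step $2$; Lemma \ref{lem_arith_seq} then gives $sb+tc = qb+2r$ with $0 \le r \le q$. Writing a general element as $x = \alpha a + sb + tc$ and substituting $b = 2a-3$ collapses this to $x = (\alpha+2q)a - 3q + 2r$, i.e.\ the stated form with $s := \alpha$. Part (2) is a one-line endpoint comparison: the maximum of $I_{i,k}$ is $ia$ while the minimum of $I_{i+1,k}$ is $(i+1)a - 3\lfloor(i+1)/2\rfloor$, and their difference $a - 3\lfloor(i+1)/2\rfloor \ge a - 3k = 2$ is positive for every $i \in [0,2k]$.

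The core of the proof is part (3). Fixing the index $i := s+2q$ (so that $0 \le q \le \lfloor i/2\rfloor$, since $s = i-2q \ge 0$), I would show that the set of attainable offsets $x - ia = -3q+2r$ subject to $0 \le r \le q$ is exactly $\{-3\lfloor i/2\rfloor\} \cup [-3\lfloor i/2\rfloor+2, 0]$, namely every integer from $-3\lfloor i/2\rfloor$ up to $0$ except the single value $-3\lfloor i/2\rfloor+1$. That $0$ is attained and that $-3\lfloor i/2\rfloor$ is the minimum are immediate; the delicate points are that $-3\lfloor i/2\rfloor+1$ is \emph{never} attainable (writing the defining equation as $3(\lfloor i/2\rfloor - q) = 1 - 2r$, the left side is a nonnegative multiple of $3$ while the right side is odd and at most $1$, so no solution exists) and that every other value $d$ in the range \emph{is} attainable, which I would settle by solving $r = (3q+d)/2$ and exhibiting an admissible $q$ of the correct parity in $[\lceil -d/3\rceil, \min(-d, \lfloor i/2\rfloor)]$. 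This offset description is precisely the shape defining $I_{i,k}$, which gives $I_{i,k} \subseteq G$ and, conversely, places each $x \in G$ into the correct $I_{i,k}$ once one knows its index is at most $2k+1$. To close $G = H_{9,k}$ I would verify that the $a$ consecutive integers $[2ka+4, (2k+1)a+3]$ all lie in $G$ (the lower portion is $I_{2k+1,k}$, the top three coming from representations of index $2k+2$) and then translate by the generator $a$; the same size estimate shows that any representation of index $i \ge 2k+2$ has $x \ge 2ka+4$, so no element below $2ka+4$ escapes the blocks. This also yields $F(G) = 2ka+3$ in passing.

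Part (4) is where I expect the real difficulty, because the blocks are \emph{not} individually concatenations of $3$-permutations. The clean inputs are $|I_{1,k}| = 1$, $|I_{i,k}| = 3\lfloor i/2\rfloor \equiv 0 \Mod{3}$ for $i \ge 2$, and the explicit residues: the isolated point of $I_{i,k}$ reduces to $\overline{2i}$, after which the consecutive run cycles through $(\overline{2i+2}, \overline{2i}, \overline{2i+1})$. The lone leading generator $a$ (which reduces to $\overline{2}$) shifts the grouping by one position, so the $3$-permutations must be read off straddling the block boundaries: I would package each boundary as the triple formed by the last element of $I_{i,k}$ together with the first two elements of $I_{i+1,k}$, whose residues $(\overline{2i}, \overline{2i+2}, \overline{2i+1})$ run over all of $\Z/3\Z$, while the interior positions of each block split into triples with residues $(\overline{2i}, \overline{2i+1}, \overline{2i+2})$. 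The leading $a$ opens the first straddling triple $(\overline{2}, \overline{1}, \overline{0})$ together with $I_{2,k}$, and at the top the block $I_{2k+1,k} = \{2ka+2\} \cup [2ka+4, (2k+1)a]$ merges into the tail $[2ka+4, \infty[$, whose residues cycle as $(\overline{k+2}, \overline{k}, \overline{k+1})$ and continue the pattern indefinitely. The main obstacle is checking that this straddling decomposition is globally consistent — that the element counts line up so that no triple is left broken at the initial singleton, at any interior boundary, or at the junction with the infinite tail.
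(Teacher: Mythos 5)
Your proposal is correct and follows essentially the same route as the paper: part (1) via Lemma \ref{lem_arith_seq} applied to the step-$2$ pair $\{b,c\}$, part (3) by matching the attainable offsets $-3q+2r$ against the shape of $I_{i,k}$ and closing the tail with $a$ consecutive elements to bound the Ap\'ery set, and part (4) by the same straddling decomposition (last element of $I_{i,k}$ with the first two of $I_{i+1,k}$, interiors as consecutive runs, singleton $a$ opening the first triple, and the infinite tail continuing the pattern). Your explicit non-attainability argument for the offset $-3\lfloor i/2\rfloor+1$ and the size estimate ruling out indices $i\ge 2k+2$ below $2ka+4$ are slightly more careful than the paper's write-up, but they are refinements of the same argument, not a different one.
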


\begin{proof}
\begin{enumerate}[leftmargin=*]
\item The assertion follows from Lemma \ref{lem_arith_seq}.
\item The inequality follows from the definition of the sets $I_{i,k}$.
\item Let $x \in I_{i,k}$ for some $i$.

If $h$ is an integer such that $0 \leq h \leq 3 \left\lfloor \frac{i}{2} \right\rfloor -2$, then we can write
\begin{equation*}
-3 \left\lfloor \frac{i}{2} \right\rfloor + 2 + h = - 3 q + 2 r
\end{equation*}
for some $\{q, r \} \subseteq \N$ such that
\begin{equation*}
0 \leq q \leq \left\lfloor \frac{i}{2} \right\rfloor \quad \text{ and } \quad 0 \leq r \leq 1.
\end{equation*}
Therefore $x = (s+2q) a - 3 q + 2 r$, where $s:=i - 2 q$.

Now we notice that
\begin{align*}
I_{2k,k} & = \{(2k-1)a + 2 \} \cup [(2k-1)a+4, 2ka],\\
I_{2k+1,k} & = \{2ka + 2 \} \cup [2ka+4, (2k+1)a],
\end{align*}
and
\begin{align*}
(2k+1)a + 1 & = (2k-1)a + 2 + c,\\
(2k+1)a + 2 & = (2k-1)a + 5 + b,\\
(2k+1)a + 3 & = (2k-1)a + 4 + c.
\end{align*}
Therefore $\Ap(G,a) \subseteq [a,(2k+1)a+3]$ and consequently $[2ka + 4, \infty[ \subseteq G$.

If we take $x \in G$, namely
\begin{equation*}
x = (s+2q) a - 3 q + 2 r,
\end{equation*}
where $\{q, r, s \} \subseteq \N$ with $0 \leq r \leq q$, then $x \in I_{i,k}$, where $i := s + 2 q$. In fact, if $s = 0$, then $q = \frac{i}{2}$, while $q \leq \left\lfloor \frac{i}{2} \right\rfloor$ if $s > 0$. Moreover, in any case we have that
\begin{equation*}
-3 q + 2 r \in \left\{ - 3 \left\lfloor \frac{i}{2} \right\rfloor \right\} \cup \left[ - 3 \left\lfloor \frac{i}{2} \right\rfloor+2, 0 \right].
\end{equation*}

Therefore $G \subseteq H_{9,k}$.

\item For any $i \in [1,2k]$ we have that
\begin{displaymath}
|I_{i,k}| \equiv 
\begin{cases}
1 \Mod{3} & \text{if $i=1$;}\\
0 \Mod{3} & \text{if $i \not = 1$.}
\end{cases}
\end{displaymath}

The sequence formed by the greatest element of $I_{i,k}$ and the two smallest elements of $I_{i+1,k}$ reads as follows (modulo $3$):
\begin{equation*}
(\overline{i a}, \overline{(i+1)a}, \overline{(i+1)a + 2}) = (\overline{2i}, \overline{2i+2}, \overline{2i+1}).
\end{equation*}
Moreover the sequence of the remaining elements in $\overline{g \cap I_{i,k}}$ is given by a concatenation of $3$-permutations for any $i$. 

Finally we notice that the two smallest elements of $I_{2k+1,k}$ (see above) are $2ka+2$ and $2k a+4$ and the elements in $[2 k a + 5, \infty[$ can be obtained (modulo $3$) through infinitely many concatenations of $3$-permutations. Therefore $H_{9,k}$ is a $3$-permutation semigroup.
\end{enumerate} 
\end{proof}

\begin{lemma}\label{family_10}
Let $k$ be a positive integer, $a:=6k+3$ and $t := k - 1$.

Let $S := \left\{a, a + 2, 2 a - 2 \right\}$ and
\begin{equation*}
H_{10,k} := \{ 0 \} \cup (\cup_{i=0}^{t} (A_{i,k} \cup B_{i,k}) )  \cup (\cup_{i=t+1}^{2k} (C_{i,k} \cup D_{i,k}) ) \cup E,
\end{equation*}
where
\begin{align*}
A_{i,k} & := \left[(1 + 2 i) a - 2 i, (1 +2i) a - 2 i + (6i+2)  \right]_2,\\
B_{i,k} & := \left[(2 + 2 i) a - 2 (i+1), (2 + 2 i) a - 2 (i+1) + (6i+6) \right]_2,
\end{align*}
for any $i \in [0,t]$,
\begin{align*}
C_{i,k} := & [(1+2i) a - 2 i, (1+2i) a - 2 i + 6 (i-t-1) - 2] \\
& \cup  \left[(1 + 2 i) a - 2 i + 6 (i-t-1), (1 + 2 i) a - 2 i + 6 k  \right]_2,\\
D_{i,k} := & [(2+2i) a - 2 (i+1), (2+2i) a - 2 (i+1) + 6 (i-t-1) + 2] \\
& \cup  \left[(2 + 2 i) a - 2 (i+1) + 6 (i-t-1) + 4, (2 + 2 i) a - 2 (i+1) + (6k+2)  \right]_2,\\
\end{align*}
for any $i \in [t+1,2k]$, while
\begin{equation*}
E:=\left[(2+4k) a - 2 (2k+1), \infty \right[.
\end{equation*}

Then the following hold.

\begin{enumerate}
\item $x \in G$ if and only if 
\begin{equation*}
x = (q + 2 u) a + 2 r - 2 u,
\end{equation*}
where $\{q, r, u \} \subseteq \N$ and $0 \leq r \leq q$.
\item $G = H_{10,k}$.
\item The following inequalities hold:
\begin{displaymath}
\begin{array}{ll}
A_{i,k} < B_{i,k} & \text{for any $i \in [0,t]$,}\\
B_{i,k} < A_{i+1,k} & \text{for any $i \in [0,t-1]$,}\\
B_{t,k} < C_{t+1,k},\\
C_{i,k} < D_{i,k} & \text{for any $i \in [t+1,2k]$,}\\
D_{i,k} < C_{i+1,k} & \text{for any $i \in [t+1, 2k-1]$,}\\
D_{2k,k} < E.
\end{array}
\end{displaymath}
\item $H_{10,k}$ is a $3$-permutation semigroup.
\end{enumerate}
\end{lemma}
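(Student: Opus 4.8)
The plan is to mirror the structure of the proofs of Lemmas \ref{family_5}, \ref{family_8} and \ref{family_9}, establishing the four assertions in turn. For (1) I start from $x = p a + s(a+2) + u(2a-2)$ with $\{p,s,u\}\subseteq\N$; since $pa+s(a+2)\in\langle a, a+2\rangle$, Lemma \ref{lem_arith_seq} applied with $a_0=a$, $e=2$ and $k=1$ rewrites this summand as $qa+2r$ with $0\le r\le q$, so that $x=(q+2u)a+2r-2u$, and conversely every such integer lies in $G$. For (2) I argue by double inclusion. The inclusion $H_{10,k}\subseteq G$ is verified set by set: a generic element of $A_{i,k}$, $B_{i,k}$, $C_{i,k}$ or $D_{i,k}$ is put into the normal form of (1) by reading off $q,r,u$ from its position in the defining interval, while for the ray $E$ I compute $\Ap(G,a)$ and check, as in Lemma \ref{family_9}, that $\max\Ap(G,a)-a=F(G)$ is exactly one less than the left endpoint of $E$, whence $E\subseteq G$. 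The reverse inclusion follows by taking $x=(q+2u)a+2r-2u$ and placing it in the appropriate set according to the parity of the coefficient of $\tfrac a2$ and the sizes of $q,r,u$, exactly as $H_{10,k}$ was assembled. Assertion (3) is a direct comparison of the endpoints of consecutive sets, recorded one by one.

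The heart of the matter is (4). First I compute the cardinalities modulo $3$. A short count gives $|A_{i,k}|=3i+2$ and $|B_{i,k}|=3i+4$, so $|A_{i,k}\cup B_{i,k}|=6i+6\equiv0\Mod 3$; and for $i\ge t+2$ one has $|C_{i,k}|=3i$ and $|D_{i,k}|=3i+3$, so $|C_{i,k}\cup D_{i,k}|=6i+3\equiv0\Mod 3$. The single exception is the first pair of the second phase: because its initial interval is empty, $|C_{t+1,k}|=3k+1$, whence $|C_{t+1,k}\cup D_{t+1,k}|=6k+4\equiv1\Mod 3$. I then use the residues $a\equiv0$, $a+2\equiv2$ and $2a-2\equiv1\Mod 3$. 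Within any set the elements form one or two maximal arithmetic runs of common difference $1$ or $2$, and by the elementary fact recalled in the Preliminaries any three consecutive elements of such a run reduce modulo $3$ to a $3$-permutation; hence every block of the partition of $\overline{g}$ into consecutive triples that lies inside a single run is automatically a $3$-permutation, and only the blocks straddling a junction need to be inspected.

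It remains to check the straddling triples, which is where the work lies. For each run-to-run or set-to-set junction I exhibit the residues of the two elements on one side together with the adjacent element on the other; a typical instance is the two greatest elements of $A_{i,k}$ followed by the smallest element of $B_{i,k}$, whose residues are $i,\ i+2,\ i+1\Mod 3$, that is, a permutation of $\{\bar0,\bar1,\bar2\}$, and the remaining junctions exhibit the same pattern of three consecutive residues. The main obstacle is the bookkeeping forced by the exceptional pair $C_{t+1,k}\cup D_{t+1,k}$: since its length is $\equiv1\Mod 3$, the whole second phase has total length $(3k+2)^2\equiv1\Mod 3$, so a shift of one position propagates through every junction of the second phase and, finally, into the ray $E$. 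This causes no difficulty, because $E$ consists of consecutive integers and therefore contributes a $3$-permutation for every triple irrespective of the offset; once the single triple straddling the junction $D_{2k,k}\to E$ has been checked, the decomposition of $\overline{g}$ into $3$-permutations is complete and $H_{10,k}$ is a $3$-permutation semigroup.
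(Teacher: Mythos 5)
Your proposal follows the paper's own proof essentially step for step: the same normal form $(q+2u)a+2r-2u$ obtained from Lemma \ref{lem_arith_seq}, the same double inclusion with an Ap\'ery-set bound to absorb the ray $E$, and the same cardinality-modulo-$3$ bookkeeping for assertion (4), including the correct identification of the offset created by $|C_{t+1,k}|=3k+1$ and the check of the straddling residue triples at each junction. The only quibble is your claim that $F(G)$ equals one less than the left endpoint of $E$: the Ap\'ery computation only yields $\Ap(G,a)\subseteq[a,(2+4k)a+2k]$ and hence $F(G)\le(1+4k)a+2k$, and in fact $(1+4k)a+2k$ is the greatest element of $C_{2k,k}$ and so lies in $G$, but the upper bound is all that is needed to conclude $E\subseteq G$, exactly as in the paper.
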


\begin{proof}
\begin{enumerate}[leftmargin=*]
\item We have that $x \in G$ if and only if $x = s a + t (a+2) + u (2a-2)$ for some  $\{s, t, u \} \subseteq \N$.

According to Lemma \ref{lem_arith_seq} we can write
\begin{align*}
x & = q a + 2 r + u (2a -2) = (q +2 u) a + 2 r - 2 u
\end{align*}
for some $\{q, r \} \subseteq \N$ with $0 \leq r \leq q$.

\item Let $x \in A_{i,k}$ for some $i$.  Then
\begin{equation*}
x = (1+2i) a - 2 i + 2 h
\end{equation*}
for some $h \in [0,3i+1]$.

If $x < (1+2i) a$, then $-2 i +2 h < 0$. In such a case we can write 
\begin{equation*}
-2i+2h = -2 \tilde{i} + 2 \tilde{h},
\end{equation*}
where $\tilde{i} \in [1,i]$ and $\tilde{h} \in [0,1]$. Therefore
\begin{align*}
x & = (1+2i) a - 2 \tilde{i} + 2 \tilde{h} = (1 +2 (i - \tilde{i}) + 2 \tilde{i}) a + 2 \tilde{h} - 2 \tilde{i},
\end{align*}
namely $x \in G$.

If $x \geq (1 +2 i) a$, then $x \in G$, since
\begin{equation*}
x = (q+2u) a + 2 r - 2u,
\end{equation*}
where
\begin{align*}
q & := 1 + 2 i,\\
u & := 0,\\
r & := h - i \leq 3 i + 1 - i = q.
\end{align*}

If $x \in B_{i,k}, C_{i,k}$ or $D_{i,k}$, then we can show that $x \in G$ using an argument as above.

Now we notice that $D_{k,k}$ contains all integers in the interval 
\begin{equation*}
[(2+4k)a - 2 (2k+1), (2+4k)a + 2k].
\end{equation*}
Since in such an interval there are $6 k + 3$ integers, we can say that $\Ap(G,a) \subseteq [a,(2+4k)a + 2k]$. Therefore $E \subseteq G$.

Vice versa, let $x \in G$. Then 
\begin{equation*}
x = (q + 2 u) a + 2 r - 2 u,
\end{equation*}
where $\{q, r, u \} \subseteq \N$ with $0 \leq r \leq q$. 

If $q$ is odd, then $q = 1 + 2 \tilde{q}$ for some $\tilde{q} \in \N$ and
\begin{align*}
x & = (1 + 2 (u + \tilde{q})) a + 2 r - 2 u\\
& = (1 + 2 (u+\tilde{q})) a  - 2 (u + \tilde{q}) + 2 (r + \tilde{q}).
\end{align*}

Before dealing with some different cases, we notice that
\begin{equation*}
r + \tilde{q} \leq q + \tilde{q} = 1 + 3 \tilde{q} \leq 1 + 3 (u+ \tilde{q}).
\end{equation*}

Let $i:= u + \tilde{q}$.
\begin{itemize}
\item If $0 \leq i \leq t$, then $x \in A_{i,k}$.

\item If $t+1 \leq i \leq 2 k$ and $i \leq k$, then $x \in C_{i,k}$.

\item If $t+1 \leq i \leq 2 k$ and $i > k$, then
\begin{align*}
x & \leq (1+2i) a - 2 i + 2 (1+3i) \\
& = (2 + 2i) a - 2 (i+1) - a + 2 (2+3i)
\end{align*}
with
\begin{equation*}
-a + 2 (2 + 3i) \leq 6 (i-t-1) +2.
\end{equation*}
Therefore $x \in D_{i,k}$.

\item If $i \geq 2k+1$, then 
\begin{align*}
x & \geq (1+2i) a - 2 i = 2i (a-1) + a\\
&  \geq (4k+2) (a-1) + a \geq (4k+2) (a-2) \\
& = (2 + 4k) a - 2 (2k+1).
\end{align*}
Therefore $x \in E$.

The proof for $q$ even can be done in a similar way to the odd case.

\end{itemize}

\item All inequalities follow from the definition of the sets.

\item For any $i \in [0,t]$ we have that
\begin{align*}
|A_{i,k}| & = 3 i +2 ,\\
|B_{i,k}| & = 3 i + 4,\\
|A_{i,k} \cup B_{i,k}| & \equiv 0 \Mod{3}.
\end{align*}
The sequence formed by the greatest two elements of $A_{i,k}$ and the smallest element of $B_{i,k}$ reads as follows (modulo $3$):
\begin{equation*}
(\overline{-2 i + 2 (3i)}, \overline{-2 i + 2 (3i+1)}, \overline{-2 (i+1)}) = (\overline{-2 i}, \overline{-2 i +2}, \overline{-2i +1}).
\end{equation*}
This latter is a $3$-permutation and the sequences of the remaining elements of $\overline{g \cap A_{i,k}}$ and $\overline{g \cap B_{i,k}}$ are obtained through concatenations of $3$-permutations.

As regards the sets $C_{i,k}$ and $D_{i,k}$, we have that
\begin{align*}
|C_{t+1,k}| & = 3 k + 1,\\
|D_{t+1,k}| & = 3 k + 3,\\ 
\end{align*}
while 
\begin{align*}
|C_{i,k}| & = 3(i-t-1) + 3 k,\\
|D_{i,k}| & = 3(i-t-1) + 3 k + 3,
\end{align*}
for any $i \in [t+2,2k]$.

If $i \in [t+1,2k]$, then the sequence formed by the greatest element of $C_{i,k}$ and the two smallest elements of $D_{i,k}$ reads as follows (modulo $3$):
\begin{equation*}
(\overline{-2i}, \overline{-2i+1}, \overline{-2i+2}).
\end{equation*} 
 
If $i \in [t+1,2k-1]$, then the sequence formed by the greatest element of $D_{i,k}$ and the two smallest elements of $C_{i+1,k}$ reads as follows (modulo $3$):
\begin{equation*}
(\overline{-2i}, \overline{-2i+1}, \overline{-2i+2}).
\end{equation*}  

Finally, the sequence formed by the greatest element of $D_{2k,k}$ and the two smallest elements of $E$ reads as follows (modulo $3$):
\begin{equation*}
(\overline{2k}, \overline{2k+1}, \overline{2k+2}).
\end{equation*}
Therefore $H_{10,k}$ is a $3$-permutation semigroup.
\end{enumerate}
\end{proof}

\begin{lemma}\label{family_11}
Let  $k$ be a positive integer, $a:=6k+4$ and $t := \frac{a-2}{2}$.

Let $S := \left\{a, b, c \right\}$, where $b:=a + 1$ and $c := a + \frac{a}{2}$.

Let
\begin{equation*}
H_{11,k} := (\cup_{i=0}^{t} (A_{i,k} \cup B_{i,k}) )  \cup C,
\end{equation*}
where $A_{0,k} := \{ 0 \}$, $B_{0,k} := \emptyset$, while
\begin{align*}
A_{i,k} & := \left[i a, i a + i \right],\\
B_{i,k} & := A_{i-1,k} + \left\{ c \right\},
\end{align*}
for any $i \in [1,t]$, and
\begin{equation*}
C := [(t+1)a, \infty[.
\end{equation*}
Then the following hold.
\begin{enumerate}
\item $x \in G$ if and only if 
\begin{equation*}
x = (q+u) a + u \cdot \frac{a}{2} + r,
\end{equation*}
where $\{q, r, u \} \subseteq \N$ with $0 \leq r \leq q$.
\item $G = H_{11,k}$.
\item The following inequalities hold:
\begin{displaymath}
\begin{array}{ll}
A_{i,k} < B_{i,k} & \text{for any $i \in [0,t]$,}\\
B_{i,k} < A_{i+1,k} & \text{for any $i \in [0,t-1]$,}\\
B_{t,k} < C.
\end{array}
\end{displaymath}
\item $H_{11,k}$ is a $3$-permutation semigroup.
\end{enumerate}
\end{lemma}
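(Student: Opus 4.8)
The plan is to follow the template of Lemmas \ref{family_8} and \ref{family_10}, exploiting that the third generator $c=a+\frac a2$ satisfies $2c=3a$ (indeed $2(9k+6)=18k+12=3(6k+4)$). For part (1), since $b=a+1$ the pair $\{a,b\}$ is an arithmetic progression of common difference $1$, so Lemma \ref{lem_arith_seq} gives $\langle a,b\rangle=\{qa+r:0\le r\le q\}$; as $G=\langle a,b,c\rangle$, every element of $G$ is obtained by adjoining a nonnegative multiple $uc=u(a+\frac a2)$ to an element of $\langle a,b\rangle$, which produces exactly the stated form $x=(q+u)a+u\cdot\frac a2+r$ with $0\le r\le q$, and conversely every such expression lies in $G$. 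For part (3), all the displayed inequalities are immediate from the definitions once one substitutes $\frac a2=3k+2$ and $t=3k+1$: one checks $ia+i<ia+\frac a2$, that $ia+\frac a2+(i-1)<(i+1)a$, and that $ta+\frac a2+(t-1)<(t+1)a$.

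For part (2) I would prove both inclusions. The inclusion $H_{11,k}\subseteq G$ is routine: each $A_{i,k}=[ia,ia+i]$ consists of elements $ia+r$ with $0\le r\le i$, hence lies in $\langle a,b\rangle\subseteq G$ by (1), and each $B_{i,k}=A_{i-1,k}+\{c\}$ lies in $G$ since $A_{i-1,k}\subseteq G$ and $c\in G$. The remaining point is $C=[(t+1)a,\infty[\subseteq G$: by (3) the union $A_{t,k}\cup B_{t,k}$ is the block of $a-1$ consecutive integers $[ta,(t+1)a-2]$, which together with the multiples of $a$ realises every residue class modulo $a$ except $a-1$; for that last class one verifies directly from (1) (taking $u=1$, $q=r=3k+1$) that $(t+2)a-1\in G$. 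Consequently each integer $x\ge(t+1)a$ is congruent modulo $a$ to some element of $G$ not exceeding it, so $x\in G$, and $F(G)=(t+1)a-1$. For $G\subseteq H_{11,k}$, the relation $2c=3a$ lets me assume $u\in\{0,1\}$ in the expression of (1): if $u=0$ then $x=qa+r\in A_{q,k}$ when $q\le t$ and $x\in C$ otherwise, while if $u=1$ then $x=(q+1)a+\frac a2+r\in B_{q+1,k}$ when $q+1\le t$ and $x\in C$ otherwise.

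Part (4) is the technical heart. Since $a\equiv1$ and $c\equiv0\Mod{3}$, inside each interval $A_{i,k}$ (resp. $B_{i,k}$) the residues increase by $1$ from $\overline{i}$ (resp. from $\overline{i-1}$), and $|A_{i,k}|=i+1$, $|B_{i,k}|=i$, so $|A_{i,k}\cup B_{i,k}|=2i+1$. I would group the indices into triples $(i_1,i_2,i_3)$ with $i_j\equiv j\Mod{3}$, noting $\sum(2i_j+1)\equiv0\Mod{3}$. A short phase computation then shows that $A_{i_1,k}\cup B_{i_1,k}$ (size $2i_1+1\equiv0$) is already a concatenation of $3$-permutations, its only non-consecutive triple being the last two elements of $A_{i_1,k}$ together with the first of $B_{i_1,k}$, which reads $(\overline{1},\overline{2},\overline{0})$; likewise $A_{i_2,k}\cup B_{i_2,k}\cup A_{i_3,k}\cup B_{i_3,k}$ has cardinality $\equiv0$ with its single non-consecutive triple being the last two elements of $B_{i_2,k}$ and the first of $A_{i_3,k}$, again $(\overline{1},\overline{2},\overline{0})$, every other triple consisting of three consecutive integers. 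Since $t=3k+1\equiv1\Mod{3}$, the final block $A_{t,k}\cup B_{t,k}$ is self-contained and leaves the running count a multiple of $3$, so the tail $C$ contributes the infinite periodic pattern $(\overline{2},\overline{0},\overline{1})$ repeated; the single omitted integer $(t+1)a-1$ is absent from $g$ and therefore does not disturb the count.

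The main obstacle is precisely this last part: keeping track of how the $3$-permutation ``phase'' propagates across the interval boundaries, because the individual blocks $A_{i,k}\cup B_{i,k}$ do not all have cardinality divisible by $3$, so the $3$-permutations straddle the junctions between consecutive sets rather than coinciding with them.
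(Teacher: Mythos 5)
Your proposal is correct and follows essentially the same route as the paper: part (1) via Lemma \ref{lem_arith_seq} applied to $\langle a,b\rangle$, part (2) by reducing $u$ modulo $2$ using $2c=3a$ and bounding the Ap\'ery set of $a$, and part (4) by grouping the indices into triples $i_j\equiv j\Mod{3}$, with the two straddling $3$-permutations $(\overline{1},\overline{2},\overline{0})$ occurring exactly at the $A_{i_1}/B_{i_1}$ and $B_{i_2}/A_{i_3}$ junctions and the leftover index $t\equiv 1\Mod 3$ handled as a self-contained block. The only differences are cosmetic (e.g.\ you cover the residue class $a-1$ of $\Ap(G,a)$ directly via $(t+2)a-1$ rather than via $(A_{t,k}\cup B_{t,k})+\{a,b\}$), and you even avoid the paper's harmless off-by-one in the index of $B_{q+3v,k}$.
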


\begin{proof}
\begin{enumerate}[leftmargin=*]
\item We have that $x \in G$ if and only if 
\begin{equation*}
x = s a + t b + u c
\end{equation*}
for some $\{s, t, u \} \subseteq \N$. According to Lemma \ref{lem_arith_seq} we can write 
\begin{align*}
x & = q a + r + u \left( a +\frac{a}{2} \right) = (q + u) a + u \cdot \frac{a}{2} + r
\end{align*}
for some  $\{q, r, u \} \subseteq \N$ with $r \in [0,q]$.

\item If  $x \in A_{i,k}$ for some $i \in [0,t]$, then $x \in \langle a, b \rangle \subseteq G$ according to Lemma \ref{lem_arith_seq}. 

If $x \in B_{i,k}$ for some $i \in [0,t]$, then $x \in G$ by the definition of the set $B_{i,k}$. 

Now we notice that
\begin{align*}
A_{t,k} & = \left[ta, ta+ \frac{a}{2} - 1 \right],\\
B_{t,k} & = \left[ ta + \frac{a}{2}, (t+1)a -2 \right].
\end{align*}
Therefore 
\begin{equation*}
[(t+1)a, (t+2)a - 1] = (A_{t,k} \cup B_{t,k}) + \left\{ a, b \right\} \subseteq G,
\end{equation*}
namely $\Ap(G,a) \subseteq [a,(t+2)a-1]$ and consequently $C \subseteq G$.

Vice versa, let $x \in G$.

If $x \geq (t+1)a$, then $x \in C$.

If $x < (t+1) a$, then we can write 
\begin{equation*}
x = (q+u) a + u \cdot \frac{a}{2} + r,
\end{equation*}
where $\{q, r, u \} \subseteq \N$ and $0 \leq r \leq q$.

If $u$ is odd, then $u = 2v +1$ for some $v \in \N$. Therefore
\begin{align*}
x & = (q + 2 v + 1) a + (2 v +1) \frac{a}{2} + r = (q + 3v + 1) a + \frac{a}{2} + r.
\end{align*} 
Hence $x \in B_{q+3v,k}$.

If $u$ is even, then we can show as above that $x \in A_{q+3v,k}$, where $v := \frac{u}{2}$.

\item All inequalities follow immediately by the definition of the sets.

\item For any $i \in [0,t]$ we have that
\begin{align*}
|A_{i,k}| & = i +1,\\
|B_{i,k}| & = i.
\end{align*}

Let $i_1, i_2$ and $i_3$ be three consecutive positive integers with $i_{j} \equiv j \Mod{3}$ for any $j \in \{1, 2, 3 \}$. Then 
\begin{equation*}
|\cup_{j=1}^3 (A_{i_j} \cup B_{i_j})| \equiv 0 \Mod{3}.
\end{equation*}

The sequence formed by the greatest two elements of $A_{i_1,k}$ and the smallest element of $B_{i_1,k}$ reads as follows (modulo $3$):
\begin{equation*}
(\overline{i_1 a + i_1 - 1}, \overline{i_1 a + i_1}, \overline{(i_1-1)a}) = (\overline{1}, \overline{2}, \overline{0}).
\end{equation*}
The sequences of the other elements of $\overline{g \cap A_{i_1,k}}$ and $\overline{g \cap B_{i_1,k}}$ are obtained through concatenations of $3$-permutations.

The sequence formed by the greatest two elements of $B_{i_2,k}$ and the smallest element of $A_{i_3,k}$ reads as follows (modulo $3$):
\begin{equation*}
\left(\overline{i_1 a + i_1 - 1 + c}, \overline{i_1 a + i_1 + c}, \overline{i_3 a} \right) = (\overline{1}, \overline{2}, \overline{0}).
\end{equation*}
The sequences of the remaining elements of $g$ belonging  $A_{i_2,k}$, $B_{i_2, k }$, $A_{i_3,k}$ and $B_{i_3,k}$ are obtained (modulo $3$) through concatenations of $3$-permutations.

Finally we notice that $t \equiv 1 \Mod{3}$. As proved above, we have that $\overline{g \cap (A_{t,k} \cup B_{t,k})}$ is given by a concatenation of $3$-permutations.
\end{enumerate}
\end{proof}

\begin{lemma}\label{family_12}
Let $k$ be a positive integer and $a:=12k+4$.

Let $S := \left\{a, b, c \right\}$, where $b := a + \frac{a}{2} - 3$ and $c := a + \frac{a}{2} -1$.

Let
\begin{equation*}
H_{12,k} := (\cup_{i=0}^{3k} (A_{i,k} \cup B_{i,k}) )  \cup (\cup_{j=1}^{3k+1} (C_{j,k} \cup D_{j,k}) ) \cup E,
\end{equation*}
where $A_{0,k} := \{ 0 \}$,  $B_{0,k} := \emptyset$, and
\begin{align*}
A_{i,k} & := \left[ i a - 6 \left\lfloor \frac{i}{3} \right\rfloor, i a \right]_2,\\
B_{i,k} & := \left[ i a + \frac{a}{2} - 3 - 6 \left\lfloor \frac{i-1}{3} \right\rfloor, i a + \frac{a}{2} - 1   \right]_2,
\end{align*}
for any $i \in \left[1,3k \right]$, while
\begin{align*}
C_{1,k} & := \left[ (3k+1) a - 6 k,  (3k+1) a - 4 \right]_2 \cup [(3k+1) a - 2, (3k+1) a],\\
C_{2,k} & := \left[ (3k+2) a - 6 k,  (3k+2) a - 4 \right]_2 \cup [(3k+2) a - 2, (3k+2) a],
\end{align*}
and
\begin{align*}
C_{j,k}  := & \left[(3 k + j) a - (6k+2), (3 k + j) a - 4 - 6 \left\lfloor \frac{j-1}{3} \right\rfloor  \right]_2 \\
& \cup \left[(3k+j) a - 2 - 6 \left\lfloor \frac{j-1}{3} \right\rfloor,  (3 k + j) a  \right]
\end{align*}
for any $j \in [3,3k+1]$, while
\begin{align*}
D_{j,k}  := & \left[(3 k + j) a + \frac{a}{2} + 1 - (6k+2),(3 k + j) a + \frac{a}{2} - 1  - 6 \left\lfloor \frac{j+1}{3} \right\rfloor  \right]_2 \\
& \cup \left[(3k+j) a + \frac{a}{2} + 1 - 6 \left\lfloor \frac{j+1}{3} \right\rfloor,  (3 k + j) a + \frac{a}{2} - 1 \right]
\end{align*}
for any $j \in [1,3k+1]$ and
\begin{equation*}
E := [(6k+1) a + 3, \infty[.
\end{equation*}

Then the following hold.
\begin{enumerate}
\item $x \in G$ if and only if 
\begin{equation*}
x = (s + q) a + q \cdot \frac{a}{2}  - 3 q + 2 r,
\end{equation*}
where $\{s, q, r \} \subseteq \N$ with $0 \leq r \leq q$.
\item $G = H_{12,k}$.
\item The following inequalities hold:
\begin{displaymath}
\begin{array}{ll}
A_{i,k} < B_{i,k} & \text{for any $i \in [1,3k]$,}\\
B_{i,k} < A_{i+1,k} & \text{for any $i \in [1,3k-1]$,}\\
B_{3k,k} < C_{1,k},\\
C_{j,k} < D_{j,k} & \text{for any $j \in [1,3k+1]$,}\\
D_{j,k} < C_{j+1,k} & \text{for any $j \in [1, 3k]$,}\\
D_{3k+1,k} < E.
\end{array}
\end{displaymath}
\item $H_{12,k}$ is a $3$-permutation semigroup.
\end{enumerate}
\end{lemma}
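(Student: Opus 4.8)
The plan is to mirror the arguments of Lemmas \ref{family_8}, \ref{family_10} and \ref{family_11}: the generating set again consists of the small generator $a$ together with the two generators $b = a + \frac{a}{2} - 3$ and $c = a + \frac{a}{2} - 1$ lying near $\frac{3}{2}a$ and differing by $2$. For part (1) I would observe that $\{b,c\}$ is the arithmetic progression with first term $b$, common difference $2$ and length $2$, so by Lemma \ref{lem_arith_seq} every element of $\langle b, c \rangle$ is $b q + 2 r$ with $0 \le r \le q$. Writing a general element of $G$ as $x = s a + (b q + 2 r)$ and substituting $b = a + \frac{a}{2} - 3$ yields $x = (s+q) a + q \cdot \frac{a}{2} - 3 q + 2 r$ with $0 \le r \le q$, and conversely every such expression lies in $G$; this is the routine opener.

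Parts (3) and (2) are handled exactly as in the previous lemmas. The inequalities in (3) compare the right endpoint of each block with the left endpoint of its successor and follow at once from the defining intervals, the floor terms only shrinking the blocks. For (2) I would prove the two inclusions separately. To show $H_{12,k} \subseteq G$ I rewrite a generic element of each block in the parametric form from (1): the blocks $A_{i,k}$ and $C_{j,k}$ use an even value of $q$ (so the $\frac{a}{2}$-terms recombine into multiples of $a$), while $B_{i,k}$ and $D_{j,k}$ use an odd value of $q$ to pick up the extra $\frac{a}{2}$-shift, with $r$ sweeping the block. For the reverse inclusion $G \subseteq H_{12,k}$ I split $x = (s+q) a + q \cdot \frac{a}{2} - 3 q + 2 r$ according to the parity of $q$: if $q$ is even the half-integer term merges into the $a$-count and $x$ lands in some $A_{i,k}$ or $C_{j,k}$, while if $q$ is odd a leftover $\frac{a}{2}$ places $x$ in some $B_{i,k}$ or $D_{j,k}$, the index being read off from the size of $x$. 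Sufficiently large $x$ are absorbed into $E$; to justify this I would exhibit, as in the Ap\'ery-set computation of Lemma \ref{family_10}, a run of $a$ consecutive integers produced from the top $C$ and $D$ blocks by additions of $a$, which forces $F(G) = (6k+1)a + 2$ and hence $[(6k+1)a + 3, \infty[ \, \subseteq G$.

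The substance of the lemma is part (4). The floor functions $\lfloor i/3 \rfloor$, $\lfloor (i-1)/3 \rfloor$ and $\lfloor (j \pm 1)/3 \rfloor$ are engineered precisely so that the block cardinalities are controlled modulo $3$. A short computation gives $|A_{i,k}| = 3 \lfloor i/3 \rfloor + 1$ and $|B_{i,k}| = 3 \lfloor (i-1)/3 \rfloor + 2$, whence $|A_{i,k} \cup B_{i,k}| \equiv 0 \Mod{3}$ for every $i$. Within each block the elements form a step-$2$ progression, whose residues modulo $3$ cycle with period $3$; using $a \equiv 1$ and $\frac{a}{2} = 6k+2 \equiv 2 \Mod{3}$ I would check that $\overline{g \cap (A_{i,k} \cup B_{i,k})}$ is in fact a concatenation of copies of the single $3$-permutation $(\overline{ia}, \overline{ia+2}, \overline{ia+1})$, so that the $A$, $B$ part needs no grouping across distinct values of $i$.

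The main obstacle is the analogous accounting for the $C$ and $D$ families. These are defined by three separate formulas ($C_{1,k}$, $C_{2,k}$ and the general $C_{j,k}$ for $j \ge 3$, and likewise for $D_{j,k}$), and each block now mixes an $[\,\cdot\,]_2$ run with a run of consecutive integers; the boundary between a step-$2$ run and a step-$1$ run changes the residue pattern and must be tracked carefully. I would compute $|C_{j,k}|$ and $|D_{j,k}|$ modulo $3$, verify that $|C_{j,k} \cup D_{j,k}| \equiv 0 \Mod{3}$ (the floor terms again providing the divisibility), and then exhibit the transition triple at each junction $C_{j,k} \to D_{j,k}$, $D_{j,k} \to C_{j+1,k}$, the gluing $B_{3k,k} \to C_{1,k}$ and finally $D_{3k+1,k} \to E$, showing in each case that the relevant triple of residues is a $3$-permutation and that every interior decomposes into period-$3$ cycles. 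Once all junction triples are verified and all interiors are seen to split into $3$-permutations, concatenation over all blocks shows that $\overline{g}$ is a concatenation of $3$-permutations, which is the assertion.
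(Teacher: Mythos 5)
Your overall strategy coincides with the paper's: part (1) via Lemma \ref{lem_arith_seq} applied to the progression $b, c$, part (3) by endpoint comparison, part (2) by two inclusions with an Ap\'ery-set argument for $E$ (the paper likewise pins down $F(G)=(6k+1)a+2$ by exhibiting $(6k+2)a+2=(6k+1)a+3+c$), and part (4) by the cardinality counts $|A_{i,k}|=3\lfloor i/3\rfloor+1$, $|B_{i,k}|=3\lfloor (i-1)/3\rfloor+2$ together with junction triples; your observation that $\overline{g\cap(A_{i,k}\cup B_{i,k})}$ is a repetition of the single $3$-permutation $(\overline{ia},\overline{ia+2},\overline{ia+1})$ is correct and matches the paper's junction $(\overline{2i},\overline{2i+2},\overline{2i+1})$-type computation.

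There is, however, one step in your plan for part (2) that fails as stated: the parity dichotomy ``$A_{i,k}$ and $C_{j,k}$ use even $q$, while $B_{i,k}$ and $D_{j,k}$ use odd $q$.'' Since $a=12k+4$ and $\frac{a}{2}=6k+2$ are both even, every element $x=(s+q)a+q\cdot\frac{a}{2}-3q+2r$ satisfies $x\equiv q\Mod{2}$. The sets $A_{i,k}$ (all even) and $B_{i,k}$ (all odd) do split by parity of $q$, but each $C_{j,k}$ and each $D_{j,k}$ contains a run of \emph{consecutive} integers and hence elements of both parities: the odd elements of $C_{j,k}$ (e.g.\ $(3k+1)a-1\in C_{1,k}$) can only arise from odd $q$, and the even elements of the step-$1$ portion of $D_{j,k}$ can only arise from even $q$. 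Correspondingly, in the reverse inclusion an even-$q$ element need not land in an $A$- or $C$-block; the paper's Case 2 shows explicitly that for even $q=2\tilde q$ with $-6\tilde q+2r\leq -6k-3$ one gets $x\in D_{j-1,k}$. The paper repairs exactly this point by treating the two pieces of each $C_{j,k}$ separately (even $q$ for the $[\,\cdot\,]_2$ part, odd $q$ with $s=0$ for the consecutive part) and by using the recursions $B_{i,k}=A_{i-1,k}+\{b,c\}$, $D_{j,k}=C_{j-1,k}+\{b,c\}$ and $C_{j,k}=C_{j-1,k}+\{a\}$ (for $j\not\equiv 1\Mod 3$) to transfer membership in $G$ between blocks. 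Your argument needs this finer case split; with it, the rest of your outline goes through as in the paper.
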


\begin{proof}
\begin{enumerate}[leftmargin=*]
\item We have that $x \in G$ if and only if 
\begin{align*}
x & = s a + t b + u c = s  a + q  b + 2 r\\
& = (s+q) a + q \cdot \frac{a}{2} - 3 q + 2r 
\end{align*}
for some $\{q,r,s \} \subseteq \N$ with $0 \leq q \leq r$ according to Lemma \ref{lem_arith_seq}.

\item Let $x \in H_{12,k}$. 

If $x \in A_{i,k}$ for some $i \in [1,3k]$, then
\begin{equation*}
x = i a - 6 \left\lfloor \frac{i}{3} \right\rfloor + 2 h
\end{equation*}
for some $h \in \left[0, 3 \left\lfloor \frac{i}{3} \right\rfloor \right]$. 

We can write
\begin{equation*}
- 6 \left\lfloor \frac{i}{3} \right\rfloor + 2 h = - 6 l + 2 r
\end{equation*}
for some integers $l$ and $r$ such that $0 \leq l \leq \left\lfloor \frac{i}{3} \right\rfloor$ and $0 \leq r \leq 2$.
In particular we notice that $r =0$ if $l = 0$. 

If we set
\begin{align*}
s & := i - 3l,\\
q & := 2 l,\\
\end{align*}
then we have that 
\begin{equation*}
x = (s+q)a + q \cdot \frac{a}{2} - 3 q + 2 r,
\end{equation*}
namely $x \in G$.

As regards the sets $B_{i,k}$, we notice that
\begin{equation*}
B_{i,k} = A_{i-1,k} + \left\{ b, c \right\}
\end{equation*}
for any $i \in [1,3k]$.

Now we notice that
\begin{align*}
C_{1,k} & = (A_{3k,k} + \{ a \}) \cup \left(\left\{ 3k a - 6 \left\lfloor \frac{3k}{3} \right\rfloor \right\} + \left\{ b \right\} \right),\\
C_{2,k} & = C_{1,k} + \{ a \},\\
C_{3,k} & = (C_{2,k} + \{ a \}) \cup \left( \left\{ 3 k a - 6 \left\lfloor \frac{3k}{3} \right\rfloor \right\} \right) + \left\{2 c \right\}.
\end{align*}

Now we consider $x \in C_{j,k}$ for some $j \in [3,3k+1]$. If
\begin{equation*}
x = (3 k + j) a - (6k+2) + 2 h
\end{equation*}
for some $h \in [0,3k+1]$, then we can write 
\begin{equation*}
x = (s+q)a + q \cdot \frac{a}{2} - 3 q + 2 r,
\end{equation*}
where 
\begin{align*}
s & := 3 k + j - 3l,\\
q & := 2 l,
\end{align*}
for suitable values of $l$ and $r$ such that 
\begin{align*}
- (6k+2) + 2 h = - 6 l + 2 r
\end{align*}
with $0 \leq l \leq k+1$ and $0 \leq r \leq 2$.

Therefore, if $x \in C_{j,k}$ for some $j \equiv 1 \Mod{3}$ with 
\begin{equation*}
x = (3 k + j) a - (6k+2) + 2 h
\end{equation*}
for some $h \in [0,3k+1]$, then $x \in G$.

Now we take $x \in C_{j,k}$ for some $j \equiv 1 \Mod{3}$ with 
\begin{equation*}
x = (3 k + j) a - 1 - 6 \left\lfloor \frac{j-1}{3} \right\rfloor + 2 h
\end{equation*}
for some $h \in [0,j-1]$. Then
\begin{align*}
x & = (3k+j) a - 1 - 6 \left(\frac{j-1}{3} \right) + 2 h = (3k+j) a + 1 - 2 j + 2h.
\end{align*}
If we set
\begin{align*}
s & := 0,\\
q & := 1 + 2 \left( \frac{3k+j-1}{3} \right),\\
r & := h,
\end{align*}
then 
\begin{equation*}
x = (s+q)a + q \cdot \frac{a}{2} - 3 q + 2 r.
\end{equation*}
In particular we notice that $0 \leq r \leq q$. In fact, 
\begin{align*}
r & \leq j - 1  = 3 \left( \frac{j-1}{3} \right) \leq 2 k +  \frac{j-1}{3}\\
& \leq 2 k + 2 \cdot \frac{j-1}{3} + 1 = q.
\end{align*}

Therefore, if $x \in C_{j,k}$ with $j \equiv 1 \Mod{3}$, then $x \in G$. In the case that $j \equiv 2 \Mod{3}$ (resp. $j \equiv 3 \Mod{3}$), then 
\begin{equation*}
C_{j,k} = C_{j-1,k} + \{ a \} \quad \text{(resp. $C_{j-2,k}  + \{ a \}$)}. 
\end{equation*}

As regards the sets $D_{j,k}$, we have that
\begin{align*}
D_{1,k} & = A_{3k,k} + \left\{b, c \right\}
\end{align*}
and
\begin{align*}
D_{j,k} & = C_{j-1,k} +  \left\{b, c \right\}
\end{align*}
for any $j \in [2,3k+1]$.

Now we notice that 
\begin{align*}
C_{3k+1,k} & = \left[ (6k+1) a - \frac{a}{2}, (6k+1)a \right],\\
D_{3k+1,k} & = \{(6k+1)a + 1 \} \cup \left[(6k+1)a + 3, (6k+1) a + \frac{a}{2} -1 \right],
\end{align*}
namely
\begin{equation*}
C_{3k+1,k} \cup D_{3k+1,k} = \left[(6k+1) a - \frac{a}{2}, (6k+1) a + \frac{a}{2} -1 \right] \backslash \{(6k+1)a + 2 \}.
\end{equation*}
Moreover we have that
\begin{equation*}
(6k+2)a + 2 = (6k+1) a + 3 + c.
\end{equation*}
Therefore $\Ap(G,a) \subseteq [a,(6k+2)a+2]$ and $E  \subseteq G$.

Vice versa, let $x \in G$.

If $x \geq (6k+1)a + 3$, then $x \in E \subseteq H_{12,k}$. 

If $x < (6k+1)a +3$, then
\begin{equation*}
x = (s+q) a + q \cdot \frac{a}{2} - 3 q + 2 r,
\end{equation*}
where $\{s, q, r \} \subseteq \N$ with $0 \leq r \leq q$.

We deal in detail with the case that $q$ is even, namely $q = 2 \tilde{q}$ for some $\tilde{q} \in \N$.

We can write
\begin{equation*}
x = (s + 3 \tilde{q}) a - 6 \tilde{q} + 2 r.
\end{equation*} 

We set $i := s + 3 \tilde{q}$ and consider some cases. 

\begin{itemize}
\item \emph{Case 1:} $i \leq 3k+2$. 

We notice that $x \in A_{i,k} \cup C_{1,k} \cup C_{2,k}$ because 
\begin{equation*}
\tilde{q} \leq \left\lfloor \frac{i}{3} \right\rfloor \leq k.
\end{equation*}

\item \emph{Case 2:} $3 k + 3 \leq i \leq 6 k +1$. 

We have that 
\begin{equation*}
i = 3 k + j 
\end{equation*}
for some $j \in [3,3k+1]$. 

If $- 6 \tilde{q} + 2 r \geq - (6k+2)$, then $x \in C_{j,k}$. 

Conversely we have that
\begin{equation*}
- 6 k - 6 \left\lfloor \frac{j}{3} \right\rfloor \leq - 6 \tilde{q} + 2 r \leq - 6 k - 3.
\end{equation*}
Hence
\begin{align*}
x & = i a - a + a - 6 \tilde{q} + 2 r\\
& \geq (i - 1) a + a - 6 k- 6 \left\lfloor \frac{j}{3} \right\rfloor \\
& = (i - 1) a + a - \frac{a}{2} + 2 - 6 \left\lfloor \frac{j}{3} \right\rfloor\\
& = (3k+j-1) a + \frac{a}{2} + 2 - 6 \left\lfloor \frac{j}{3} \right\rfloor,  
\end{align*}
namely $x \in D_{j-1,k}$.

\item \emph{Case 3:} $i \geq  6k+2$.

First we notice that
\begin{align*}
x & \geq i a - 6 \tilde{q} \geq i a - 6 \left\lfloor \frac{i}{3} \right\rfloor.
\end{align*}

Moreover the sequence 
\begin{equation*}
\left\{ \delta_i \right\}_{i=6k+2}^{\infty} := \left\{i a - 6 \left\lfloor \frac{i}{3} \right\rfloor \right\}_{i=6k+2}^{\infty}
\end{equation*}
is increasing. Indeed, if $i \in [6k+2, \infty[$, we have that
\begin{align*}
\delta_{i+1} - \delta_i = a - 6 \left( \left\lfloor \frac{i+1}{3} \right\rfloor - \left\lfloor \frac{i}{3} \right\rfloor \right) \geq a - 6 > 0.
\end{align*}

Hence we conclude that 
\begin{align*}
x & \geq i a - 6 \left\lfloor \frac{i}{3} \right\rfloor \geq (6k+2) a - 6 \left\lfloor \frac{6k+2}{3} \right\rfloor \\
& = (6k+2) a - 12 k = (6k+1) a + 4,
\end{align*}
namely $x \in E$.
\end{itemize}

\item All inequalities follow from the definition of the sets.

\item For any $i \in [1,3k]$ we have that
\begin{align*}
|A_{i,k}| & = 3 \left\lfloor \frac{i}{3} \right\rfloor + 1,\\
|B_{i,k}| & =  \left\lfloor \frac{i-1}{3} \right\rfloor + 2.
\end{align*}
\end{enumerate}
Therefore $|A_{i,k} \cup B_{i,k}| \equiv 0 \Mod{3}$. The sequence formed by the greatest element of $A_{i,k}$ and the smallest two elements of $B_{i,k}$ is (modulo $3$) a $3$-permutation and the sequences of the remaining elements of $\overline{g \cap A_{i,k}}$ and $\overline{g \cap B_{i,k}}$ can be obtained through concatenations of $3$-permutations. Since the same holds for the elements in $C_{j,k}$ and $D_{j,k}$ for any $j$, we conclude that $H_{12,k}$ is a $3$-permutation semigroup. 
\end{proof}

\begin{lemma}\label{family_13}
Let $k$ be a positive integer, $a:=6k+5$, $b:=a + \frac{a-3}{2}$ and $c:=b+1$. 

Let $S := \{a, b, c \}$ and
\begin{equation*}
H_{13,k} := \{ 0 \} \cup \left( \cup_{i=0}^k I_{i,k} \right) \cup C,
\end{equation*}
where $I_{i,k} := A_{i,k} \cup B_{i,k}$ and 
\begin{align*}
A_{i,k} & := \left( [a, a + 3 i \right] \cup [b, b + 1+3i] \cup [2a, 2a+3i]) + \{ 2 i b \},\\
B_{i,k} & := \left( [a, a+1+3i] \cup [b, b+3+3i] \cup [2a, 2a+1+3i] \right) + \{(2 i + 1 ) b \},
\end{align*}
for any $i \in [0,k]$, while
\begin{equation*}
C := [2 a + 2 k b, \infty[.
\end{equation*}
Then the following hold.
\begin{enumerate}
\item $H_{13,k}$ is a co-finite submonoid of $G$ containing $S$.
\item $H_{13,k}$ is a $3$-permutation semigroup.
\end{enumerate}
\end{lemma}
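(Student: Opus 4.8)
The plan is to follow the pattern of Lemmas \ref{family_1}--\ref{family_12}: obtain an explicit description of the elements of $G$, match it against the interval decomposition defining $H_{13,k}$, and only then read off residues modulo $3$. The first thing I would record is purely arithmetic. From $a=6k+5$ and $b=9k+6$ one gets $a\equiv 2$, $b\equiv 0$ and $c\equiv 1\Mod{3}$; in particular every translate by a multiple of $b$ preserves residues modulo $3$, which is exactly why each block $A_{i,k}$, $B_{i,k}$ has a residue pattern independent of its shift $2ib$ (resp. $(2i+1)b$). Since $c=b+1$, the pair $\{b,c\}$ is an arithmetic progression of length two, so Lemma \ref{lem_arith_seq} gives $\langle b,c\rangle=\{qb+r:0\le r\le q\}$ and hence
\begin{equation*}
x\in G \iff x=sa+qb+r \quad\text{for some } \{s,q,r\}\subseteq\N \text{ with } 0\le r\le q.
\end{equation*}

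Next I would prove $H_{13,k}=G$ by two inclusions, which at once yields claim (1): being equal to $G$, the set $H_{13,k}$ is a co-finite submonoid of $G$, and it contains $S=\{a,b,c\}\subseteq A_{0,k}$. For $H_{13,k}\subseteq G$, every element of a block is a translate by a multiple of $b$ of a point of one of its three base intervals, and each such point is checked to admit a representation $sa+qb+r$ with $0\le r\le q$; the points near the right end of an interval typically require a representation that trades $b$'s for $a$'s or $c$'s (for example, the top of the first interval of $A_{1,k}$ at $k=1$ is $4a$ rather than $a+2b$), so this is the first place genuine care is needed. For the reverse inclusion I would take $x=sa+qb+r\in G$ with $x<2a+2kb$, show that this forces the governing block index to lie in $[0,k]$, and locate $x$ inside the decomposition by a case analysis on the size of its $b$-component $q$ together with $s$ and the defect $r$. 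To close up the tail I would show $C=[2a+2kb,\infty[\,\subseteq G$ by locating a run of $a$ consecutive elements of $G$ just below the threshold, equivalently by bounding $\Ap(G,a)$ and computing $F(G)=2a+2kb-1$ through the Ap\'ery--Frobenius relation recalled in the preliminaries; this is what makes $H_{13,k}$ co-finite. I would flag that $B_{k,k}$ and the last interval of $A_{k,k}$ are absorbed into $C$, so the genuinely distinct sub-threshold part is $\bigl(\cup_{i=0}^{k-1}I_{i,k}\bigr)$ together with the first two intervals of $A_{k,k}$.

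Finally, for claim (2) I would compute $|A_{i,k}|=9i+4$ and $|B_{i,k}|=9i+8$, so that $|I_{i,k}|=18i+12\equiv 0\Mod{3}$, while the sub-threshold part of $A_{k,k}$ has size $6k+3\equiv 0\Mod{3}$. Since translating by a multiple of $b$ fixes residues, each block's residue string is the concatenation of the residue strings of its three base intervals, which start at $\overline 2$, $\overline 0$, $\overline 1$ respectively and then run through consecutive residues; I would verify directly that at each junction (between the three intervals of a block, between $A_{i,k}$ and $B_{i,k}$, between consecutive blocks, and at the entrance to $C$) the three straddling residues form a $3$-permutation, exactly as in the earlier lemmas. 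Because every block has length divisible by $3$ and $\overline{g\cap C}$ is a run of consecutive integers beginning at residue $\overline 1$, the global blocking resets at every boundary and $\overline g$ is a concatenation of $3$-permutations.

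The step I expect to be the real obstacle is the bookkeeping rather than any single inequality: keeping the three interleaved base intervals correctly aligned across the $b$-shifts, both when converting an arbitrary representation $sa+qb+r$ into a block-and-interval address and when checking the junction residues, is where the argument is most error-prone, and this is compounded by the overlap of the last two blocks with $C$.
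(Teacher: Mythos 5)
Your residue analysis for claim (2) is essentially the paper's own: the counts $|A_{i,k}|=9i+4$ and $|B_{i,k}|=9i+8$, the starting residues $\overline{2},\overline{0},\overline{1}$ of the three base intervals (invariant under the shifts by multiples of $b\equiv 0 \Mod{3}$), the junction checks, and the observation that $B_{k,k}$ and the last interval of $A_{k,k}$ are absorbed into $C$ while the sub-threshold remainder of $A_{k,k}$ has $6k+3$ elements all check out. Where you genuinely diverge is claim (1). The paper never locates an arbitrary $x=sa+qb+r$ inside the decomposition: it proves $H_{13,k}\subseteq G$ by induction on the block index, using identities such as $3a=2b+3$ and the fact that each base interval of $A_{i,k}$ (resp.\ $B_{i,k}$) is a translate by an element of $S\cup\{2a\}$ of an interval of $B_{i-1,k}$ (resp.\ $A_{i-1,k}$); it then checks $S\subseteq H_{13,k}$ and closes $H_{13,k}$ under addition via a short table showing $I_{i_1,k}+I_{i_2,k}$ lands in $I_{i_1+i_2,k}\cup I_{i_1+i_2+1,k}\cup I_{i_1+i_2+2,k}$ below the threshold $2a+2kb$. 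Since $G=\langle S\rangle$ is the smallest submonoid containing $S$, the inclusion $G\subseteq H_{13,k}$ then comes for free, which is precisely how the paper avoids the address bookkeeping you correctly flag as the error-prone step. Your route --- the normal form $x=sa+qb+r$ with $0\le r\le q$ from Lemma \ref{lem_arith_seq}, a case analysis on the parity of $q$ and on $s\in\{0,1,2\}$, and a reduction of $s\ge 3$ via $3a=2b+3$ --- does go through (I checked the representative cases) and is in the style of Lemmas \ref{family_9}--\ref{family_12}, but it buys the equality $G=H_{13,k}$ at the cost of more casework, whereas the paper's submonoid argument buys it with an easier addition table at the cost of proving a formally weaker-looking statement. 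Two minor omissions relative to the paper: it treats $k=1$ separately by direct computation and runs the general argument only for $k\ge 2$, and the tail estimate is obtained from the run $[2a+2kb,\,3a+2kb]\subseteq G$ of $a+1$ consecutive elements (giving $\Ap(G,a)\subseteq[a,3a+2kb]$), which is all that is needed; your exact value $F(G)=2a+2kb-1$ is consistent with this but not required.
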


\begin{proof}
We notice that the claim is true when $k=1$ by a direct computation (see Table 1 in Section \ref{sec_n_pns}). 

In the following we suppose that $k \geq 2$.

\begin{enumerate}[leftmargin=*]
\item We prove by induction on $i$ that 
\begin{equation*}
A_{i,k} \cup B_{i,k} \subseteq G.
\end{equation*}

First we observe that 
\begin{equation*}
A_{0,k} = \{ a \} \cup \{b, c \} \cup \{ 2a \} = S \cup \{ 2 a \} \subseteq G.
\end{equation*}

Since
\begin{align*}
[a+b,a+b+1] & = \{ a \} + \{b, c \},\\
[2b,2b+3] & = ([b,c] + [b,c]) \cup \{ 3a \},\\
[2a+b,2a+b+1] & = [a+b,a+b+1] + \{ a  \},
\end{align*}
we can say that $B_{0,k} \subseteq G$.

Now let $i > 0$. We have that
\begin{align*}
[a,a+3i] + \{ 2 i b \} = & ([2b, 2b+3+3(i-1)] + \{2 (i-1) b \} ) + \{ a  \},\\
[b,b+1+3i] + \{ 2 i b \} = & (([2b, 2b+3+3 (i-1)] + \{ 2 (i-1) b \}) + \{ b \})\\
& \cup ((\{ a + b +1 + 3 (i-1) \} + \{ 2 (i-1) b \}) + \{ 2 a \},\\
[2a,2a+3i] + \{ 2 i b \} = & ([a,a+3i] + \{ 2 i b \}) + \{ a \}. 
\end{align*}
Therefore $A_{i,k} \subseteq B_{i-1,k} + \{a, b, 2a \}  \subseteq G$.

In a similar way we can prove that $B_{i,k} \subseteq A_{i-1,k} + \{ a, b, c \}$. 

As regards the set $C$, we notice that
\begin{align*}
2 a + 3 k & = a + b -1,\\
a + b + 1 + 3 k & = 2 b,\\
2 b + 3 & =  3 a.
\end{align*}
Therefore $[2a+2kb,3a+2kb] \subseteq G$, namely $\Ap(G,a) \subseteq [a,3a+2kb]$. We conclude that $C \subseteq G$.

Now we take $x \in I_{i_1,k}$ and $y \in I_{i_2,k}$ for some $\{i_, i_2 \} \subseteq [0,k]$. If $x + y < 2 a + 2 k b$, then $x+y$ belongs to one of the sets in rows $2-3$, columns $2-3$ of the table below, where $i_3 := i_1+ i_2$.
\begin{displaymath}
\begin{array}{c|c|c}
& A_{i_2,k} & B_{i_2,k} \\
\hline
A_{i_1,k} & I_{i_3,k} & I_{i_3,k} \cup I_{i_3 + 1, k}\\
\hline
B_{i_1,k} & & I_{i_3+1,k} \cup I_{i_3 + 2, k} 
\end{array}
\end{displaymath}

\item The following inequalities hold for any $i \in [0,k-1]$:
\begin{itemize}
\item $a +3 i < b$;
\item $b + 1+ 3 i < 2a$;
\item $2 a + 3 i < a + b$;
\item $a + b + 1 + 3 i < 2 b$;
\item $2 b + 3 +3 i < 2 a + b$;
\item $2 a + b + 1 +3 i +2 i b < a + 2 (i+1) b $.
\end{itemize}

Moreover we have that
\begin{itemize}
\item $a + 3 k< b$;
\item $b + 1 + 3 k< 2a$. 
\end{itemize}
\end{enumerate}

We notice that
\begin{equation*}
A_{i,k} < B_{i,k} \quad \text{ and } B_{i,k} < A_{i+1,k}
\end{equation*}
and
\begin{equation*}
|A_{i,k}| \equiv 1 \Mod{3}, \quad B_{i,k} \equiv 2 \Mod{3}
\end{equation*}
for any $i \in [0,k-1]$. Therefore 
\begin{equation*}
|A_{i,k} \cup B_{i,k}| \equiv 0 \Mod{3}
\end{equation*}
for any $i \in [0,k-1]$. From the definition of the sets $A_{i,k}$ and $B_{i,k}$ one can verify that the sequence $\overline{g \cap (A_{i,k} \cup B_{i,k})}$ is given by a concatenation of $3$-permutations for any $i$. The same holds for the sequence $\overline{g \cap (A_{k,k} \cup C)}$. Hence we conclude that $H_{13,k}$ is a $3$-permutation semigroup.
\end{proof}

\begin{lemma}\label{family_14}
Let $k$ be a positive integer, $a:=6k+5$, $b:=2a-6$ and $c:=2a-4$.

Let $S:=\{a, b, c \}$ and
\begin{equation*}
H_{14,k} := \left( \cup_{i=0}^{4k+2} I_{i,k} \right) \cup [(4k+1)a + 3, \infty[,
\end{equation*}
where 
\begin{align*}
I_{i,k} & := \left[ ia - 6 \left\lfloor \frac{i}{2} \right\rfloor, i a - 4 \right]_2 \cup \{ia  \}
\end{align*}  
for any $i \in [0,4k+2]$.

Then the following hold.
\begin{enumerate}
\item $x \in G$ if and only if 
\begin{equation*}
x = (s+2q) a - 6 q + 2 r,
\end{equation*}
where $\{q, r, s \} \subseteq \N$ with $0 \leq r \leq q$.
\item We have that
\begin{displaymath}
\begin{array}{ll}
I_{i,k} < I_{i+2,k} & \text{if $i \in [0,4k]$,}\\
I_{i,k} < I_{i+1,k} & \text{if $i \in [0, 2k]$,}\\
\end{array}
\end{displaymath}
while for any $i \in [2k+1,4k+1]$ we have that
\begin{equation*}
I_{i+1,k} \cap [(i-1)a + 1, ia] = \left[ia - 6 i_k - 1, i a - 1 \right]_2,
\end{equation*}
where $i_k := \left\lfloor \frac{i-1-2k}{2} \right\rfloor$.
\item $G = H_{14,k}$.
\item $H_{14,k}$ is a $3$-permutation semigroup.
\end{enumerate}
\end{lemma}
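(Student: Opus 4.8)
The plan is to follow the four-part template already used for Lemmas~\ref{family_9} and~\ref{family_10}, of which the present family is essentially the analogue with $a = 6k+5$ in place of $a = 3k+2$. For part~(1) I would argue directly from Lemma~\ref{lem_arith_seq}: since $c = b+2$, the pair $\{b,c\}$ is an arithmetic progression with first term $b$ and common difference $2$ (a single step), so every element of $\langle b, c \rangle$ has the shape $q b + 2 r$ with $0 \le r \le q$; writing a generic element of $G$ as $x = s a + q b + 2 r$ and substituting $b = 2a - 6$ yields $x = (s+2q) a - 6 q + 2 r$ with $0 \le r \le q$, the asserted characterization. For part~(2) I would read off from the definition that the least element of $I_{i,k}$ is $i a - 6 \lfloor i/2 \rfloor$ and its greatest element is $i a$; the first two inequalities then reduce to the numerical facts $6 \lfloor (i+1)/2 \rfloor < a$ for $i \le 2k$ and $6 \lfloor i/2 \rfloor < 2a-6$ for $i \le 4k$, both immediate from $a = 6k+5$, while the overlap in the range $i \in [2k+1, 4k+1]$ I would obtain by intersecting the arithmetic-progression part of $I_{i+1,k}$ with the window $[(i-1)a + 1, i a]$ and counting which terms fall inside, producing exactly $[i a - 6 i_k - 1, i a - 1]_2$ with $i_k = \lfloor (i-1-2k)/2 \rfloor$.

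Part~(3) is the double inclusion $G = H_{14,k}$. For $H_{14,k} \subseteq G$ I would treat each $I_{i,k}$ as in Lemma~\ref{family_9}(3): the point $i a$ lies in $\langle a \rangle$, while for an element $i a - 6 \lfloor i/2 \rfloor + 2 h$ of the progression part I would rewrite the offset as $-6 \lfloor i/2 \rfloor + 2 h = -6 q + 2 r$ with $0 \le r \le q \le \lfloor i/2 \rfloor$, so that $x = (s+2q) a - 6 q + 2 r$ with $s = i - 2q \ge 0$ and hence $x \in G$ by part~(1). To capture the tail I would bound the Ap\'ery set, exhibiting the large elements of $\Ap(G,a)$ and checking that the few integers just above the topmost sets are reached by adding $b$ or $c$ to earlier elements; via the relation $F(G) = \max(\Ap(G,a)) - a$ this yields the conductor $(4k+1)a + 3$, so $[(4k+1)a+3,\infty[\,\subseteq G$. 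For the reverse inclusion, given $x = (s+2q) a - 6 q + 2 r$ I would set $i := s + 2q$ and verify, exactly as in Lemmas~\ref{family_9}(3) and~\ref{family_10}(2), that $-6q + 2r \in \{-6 \lfloor i/2 \rfloor\} \cup [-6 \lfloor i/2 \rfloor + 2, 0]$, placing $x$ in $I_{i,k}$ whenever $x < (4k+1)a + 3$ and in the tail otherwise.

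Part~(4), the $3$-permutation property, is where the real work lies and which I expect to be the main obstacle. I would first record $|I_{i,k}| \bmod 3$. In the non-overlapping regime $i \in [0, 2k]$ the sets are strictly ordered by part~(2), and the argument is identical to Lemma~\ref{family_9}(4): consecutive blocks have cardinalities summing to a multiple of $3$, and each junction triple (the greatest element of $I_{i,k}$ with the two least elements of $I_{i+1,k}$) reads as a $3$-permutation modulo $3$. The genuine difficulty is the overlapping regime $i \in [2k+1, 4k+1]$, where $\overline{g \cap H_{14,k}}$ is no longer the concatenation of the individual strings $\overline{g \cap I_{i,k}}$ but the reduction of a merge of mutually overlapping sets: near $i a$ the set $I_{i,k}$ contributes elements of one parity and $I_{i+1,k}$ the interleaving elements of the opposite parity, as described by the overlap formula of part~(2). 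Using that formula I would reconstruct the genuinely interleaved increasing sequence in this range, track its residues modulo $3$ through the index $i_k = \lfloor (i-1-2k)/2 \rfloor$, and check that the merged residue string still factors into $3$-permutations, with the final junction into $[(4k+1)a+3,\infty[$ again a $3$-permutation. Controlling this interleaving, rather than any single congruence, is the crux; once it is in hand the remaining bookkeeping parallels Lemmas~\ref{family_9} and~\ref{family_10}.
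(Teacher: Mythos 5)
Your proposal is correct and follows essentially the same route as the paper: part (1) via Lemma \ref{lem_arith_seq}, part (3) by the double inclusion with an Ap\'ery-set bound giving the conductor $(4k+1)a+3$, and part (4) by cardinality-mod-$3$ counting plus junction triples, with the overlapping regime $i \in [2k+1,4k+1]$ correctly identified as the crux. The paper's concrete device for the merge you describe is to re-partition $G$ into the windows $U_i := [(i-1)a+1, ia] \cap G$, verify $|U_i| \equiv 0 \pmod{3}$ in the appropriate ranges, and check the junction triples there; your plan amounts to the same computation.
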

\begin{proof}
We notice that the claim is true when $k=1$ by a direct computation (see Table 1 in Section \ref{sec_n_pns}). 

In the following we suppose that $k \geq 2$.

\begin{enumerate}[leftmargin=*]
\item The assertion follows from Lemma \ref{lem_arith_seq}.
\item All assertions follow from the definition of the sets.
\item Let $x \in I_{i,k}$ for some $i \in [0,4k+2]$. 

If $x = ia$, then $x \in G =\langle S \rangle$.

If $x < ia$, then
\begin{equation*}
x = i a - 6 \left\lfloor \frac{i}{2} \right\rfloor + 2 h
\end{equation*}
for some integer $h$ such that $0 \leq h \leq 3 \left\lfloor \frac{i}{2} \right\rfloor - 2$.

We can write
\begin{equation*}
- 6 \left\lfloor \frac{i}{2} \right\rfloor + 2 h = - 6 q + 2 r
\end{equation*}
for some $\{q, r \} \subseteq \N$ such that 
\begin{equation*}
0 \leq q \leq \left\lfloor \frac{i}{2} \right\rfloor \quad \text{ and } \quad  0 \leq r \leq 2.
\end{equation*}
In particular we notice that $r = 0$ if $q = 0$, while $r \leq 1$ if $q =1$ due to the restrictions on $h$. Therefore $r \leq q$ whichever the value of $q$ is.

Hence $x = (s+2q) a - 6 q + 2 r$, where $s := i - 2 q$.

Now we want to prove that any $x \geq (4k+1)a+3$ belongs to $G$. 

First we notice that
\begin{align*}
I_{4k+1,k} \cap [4k a + 1, (4k+1)a] & = [4ka + 1, 4 ka + 1 + 6k]_2,\\
I_{4k+2,k} \cap [4k a + 1, (4k+1)a] & = [4ka + 4, 4 k a + 4 + 6 k]_2.
\end{align*}
Therefore 
\begin{align*}
\{4ka+1\} \cup [4ka+3,(4k+1)a-3] \cup \{(4k+1)a - 1 \}  & \subseteq G.
\end{align*}
Since
\begin{equation*}
[4ka+4,4ka+8] + \{ b \} = [(4k+2)a -2, (4k+2)a+2],
\end{equation*}
we can say that 
\begin{equation*}
[(4k+1)a+3,(4k+2)a+2] \subseteq G.
\end{equation*}
Hence $\Ap(G,a) \subseteq [a,(4k+2)a+2]$ and $[(4k+1)a+3, \infty[ \subseteq G$.

Now we take $x \in G$. We have that
\begin{equation*}
x = (s+ 2 q) a - 6 q + 2 r
\end{equation*}
for some $\{q, r, s \} \subseteq \N$ with $0 \leq r \leq q$.

Let $i := s + 2q$. Then $x \in I_{i,k}$. In fact, $q = \frac{i}{2}$ if $s = 0$, while $q \leq \lfloor \frac{i}{2} \rfloor$ if $s > 0$, and 
\begin{equation*}
- 6 q + 2 r = 0 \quad \text{ or } \quad -6q +2 r \leq - 4.
\end{equation*}

\item For any $i \in [1,4k+2]$ we define $U_i := [(i-1)a +1, i a] \cap G$.

If $i \in [1,2k]$, then 
\begin{displaymath}
|U_i| \equiv
\begin{cases}
1 \Mod{3} & \text{if $i=1$,}\\
0 \Mod{3} & \text{if $i \not = 1$.}
\end{cases}
\end{displaymath}
For any $i \in [2,2k]$ the sequence (modulo $3$) formed by the greatest element of $U_{i-1}$ and the two smallest elements of $U_{i}$ reads as follows:
\begin{equation*}
(\overline{2i-2}, \overline{2i}, \overline{2i+2}).
\end{equation*} 
The sequence of the remaining elements of $\overline{g \cap U_{i}}$ can be obtained through a concatenation of $3$-permutations.

If $i \in [2k+1,4k+1]$, then 
\begin{align*}
U_i & = (I_{i,k} \cap [(i-1)a+1, ia-6 i_k -4]) \cup [ia-6 i_k -2, i a -3] \cup \{ ia - 1, i a \}.
\end{align*}
In particular, if $i \in \{ 2k+1, 2k+2 \}$, then
\begin{align*}
U_i & = (I_{i,k} \cap [(i-1)a+1, ia - 4]) \cup \{ ia - 1, i a \}.
\end{align*}

For $i = 2k+1$ we have that
\begin{equation*}
|(I_{i,k} \cap [(i-1)a+1, ia - 4])| \equiv 2 \Mod{3}.
\end{equation*}

For any $i \in [2k+2,4k+1]$ we have that
\begin{equation*}
|I_{i,k} \cap [(i-1)a+1, ia-6 i_k -4]| \equiv 1 \mod{3},
\end{equation*}
while 
\begin{equation*}
|[ia-6 i_k -2, i a -3]| \equiv 0 \mod{3},
\end{equation*}
provided that $i > 2k+2$. Therefore
\begin{equation*}
|U_i| \equiv 0 \Mod{3}
\end{equation*}
for any $i \in [2k+3,4k+1]$.

Now we  observe that the sequence (modulo $3$) formed by the greatest element of $U_{2k}$ and the two smallest elements of $U_{2k+1}$ reads as follows:
\begin{equation*}
(\overline{k}, \overline{k+2}, \overline{k+1}).
\end{equation*}  

For any $i \in [2k+2,4k+2]$ we have that the sequence (modulo $3$) formed by the greatest two elements of $U_{i-1}$ and the smallest element of $U_{i}$ reads as follows:
\begin{equation*}
(\overline{(i-1)a-1}, \overline{(i-1)a}, \overline{(i-1)a+1}).
\end{equation*}

The sequence of the remaining elements of $\overline{g \cap U_i}$ is given by a concatenation of $3$-permutations for any $i \in [2k+1,4k+1]$. 

As regards the set $U_{4k+2}$, we notice that
\begin{equation*}
U_{4k+2} =\{ (4k+1)a+1 \} \cup [(4k+1)a+3, (4k+2)a].
\end{equation*}
Therefore the sequence of elements greater than $(4k+1)a+1$ belonging to $H_{14,k}$ reads (modulo $3$) as an infinite concatenation of $3$-permutations.
\end{enumerate}
\end{proof}

\begin{lemma}\label{family_15}
Let $k$ be a positive integer, $a:=12k+8$, $b:= a + 2$ and $c := b + \frac{b}{2}$.

Let $S := \left\{a, b, c \right\}$ and
\begin{equation*}
H_{15,k} := (\cup_{i=0}^{6k+4} (A_{i,k} \cup B_{i,k}) ) \cup C, 
\end{equation*}
where $A_{0,k} := \{ 0 \}$,  $B_{0,k} := \emptyset$, and
\begin{align*}
A_{i,k} & := [ i a, ia + 2 i]_2,\\
B_{i,k} & := \left[i a + \frac{a}{2} + 3, ia + \frac{a}{2} + 3 + 2 (i-1) \right]_2,
\end{align*}
for any $i \in [1,6k+4]$, while
\begin{equation*}
C := \left[(6k+4) a + \frac{a}{2} + 3, \infty \right[.
\end{equation*}
If 
\begin{align*}
U_{i,k} & := \left[ i a, ia + \frac{a}{2} + 2 \right],\\
V_{i,k} & := \left[i a + \frac{a}{2} + 3, (i+1) a - 1 \right],
\end{align*}
for any $i \in \left[1, 6k + 4 \right]$, then the following hold.

\begin{enumerate}
\item $x \in G$ if and only if 
\begin{equation*}
x = (q+ 3v + \epsilon) a + \epsilon \cdot \frac{a}{2}  + 6 v + 3 \epsilon + 2 r,
\end{equation*}
where $\{q, r, v \} \subseteq \N$, $\epsilon \in \{0, 1 \}$ and $0 \leq r \leq q$.
\item $G = H_{15,k}$.
\item The following inequalities hold:
\begin{displaymath}
\begin{array}{ll}
U_{i,k} < V_{i,k} & \text{for any $i$,}\\
A_{i,k} < A_{i+1,k} & \text{if $i \in [1,6k+3]$,}\\
B_{i,k} < B_{i+1,k} & \text{if $i \in [1,6k+3]$,}\\
V_{i,k} < U_{i+1,k} & \text{if $i \in [1,6k+3]$,}\\
A_{i,k} < V_{i,k} & \text{if $i \in [1,3k+3]$,}\\
A_{i,k} \cap V_{i,k} = \left[ia + \frac{a}{2} + 3, ia + 2 i \right]_2 & \text{if $i \in [3k+4,6k+4]$},\\
B_{i,k} < U_{i+1,k} & \text{if $i \in [1,3k+1]$,}\\
B_{i,k} \cap U_{i+1,k} = \left[(i+1) a,  ia + \frac{a}{2} + 3 + 2 (i-1) \right]_2 & \text{if $i \in [3k+2,6k+3]$.}
\end{array}
\end{displaymath}
\item $H_{15,k}$ is a $3$-permutation semigroup.
\end{enumerate}
\end{lemma}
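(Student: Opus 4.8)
The plan is to obtain the characterization (1) directly from Lemma~\ref{lem_arith_seq}. Since $b=a+2$, the submonoid $\langle a,b\rangle$ is generated by a two-term arithmetic progression of common difference $2$, so $x\in\langle a,b\rangle$ if and only if $x=qa+2r$ with $0\le r\le q$. Writing a general element of $G$ as such an element plus $uc$ and using the identity $c=a+\frac{a}{2}+3$, I would set $u=2v+\epsilon$ with $\epsilon\in\{0,1\}$; because $2\cdot\frac{a}{2}=a$, the term $u\cdot\frac{a}{2}=va+\epsilon\cdot\frac{a}{2}$ absorbs its even part into the coefficient of $a$, and collecting terms yields exactly $x=(q+3v+\epsilon)a+\epsilon\cdot\frac{a}{2}+6v+3\epsilon+2r$ with $0\le r\le q$.

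For (2) I would prove the two inclusions separately. The inclusion $H_{15,k}\subseteq G$ is immediate from (1): each point of $A_{i,k}$ is the normal form with $\epsilon=0$, $v=0$, $q=i$, and each point of $B_{i,k}$ is the normal form with $\epsilon=1$, $v=0$, $q=i-1$; for the tail $C$ I would bound $\Ap(G,a)$ by exhibiting a block of $a$ consecutive integers of $G$ at the top of the construction, covering the few residual gaps as a smaller element plus a generator, exactly the device used in Lemma~\ref{family_8}, which identifies $F(G)$ and gives $C\subseteq G$. For $G\subseteq H_{15,k}$ I would take $x$ in the normal form of (1), put $i:=q+3v+\epsilon$, and split on $\epsilon$. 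When $\epsilon=0$ the offset $x-ia=6v+2r$ is even and satisfies $6v+2r\le 2(3v+q)=2i$, so $x\in A_{i,k}$ when $i\le 6k+4$ and $x\in C$ otherwise; when $\epsilon=1$ the offset $x-(ia+\frac{a}{2}+3)=6v+2r\le 2(i-1)$ is again even, placing $x$ in $B_{i,k}$ or in $C$.

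The inequalities in (3), together with the two intersection identities, are read off from the definitions of the sets. Their purpose is to record the threshold beyond which a block spills over the midpoint $ia+\frac{a}{2}$ of its window: the $A$-blocks stay below $V_{i,k}$ up to $i=3k+3$ and then intrude into it, while the $B$-blocks stay below $U_{i+1,k}$ up to $i=3k+1$ and then intrude into the next window. The intervals $U_{i,k}$ and $V_{i,k}$ are introduced precisely to isolate these two halves so that the sorted order of $\overline{g}$ can be tracked once the blocks begin to overlap.

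The heart of the argument, and the step I expect to be the main obstacle, is (4). I would first note that $a\equiv 2\pmod 3$, so along any run of consecutive even integers the residues mod $3$ repeat the $3$-permutation $(\overline{2i},\overline{2i+2},\overline{2i+1})$, and along any run of consecutive odd integers the analogous pattern holds; since $|A_{i,k}|=i+1$ and $|B_{i,k}|=i$, grouping the windows three at a time by $i_1<i_2<i_3$ with $i_j\equiv j\pmod 3$ makes $\sum_j(|A_{i_j,k}|+|B_{i_j,k}|)=2(i_1+i_2+i_3)+3$ divisible by $3$, and one checks that each length-$3$ window straddling two consecutive blocks is itself a $3$-permutation, exactly as in Lemmas~\ref{family_8} and~\ref{family_12}. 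The delicate point is the transition regime $i>3k+1$: there the $B$- and $A$-blocks overlap the adjacent half-windows, so $\overline{g}$ no longer lists the blocks one after another, and the sorted order must be recovered from the intersection formulas $B_{i,k}\cap U_{i+1,k}$ and $A_{i,k}\cap V_{i,k}$ of (3). I expect most of the labour to be a careful case analysis confirming that, even after this reshuffling, every boundary triple still carries all three residues mod $3$; the final tail $A_{6k+4,k}\cup C$ is then closed by the same residue count.
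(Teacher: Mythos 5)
Your proposal follows the paper's proof essentially step for step: the normal form in (1) via Lemma \ref{lem_arith_seq} applied to $\langle a, a+2\rangle$ followed by the substitution $u=2v+\epsilon$, the two inclusions in (2) with the Ap\'ery-set bound for the tail $C$, the reading-off of (3), and the window-by-window residue count for (4), including the correct identification of the overlapping regime $i\ge 3k+2$ as the delicate point and of $U_{i,k}$, $V_{i,k}$ as the consecutive chunks along which $\overline{g}$ must be read once the even block $A_{i,k}$ and the odd block $B_{i,k}$ begin to interleave. The only difference is one of completeness rather than of method: the case analysis you defer at the end of (4) is precisely where the paper spends most of its effort --- it recomputes the chunk sizes in the overlapping regime ($|U_{i,k}\cap G|=i+2$ and $|V_{i,k}\cap G|=i-2$ for $i\in[3k+3,6k+3]$, with $i=3k+2$ and the final triple $(6k+2,6k+3,6k+4)$ handled separately) and verifies each boundary triple explicitly, so your outline is the right plan but stops short of the verification that constitutes the bulk of the paper's argument.
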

\begin{proof}
\begin{enumerate}[leftmargin=*]
\item We have that $x \in G$ if and only if 
\begin{equation*}
x = s a + t (a+2) + u \left( a + 2 + \frac{a+2}{2} \right)
\end{equation*}
for some $\{s, t, u \} \subseteq \N$. According to Lemma \ref{lem_arith_seq} we can write 
\begin{equation*}
x = q a + 2 r + u \left( a + 2 + \frac{a+2}{2} \right)
\end{equation*}
for some $\{q, r, u \} \subseteq \N$ such that $0 \leq r \leq q$. If we write 
\begin{equation*}
u = 2 v + \epsilon
\end{equation*}
for some $v \in \N$ and $\epsilon \in \{0, 1 \}$, then the result follows.

\item If $x \in A_{i,k}$ for some $i \geq 1$, then $x = i a + 2 h$ for some $h \in [0,i]$. Therefore $x = q a + 2 r$, where $q := i$ and $r:=h$, namely $x \in G$.

If $x \in B_{i,k}$ for some $i \geq 1$, then $x \in A_{i-1} + \{ c \} \subseteq G$.

Now we notice that
\begin{align*}
A_{6k+3,k} & = [(6k+3)a, (6k+4)a - 2]_2,\\
B_{6k+3,k} & = \left[ (6k+3)a + \frac{a}{2} + 3, (6k+4) a + \frac{a}{2} -1 \right]_2,\\
A_{6k+4,k} & = [(6k+4) a, (6k+5)a]_2.
\end{align*}
Therefore 
\begin{equation*}
\left[(6k+3) a + \frac{a}{2} + 3, (6k+4) a + \frac{a}{2} \right] \cup \{ (6k+4) a + \frac{a}{2} + 2 \} \subseteq G
\end{equation*}
and consequently
\begin{equation*}
\left[(6k+4) a + \frac{a}{2} + 3, (6k+5) a + \frac{a}{2} \right] \cup \{ (6k+5) a + \frac{a}{2} + 2 \} \subseteq G.
\end{equation*}
Since
\begin{equation*}
(6k+5) a + \frac{a}{2} + 1 = (6k+4) a - 1 + c,
\end{equation*}
we conclude that 
\begin{equation*}
\Ap(G,a) \subseteq \left[a, (6k+5) a + \frac{a}{2}+2 \right].
\end{equation*}
Hence $C \subseteq G$.

Now let $x \in G$.

If $x \geq (6k+4)a + \frac{a}{2} + 3$, then $x \in C$.

Conversely we have that
\begin{equation*}
x = (q+ 3v + \epsilon) a + \epsilon \cdot \frac{a}{2}  + 6 v + 3 \epsilon + 2 r
\end{equation*}
for some $\{q, r, v \} \subseteq \N$, $\epsilon \in \{0, 1 \}$ and $0 \leq r \leq q$. 

If $\epsilon = 0$, then $x = ia+2h$, where
\begin{align*}
i & := q+3v,\\
h & := 3 v + r,
\end{align*} 
with $0 \leq h \leq 3 v + q  = i \leq 6k+4$. Hence  $x \in A_{i,k}$.

If $\epsilon = 1$, then $x = i a + \frac{a}{2} + 3 + 2 h$, where  
\begin{align*}
i & := q+3v+1,\\
h & := 3 v + r,
\end{align*} 
with $0 \leq h \leq 3 v + q  = i - 1 \leq 6 k + 3$. Hence  $x \in B_{i,k}$.

\item All inequalities follow from the definition of the sets.

\item First we notice that 
\begin{align*}
|U_{i,k} \cap G| & = i +1,\\
|V_{i,k} \cap G| & = i,\\
|(U_{i,k} \cup V_{i,k}) \cap G| & = 2 i +1,
\end{align*}
for any $i \in [1,3k+1]$.

Now we take a set of consecutive integers $\{i_1, i_2, i_3 \}$ such that $1 \leq i_1 < i_2 < i_3 \leq 3 k + 1$ and $i_j \equiv j \Mod{3}$ for any $j \in \{1, 2, 3 \}$. 
For any such an index $j$ we have that
\begin{align*}
A_{i_j,k} & = U_{i_j,k} \cap G,\\
B_{i_j,k} & = V_{i_j,k} \cap G.
\end{align*} 

If $j =1$, then $|(U_{i_1,k} \cup V_{i_1,k}) \cap G|  \equiv 0 \Mod{3}$ and the sequence formed by the greatest two elements of $A_{i_1,k}$ and the smallest element of $B_{i_1,k}$ is a $3$-permutation according to the definition of such sets. The remaining elements of $\overline{g \cap A_{i_1,k}}$ and $\overline{g \cap B_{i_1,k}}$ can be obtained through concatenations of $3$-permutations. We notice that this argument applies in particular when $i = 3k+1$.

We can also check that the sequence formed by the greatest two elements of $B_{i_2,k}$ and the smallest element of $A_{i_3,k}$ is a $3$-permutation. The sequences of the remaining elements of $g$ belonging to $A_{i_2,k}$, $B_{i_2,k}$, $A_{i_3,k}$ and $B_{i_3,k}$ are obtained (modulo $3$) through concatenations of $3$-permutations too.

Now we notice that 
\begin{align*}
|U_{3k+2,k} \cap G| & = 3 k + 3 \equiv 0 \Mod{3},\\
|V_{3k+2,k} \cap G| & = \frac{a}{4} - 1 \equiv 1 \Mod{3}. 
\end{align*}

As regards the indices $i \in [3k+3,6k+3]$, we have that
\begin{align*}
|U_{i,k} \cap G| & = \left(\frac{a}{4} + 2 \right) + \left(i - \frac{a}{4} \right) = 2 + i,\\
|V_{i,k} \cap G| & = \left(\frac{a}{4} - 1  \right) + \left(i - \frac{a}{4} + 1 \right) = i -2.
\end{align*}

In particular we notice that 
\begin{equation*}
|(U_{i,k} \cup V_{i,k}) \cap G| \equiv 2 i \Mod{3}
\end{equation*}
for any $i \in [3k+2,6k+3]$.

We have that 
\begin{align*}
U_{3k+2,k} \cap G & = \left[ia, ia + \frac{a}{2} \right]_2,\\
V_{3k+2,k} \cap G & = \left[ia + \frac{a}{2} + 3, (i+1)a - 1 \right]_2.
\end{align*}

The sequence formed by the elements of 
\begin{equation*}
((U_{3k+2,k} \cup V_{3k+2,k}) \cap G) \backslash \{(3k+3)a-1 \}
\end{equation*}
is given (modulo $3$) by a concatenation of $3$-permutations. Moreover the sequence formed by $(3k+3)a-1$ and the two smallest elements of $U_{3k+3,k} \cap G$ is (modulo $3$) a $3$-permutation too, since it reads as follows (modulo $3$):
\begin{equation*}
(\overline{(3k+3)a-1}, \overline{(3k+3) a}, \overline{(3k+3)a+1}).
\end{equation*}

Now let $\{i_2, i_0, i_1 \}$ be a set of three consecutive integers such that $3k+2 \leq i_2 < i_0 < i_1 \leq 6 k + 1$ and $i_j \equiv j \Mod{3}$ for any $j$.
Then

\begin{equation*}
\sum_{j=0}^2 |((U_{i_j} \cap G) \cup (V_{i_j} \cap G))| \equiv 0 \Mod{3}.
\end{equation*}

Moreover the sequence of elements in 
\begin{equation*}
\cup_{j=0}^2 ((U_{i_j} \cap G) \cup (V_{i_j} \cap G)) 
\end{equation*}
is given (modulo $3$) by concatenations of $3$-permutations. We analyse in detail just the case $j=2$. 
Since we have already considered the case $i_j = 3k+2$, we can suppose that $i_j > 3k+2$.

The elements of $U_{i_{2},k} \cap G$ are given by the following union of intervals:
\begin{equation*}
\left[i_{2} a, (i_{2} - 1) a + \frac{a}{2} + 2 i_{2} - 1 \right] \cup \left[(i_{2}-1) a + \frac{a}{2} + 2 i_{2}, i_{2} a + \frac{a}{2} + 2 \right]_2.
\end{equation*}

We notice that the sequence (modulo $3$) formed by the elements in
\begin{equation*}
\left[i_{2} a, (i_{2} - 1) a + \frac{a}{2} + 2 i_{2} - 1 \right]
\end{equation*}
is given by a concatenation of $3$-permutations. Moreover 
\begin{equation*}
\left| \left[(i_{2}-1) a + \frac{a}{2} + 2 i_{2}, i_{2} a + \frac{a}{2} + 2 \right]_2 \right| \equiv 1 \Mod{3}.
\end{equation*}
The sequence formed by the greatest element of $U_{i_{2},k} \cap G$ and the two smallest elements in $V_{i_{2},k} \cap G$ reads as follows (modulo $3$):
\begin{equation*}
\left( \overline{i_{2} a + \frac{a}{2} + 2}, \overline{i_{2} a + \frac{a}{2} + 3}, \overline{i_{2} a + \frac{a}{2} + 4} \right) = (\overline{1}, \overline{2}, \overline{0}).
\end{equation*}
In a similar way we can check that the sequence (modulo $3$) of elements in 
\begin{equation*}
(V_{i_2,k} \cap G) \backslash \{(i_2+1) a - 1 \}
\end{equation*}
is given by a concatenation of $3$-permutations. 

Finally we notice that if 
\begin{equation*}
(i_2, i_0,i_1) = (6k+2, 6k+3, 6k+4),
\end{equation*}
then 
\begin{equation*}
|\cup_{j \in \{2,0 \}} ((U_{i_j,k} \cup V_{i_j,k}) \cap G)| \equiv 1 \Mod{3}.
\end{equation*}
The sequence formed by the greatest element of $V_{6k+3,k} \cap G$ and the smallest two elements of $U_{6k+4,k}$ reads (modulo $3$) as a $3$-permutation. The sequence of the elements in $G$ greater than $(6k+4)a+1$ is given by the elements in 
\begin{equation*}
\left[(6k+4) a + 2, (6k+4) a + \frac{a}{2} - 3 \right] \cup \left[(6k+4) a + \frac{a}{2} - 1, \infty \right[.
\end{equation*} 
The elements belonging to this latter union of intervals are given (modulo $3$) by concatenations of $3$-permutations.
\end{enumerate}
\end{proof}

\begin{lemma}\label{family_16}
Let $k$ be a positive integer, $a:=12k+8$, $b:=a + \frac{a}{2} + 1$ and $c := a + \frac{a}{2} + 3$.

Let $S := \left\{a, b, c \right\}$ and
\begin{equation*}
H_{16,k} := (\cup_{i=0}^{3k+2} (A_{i,k} \cup B_{i,k}) ) \cup (\cup_{j=0}^{3k+1} (C_{j,k} \cup D_{j,k})) \cup E, 
\end{equation*}
where $A_{0,k} := \{ 0 \}$,  $B_{0,k} := \emptyset$, and
\begin{align*}
A_{i,k} & := \left[ia,  i a + 6 \left\lfloor \frac{i}{3} \right\rfloor  \right]_2,\\
B_{i,k} & := A_{i-1,k} + \left\{ b, c \right\},
\end{align*}
for any $i \in \left[1, 3k + 2 \right]$, while
\begin{align*}
C_{j,k}  := & \left[(3 k + 3 +  j) a, (3k+3+j)a + 6 \left\lfloor \frac{j+1}{3} \right\rfloor \right] \\
& \cup \left[(3k+3+j) a + 6 \left\lfloor \frac{j+1}{3} \right\rfloor + 2, (3k+3+j) a + \frac{a}{2}   \right]_2,\\
D_{j,k} := & \left[(3 k + 3 +  j) a + \frac{a}{2} + 1, (3k+3+j)a + \frac{a}{2} + 3 + 6 \left\lfloor \frac{j}{3} \right\rfloor \right] \\
& \cup \left[(3k+3+j) a + \frac{a}{2} + 5 + 6 \left\lfloor \frac{j}{3} \right\rfloor, (3k+4+j) a - 1  \right]_2,\\
\end{align*}
for any $j \in [0,3k+1]$ and
\begin{equation*}
E := \left[(6k+4)a  + \frac{a}{2} + 1, \infty \right[.
\end{equation*}

Then the following hold.
\begin{enumerate}
\item $x \in G$ if and only if 
\begin{equation*}
x = (s + q) a + q \cdot \frac{a}{2}  + q + 2 r,
\end{equation*}
where $\{q, r, s \} \subseteq \N$ with $0 \leq r \leq q$.
\item $S \subseteq H_{16,k} \subseteq G$.
\item The following inequalities hold:
\begin{displaymath}
\begin{array}{ll}
A_{i,k} < B_{i,k} & \text{for any $i \in [1,3k+2]$,}\\
B_{i,k} < A_{i+1,k} & \text{for any $i \in [1,3k+1]$,}\\
B_{3k+2,k} < C_{0,k},\\
C_{j,k} < D_{j,k} & \text{for any $j \in [0,3k+1]$,}\\
D_{j,k} < C_{j+1,k} & \text{for any $j \in [0, 3k]$,}\\
D_{3k+1,k} < E.
\end{array}
\end{displaymath}
\item $H_{16,k}$ is a numerical semigroup.
\item $H_{16,k}$ is a $3$-permutation semigroup.
\end{enumerate}
\end{lemma}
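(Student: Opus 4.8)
The plan is to prove the five assertions in the order listed, following the template of Lemmas~\ref{family_12} and~\ref{family_15}, whose generators likewise involve $\frac a2$. For~(1) I would first observe that $c = b+2$, so $\{b,c\}$ is an arithmetic progression of common difference $2$; Lemma~\ref{lem_arith_seq} applied to $\langle b,c\rangle$ then gives that every element of $G$ has the form $x = sa + qb + 2r$ with $\{s,q,r\}\subseteq\N$ and $0\le r\le q$. Since $b = a + \frac a2 + 1$, we have $qb = qa + q\cdot\frac a2 + q$, and absorbing $qa$ into the coefficient of $a$ yields the stated normal form $x = (s+q)a + q\cdot\frac a2 + q + 2r$.

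For~(2) the inclusion $S\subseteq H_{16,k}$ is checked directly: $A_{1,k} = \{a\}$ because $\lfloor 1/3\rfloor = 0$, and $B_{1,k} = A_{0,k} + \{b,c\} = \{b,c\}$. The inclusion $H_{16,k}\subseteq G$ I would verify block by block against the normal form. The crucial mechanism is that taking $q = 2l$ turns $qb$ into $3la + 2l$, so a generic element $ia + 2m$ of $A_{i,k}$ is produced by $s := i - 3l$ and $r := m - l$; the constraints $0\le r\le q$ and $s\ge 0$ force $m\in[l,3l]$ for each admissible $l\in[0,\lfloor i/3\rfloor]$, and the union of these ranges is exactly $[0, 3\lfloor i/3\rfloor]$, the index set of $A_{i,k}$. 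Because $B_{i,k}$, $C_{j,k}$ and $D_{j,k}$ are translates of $A$-type sets by $\{b,c\}$ or explicit intervals of the same shape, the same substitution disposes of them, the terms $6\lfloor\cdot/3\rfloor$ being precisely the even-$q$ bookkeeping. Assertion~(3) is then a finite list of endpoint comparisons read off the definitions.

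For~(4) I would show that $H_{16,k}$ is a submonoid by a closure table in the spirit of Lemma~\ref{family_3}: for $x$ and $y$ in blocks indexed by $i_1$ and $i_2$, the sum $x+y$ falls into a block indexed by $i_1+i_2$ (with a possible unit carry), and $0\in H_{16,k}$ by definition. Co-finiteness is immediate from $E = [(6k+4)a + \frac a2 + 1,\infty[\,\subseteq H_{16,k}$, so $H_{16,k}$ is a numerical semigroup. Together with~(2) this gives $G = \langle S\rangle\subseteq H_{16,k}\subseteq G$, hence $G = H_{16,k}$; this identification is what lets us speak of the $3$-permutation property in~(5), the three smallest positive elements of $H_{16,k}$ being $a<b<c$.

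The genuine work is in~(5). The guiding computation is $|A_{i,k}| = 3\lfloor i/3\rfloor + 1$ and $|B_{i,k}| = 3\lfloor(i-1)/3\rfloor + 2$, whence $|A_{i,k}\cup B_{i,k}|\equiv 0\pmod 3$ for every $i$, and one checks similarly that $|C_{j,k}\cup D_{j,k}|\equiv 0\pmod 3$; this partitions the ordered positive elements into consecutive blocks of cardinality divisible by $3$. On each block, and at each boundary listed in~(3), I would display the three residues at the junction and confirm they realise a $3$-permutation, the interior elements then following from the step-$2$ structure together with the elementary fact, recorded just after Lemma~\ref{lem_arith_seq}, that $(\overline{x_0},\overline{x_0+2},\overline{x_0+1})$ is a $3$-permutation. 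The main obstacle is sheer bookkeeping: the defining sets carry floor functions and the parity restriction $[\cdot]_2$, the index regime changes at $i = 3k+3$ where the $A,B$ families hand off to the $C,D$ families, and one must carry the residues correctly through the transitions $B_{3k+2,k}\to C_{0,k}$, $D_{j,k}\to C_{j+1,k}$ and $D_{3k+1,k}\to E$ without a single misalignment. I would organise this as in Lemma~\ref{family_15}, grouping indices into residue triples $i_j\equiv j\pmod 3$ and checking one representative junction in each regime.
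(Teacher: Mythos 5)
Your plan coincides with the paper's proof in every essential respect: the normal form in (1) via Lemma~\ref{lem_arith_seq} applied to $\langle b,c\rangle$, membership of the $A,B,C,D$ blocks via the substitution $q=2l$ (the paper's $2h=2\tilde h+2r$ with $0\le r\le 2\tilde h$) together with translate identities for the $B,C,D$ sets, a closure table for (4), and the cardinality-mod-$3$ count plus junction-residue checks for (5). The only step your sketch leaves implicit is $E\subseteq G$, which does not follow from the block substitution alone and needs the paper's standard Ap\'ery-set argument (exhibiting at least $a$ consecutive elements of $G$, here $D_{3k+1,k}\cup[(6k+5)a,(6k+6)a]$, so that $\Ap(G,a)\subseteq[a,(6k+6)a]$); this is routine and fully consistent with the template you cite.
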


\begin{proof}
\begin{enumerate}[leftmargin=*]
\item We have that $x \in G$ if and only if 
\begin{equation*}
x = sa + t \left(a + \frac{a}{2} + 1 \right) + u \left( a + \frac{a}{2} + 3 \right)
\end{equation*}
for some $\{s, t, u \} \subseteq \N$. According to Lemma \ref{lem_arith_seq} we can write
\begin{align*}
x & = sa + q \left(a + \frac{a}{2} + 1 \right) + 2 r = (s+q)a + q \cdot \frac{a}{2} + q + 2 r 
\end{align*}
for some $\{q, r \} \subseteq \N$ with $0 \leq r \leq q$.

\item First we notice that $S \subseteq (A_{1,k} \cup B_{1,k})$.

Let $x \in A_{i,k}$ for some $i \in [1,3k+2]$. Then
\begin{equation*}
x = ia +2 h
\end{equation*}
with $0 \leq h \leq 3 \left\lfloor \frac{i}{3} \right\rfloor$. We can write
\begin{equation*}
2 h = 2 \tilde{h} + 2 r
\end{equation*}
for some $\{ \tilde{h}, r \} \subseteq \N$ such that 
\begin{equation*}
0 \leq \tilde{h} \leq \left\lfloor \frac{i}{3} \right\rfloor \quad \text{and} \quad 0 \leq r \leq 2 \tilde{h}.
\end{equation*}
If we set 
\begin{align*}
s & := i - 3 \tilde{h},\\
q & := 2 \tilde{h}, 
\end{align*}
then
\begin{equation*}
x = (s + q) a + q \cdot \frac{a}{2}  + q + 2 r \in G.
\end{equation*}

If $x \in B_{i,k}$ for some $i \in [1,3k+2]$, then $x \in G$ by definition of the sets $B_{i,k}$.

Now we consider the sets $C_{j,k}$.

First we notice that 
\begin{align*}
C_{0,k} & \subseteq (A_{3k,k} + \{ 3a, 2b, 2 c \}),\\
C_{1,k} & = C_{0,k} + \{ a \},\\
D_{0,k} & \subseteq (B_{3k+2,k} + \{ a \}) \cup (B_{3k+1,k} + \{ c \}),\\
D_{1,k} & = D_{0,k} + \{ a \}. 
\end{align*} 

Then we have that
\begin{align*}
C_{2,k} & \subseteq (C_{1,k} + \{ a \}) \cup (C_{0,k} + \{b, c \}) \cup (D_{0,k} + \{ c \}), \\
D_{2,k} & = D_{1,k} + \{ a \}.
\end{align*}

Finally, for any $j \in [3,3k+1]$ the following hold.
\begin{itemize}
\item If $j \equiv 0 \Mod{3}$, then 
\begin{align*}
C_{j,k} & = C_{j-1,k} + \{ a \},\\
D_{j,k} & \subseteq D_{j-3,k} + \{3a, 2b, 2c \}.
\end{align*}
\item If $j \equiv 1 \Mod{3}$, then 
\begin{align*}
C_{j,k} & = C_{j-1,k} + \{ a \},\\
D_{j,k} & = D_{j-1,k} + \{ a \}.
\end{align*}
\item If $j \equiv 2 \Mod{3}$, then 
\begin{align*}
C_{j,k} & \subseteq C_{j-3,k} + \{ 3a, 2b, 2c \},\\
D_{j,k} & = D_{j-1,k} + \{ a \}.
\end{align*}
\end{itemize}

Now we notice that 
\begin{align*}
D_{3k+1,k} & = \left[(6k+4)a + \frac{a}{2} + 1, (6k+5)a - 1  \right]
\end{align*}
and
\begin{align*}
[(6k+5)a, (6k+6)a] \subseteq C_{3k-1,k} + \{3a, 2b, 2c \}.
\end{align*}
Hence $\Ap(G,a) \subseteq [a,(6k+6)a]$ and $E \subseteq G$.

\item All inequalities follow immediately from the definition of the sets.

\item We notice that $H_{16,k}$ is a co-finite subset of $\N$.

Now we take $\{x, y \} \subseteq H_{16,k}$ and show that $x+y \in H_{16,k}$ dealing with different cases.

\begin{itemize}
\item \emph{Case 1:} $x \in E$ or $y \in E$. Then $x+y \in E$.
\item \emph{Case 2:} $\{x, y \} \subseteq [(3k+3)a, \infty[$. Then $x+y \in E$.
\item \emph{Case 3:} $x < (3k+3)a$ and $y < (6k+4)a  + \frac{a}{2} + 1$. 

We have that
\begin{equation*}
x \in A_{i_1,k} \cup B_{i_1,k}
\end{equation*}
for some $i_1 \in [0,3k+2]$. We analyse in detail just the case $x \in A_{i_1,k}$.

If $x + y \in E$, then we are done.

Now we suppose that $x + y \not \in E$ and consider some subcases.

\begin{itemize}
\item If $y \in A_{i_2,k}$ for some $i_2 \in [0,3k+2]$, then 
\begin{equation*}
x + y \in A_{i_1+i_2,k}
\end{equation*}
if $i_1 + i_2 \leq 3 k +2$, else 
\begin{equation*}
x+y \in C_{i_1+i_2-(3k+3),k} \cup D_{i_1+i_2-(3k+3),k}.
\end{equation*}

\item If $y \in B_{i_2,k}$ for some $i_2 \in [0,3k+2]$, then 
\begin{equation*}
x + y \in B_{i_1+i_2,k}
\end{equation*}
if $i_1 + i_2 \leq 3 k +2$, else 
\begin{equation*}
x+y \in D_{i_1+i_2-(3k+3),k} \cup C_{i_1+i_2+1-(3k+3),k}.
\end{equation*}

\item If $y \in C_{j,k}$ for some $j \in [0,3k+1]$, then 
\begin{equation*}
x + y \in C_{i+j,k} \cup D_{i+j,k}.
\end{equation*}

\item If $y \in D_{j,k}$ for some $j \in [0,3k+1]$, then 
\begin{equation*}
x + y \in D_{i+j,k} \cup C_{i+j+1,k}.
\end{equation*}
\end{itemize}

\end{itemize} 

\item For any $i \in [1,3k+2]$ we have that 
\begin{align*}
|A_{i,k}| & = 3 \left\lfloor \frac{i}{3} \right\rfloor + 1 \equiv 1 \Mod{3},\\
|B_{i,k}| & = 3 \left\lfloor \frac{i-1}{3} \right\rfloor + 2 \equiv 2 \Mod{3}.
\end{align*}
Therefore $|A_{i,k} \cup B_{i,k}| \equiv 0 \Mod{3}$.

The sequence formed by the greatest element of $A_{i,k}$ and the two smallest elements of $B_{i,k}$ reads as follows (modulo $3$):
\begin{equation*}
\left( \overline{ia+ 6 \left\lfloor \frac{i}{3} \right\rfloor}, \overline{(i-1)a+ b}, \overline{(i-1) a + b + 2} \right) = (\overline{2i}, \overline{2i+2}, \overline{2i+1}).
\end{equation*}
All the other elements of $\overline{g \cap A_{i,k}}$ and $\overline{g \cap B_{i,k}}$ can be obtained via concatenations of $3$-permutations. 

As regards the sets $C_{j,k}$ and $D_{j,k}$, we have that 
\begin{align*}
|C_{j,k}| & \equiv 0 \Mod{3},\\
|D_{j,k}| & \equiv 0 \Mod{3}.
\end{align*}
Moreover, the elements of $\overline{g \cap C_{j,k}}$ and $\overline{g \cap D_{j,k}}$ are given by concatenations of $3$-permutations.

Therefore $H_{16,k}$ is a $3$-permutation semigroup.
\end{enumerate}
\end{proof}

\section{Open questions}\label{sec_n_pns}
\subsection{Towards the classification of $3$-permutation semigroups}
In Section \ref{sec_3_pns} we have constructed 16 families of $3$-permutation semigroups.

We notice that if $G$ is a $3$-permutation semigroup and $g_1 < g_2 < g_3$ are the three smallest positive elements of $G$, then 
\begin{align*}
g_2 \leq 2 g_1,\\
g_3 \leq 3 g_1.
\end{align*} 

We have written a  GAP function \verb+persgp(k,m,n)+, which returns the $k$ generators $g_1 < \dots < g_k$ of a $k$-permutation semigroup such that $m \leq g_1 < \dots < g_k \leq n$. 

In accordance with the remark above, we can find all  $3$-permutation semigroups of a fixed multiplicity $M > 0$ running the function \verb+persgp(k,m,n)+ with parameters $k =3$, $m \leq M$ and $n \geq 3M$.

For example, we can get the list of all $3$-permutation semigroups having multiplicity not greater than $11$ running \verb+persgp(3,1,33)+. The generating set of any such semigroup is one of the sets in Table 1.

In Table 2 we list all generating sets of $3$-permutation semigroups having multiplicity between $12$ and $35$. For any such a semigroup we also specify the family it belongs to. Such a list has been obtained running  \verb+persgp(3,12,105)+.

\newpage

\begin{minipage}[t]{0.3\textwidth}
\begin{displaymath}
\centering
\begin{array}{|c|c|c|}
\hline 
{\textbf{Table 1}}
\\
\hline
{S}
\\
\hline
\{ 1, 2 , 3 \} \\ 
\hline 
\{ 2, 3, 4 \} \\ 
\hline
\{3, 4, 5 \} \\ 
\hline 
\{4, 5, 6 \} \\ 
\hline 
\{5, 6, 7 \} \\ 
\hline 
\{5, 7, 9 \} \\ 
\hline 
\{6, 7, 11 \} \\ 
\hline 
\{7, 8, 12 \} \\ 
\hline 
\{7, 9, 11 \} \\ 
\hline 
\{7, 11, 12 \} \\ 
\hline 
\{8, 9, 10 \} \\ 
\hline 
\{8, 10, 15 \} \\ 
\hline 
\{8, 13, 15 \} \\ 
\hline 
\{9, 10, 17 \} \\ 
\hline 
\{9, 11, 16 \} \\ 
\hline 
\{10, 11, 15 \} \\ 
\hline 
\{10, 17, 18 \} \\ 
\hline 
\{11, 12, 13 \} \\ 
\hline 
\{11, 13, 21 \} \\ 
\hline 
\{11, 15, 16 \} \\ 
\hline 
\{11, 16, 18 \} \\ 
\hline 
\{11, 19, 21 \} \\
\hline 
\end{array} 
\end{displaymath} 
\end{minipage}
\begin{minipage}[t]{0.65\textwidth}
\begin{displaymath}
\begin{array}{|c||c||c|}
\hline
\multicolumn{3}{|c|}{\textbf{Table 2}}\\
\hline
{S} & {S} & \text{Family}\\
\hline 
\{12, 13, 23  \} & \{24, 25, 47 \} & H_{1,k} \\ 
\hline 
\{13, 14, 21 \} & \{ 25, 26, 39 \} & H_{2,k}\\ 
\hline 
\{ 13, 15, 17 \} & \{25, 27, 29\} & H_{4,k}\\ 
\hline 
\{13, 20, 21\} & \{25, 38, 39\} & H_{3,k}\\ 
\hline 
\{13, 20, 24\} & \{25, 44, 48\} & H_{5,k}\\ 
\hline 
\{13, 23, 24\} & \{25, 47, 48\} & H_{6,k}\\ 
\hline 
\{14, 15, 16\} & \{26, 27, 28\} & H_{7,k}\\ 
\hline 
\{14, 16, 21\} & \{26, 28, 39\} & H_{8,k}\\ 
\hline 
\{14, 25, 27\} & \{26, 49, 51\} & H_{9,k}\\ 
\hline 
\{15, 16, 29\} & \{27, 28, 53\} & H_{1,k}\\ 
\hline 
\{15, 17, 28\} & \{27, 29, 52\} & H_{10,k}\\ 
\hline 
\{16, 17, 24\} & \{28, 29, 42\} & H_{11,k}\\ 
\hline 
\{16, 21, 23\} & \{28, 39, 41\} & H_{12,k}\\ 
\hline 
\{16, 29, 30\} & \{28, 53, 54\} & H_{6,k}\\ 
\hline 
\{17, 18, 19\} & \{29, 30, 31\} & H_{7,k}\\ 
\hline 
\{17, 24, 25\} & \{29, 42, 43\} & H_{13,k}\\ 
\hline 
\{17, 28, 30\} & \{29, 52, 54\} & H_{14,k}\\ 
\hline 
\{17, 31, 33\} & \{29, 55, 57\} & H_{9,k}\\ 
\hline 
\{18, 19, 35\} & \{30, 31, 59\} & H_{1,k}\\ 
\hline 
\{19, 20, 30\} & \{31, 32, 48\} & H_{2,k}\\ 
\hline 
\{19, 21, 23\} & \{31, 33, 35\} & H_{4,k}\\ 
\hline 
\{19, 29, 30\} & \{31, 47, 48\} & H_{3,k}\\ 
\hline 
\{19, 32, 36\} & \{31, 56, 60\} & H_{5,k}\\ 
\hline 
\{19, 35, 36\} & \{31, 59, 60\} & H_{6,k}\\ 
\hline 
\{20, 21, 22\} & \{32, 33, 34\} & H_{7,k}\\ 
\hline 
\{20, 22, 33\} & \{32, 34, 51\} & H_{15,k}\\ 
\hline 
\{20, 31, 33\} & \{32, 49, 51\} & H_{16,k}\\ 
\hline 
\{20, 37, 39\} & \{32, 61, 63\} & H_{9,k}\\ 
\hline 
\{21, 22, 41\} & \{33, 34, 65\} & H_{1,k}\\ 
\hline 
\{21, 23, 40\} & \{33, 35, 64\} & H_{10,k}\\ 
\hline 
\{22, 23, 33\} & \{34, 35, 51\} & H_{11,k}\\ 
\hline 
\{22, 41, 42\} & \{34, 65, 66\} & H_{6,k}\\ 
\hline 
\{23, 24, 25\} & \{35, 36, 37\} & H_{7,k}\\ 
\hline 
\{23, 33, 34\} & \{35, 51, 52\} & H_{13,k} \\ 
\hline 
\{23, 40, 42\} & \{35, 64, 66\} & H_{14,k}\\ 
\hline 
\{23, 43, 45\} & \{35, 67, 69\} & H_{9,k}\\ 
\hline 
\end{array}
\end{displaymath} 
\end{minipage}

\vspace{0.5cm}

The data above and other experimental data for $3$-numerical semigroups seem to support the following conjecture.

\begin{conjecture}
Any $3$-numerical semigroup having multiplicity at least $12$ belongs to one of the family $H_{1,k} - H_{16,k}$ of Section \ref{sec_3_pns}.
\end{conjecture}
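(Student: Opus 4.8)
The plan is to convert the \emph{infinite} permutation condition into a \emph{finite} search and then settle each residue class uniformly in $k$. The reverse inclusions are already supplied by Lemmas \ref{family_1}--\ref{family_16}, so only the forward direction is at stake: every $3$-permutation semigroup $G$ with multiplicity $m := g_1 \geq 12$ must have its three smallest positive elements matching one of the sixteen parametric triples. The first reduction uses the displayed inequalities $g_2 \leq 2 g_1$ and $g_3 \leq 3 g_1$, which hold because $2 g_1, 3 g_1 \in G$ force $g_2, g_3$ below them; together with the requirement that $(\overline{g_1}, \overline{g_2}, \overline{g_3})$ be a $3$-permutation, these bound the admissible triples and, once $m$ is written in the appropriate residue class, should exhibit $g_2$ and $g_3$ as affine functions of $k$.

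First I would record a local reformulation of the permutation property in terms of the successive differences $\delta_i := g_{i+1} - g_i$. A block $(g_{3j+1}, g_{3j+2}, g_{3j+3})$ is a $3$-permutation precisely when $\{\delta_{3j+1} \bmod 3, \delta_{3j+2} \bmod 3\} \subseteq \{1, 2\}$ and $\delta_{3j+1} + \delta_{3j+2} \not\equiv 0 \pmod 3$; equivalently, the two intra-block gaps are \emph{both} $\equiv 1$ or \emph{both} $\equiv 2 \pmod 3$, while the boundary gap $\delta_{3j+3}$ is unconstrained. This local rule is the engine: it turns ``$G$ is a $3$-permutation semigroup'' into a statement purely about the residues mod $3$ of the successive differences of $G$, which in turn are governed by the shape of $\Ap(G, m)$.

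Next I would split into cases according to $m \bmod 6$, refining to $m \bmod 12$ for the families whose generators differ by even amounts ($H_{8}, H_{12}, H_{15}, H_{16}$, with multiplicities $\equiv 2, 4, 8 \pmod{12}$). For each class the candidate values of $g_2 - g_1$ and $g_3 - g_1$ are narrowed using the immediate Lemma on arithmetic triples of common difference $1$ or $2$, the gap rule above, and the fact that the sorted order of the elements just above $m$ is forced once $g_2 - g_1$ and $g_3 - g_2$ are fixed. Concretely, for each surviving triple one reconstructs $\Ap(G, m)$ as a union of the arithmetic ``rungs'' that appear in the family lemmas (the sets $A_{i,k}, B_{i,k}, \dots$) and checks, rung by rung, whether the gap rule can hold along the whole staircase; all non-family triples should fail at some rung, and the threshold $m \geq 12$ is exactly what excludes the sporadic triples of Table 1 that lie outside every family.

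The hard part will be the exhaustiveness and uniformity of this elimination. Ruling out a \emph{single} deviant triple is routine, but the number of a priori candidates grows linearly with $m$, so one needs an argument uniform in $k$: any deviation of $g_2 - g_1$ or $g_3$ from its family value must be shown to violate the gap rule at a \emph{predictable} rung of the Apéry staircase, independent of how large $k$ is. This is most delicate when the gaps $g_2 - g_1$ and $g_3 - g_2$ are of comparable size, since then the sorted order of the sums $\alpha g_1 + \beta g_2 + \gamma g_3$ interleaves in many ways and the staircase branches into numerous subcases, exactly as already visible in the proofs of Lemmas \ref{family_5}, \ref{family_12}, and \ref{family_15}. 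Controlling this branching --- equivalently, proving that the Apéry set of every $3$-permutation semigroup with $m \geq 12$ is forced into one of the sixteen staircase shapes --- is, I expect, the crux, and is presumably what keeps the statement at the level of a conjecture rather than a theorem.
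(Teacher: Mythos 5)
The statement you are addressing is presented in the paper as a conjecture and is not proved there: the only support the author offers is the output of the GAP search reported in Tables 1 and 2 (all $3$-permutation semigroups of multiplicity up to $35$). So there is no proof in the paper to compare yours against, and the relevant question is whether your proposal supplies one. It does not. What you give is a correct and sensible framework: the bounds $g_2 \leq 2 g_1$ and $g_3 \leq 3 g_1$ do hold (indeed for any numerical semigroup, since $g_1 < 2g_1 < 3g_1$ are three distinct positive elements of $G$); your reformulation of the block condition in terms of consecutive gaps is right (a block is a $3$-permutation precisely when its two internal gaps are both $\equiv 1$ or both $\equiv 2$ modulo $3$, the boundary gap being unconstrained); and the case split on $m$ modulo $6$, refined to modulo $12$ for $H_{8,k}$, $H_{12,k}$, $H_{15,k}$, $H_{16,k}$, matches the multiplicities of the sixteen families. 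But the decisive step --- showing that every triple $(g_1, g_2, g_3)$ with $g_1 \geq 12$ that is not of one of the sixteen parametric shapes violates the gap rule at some block, uniformly in $k$ --- is only described, never carried out. You identify this yourself as the crux and concede that you cannot yet control the branching of the Ap\'ery staircase when $g_2 - g_1$ and $g_3 - g_2$ are comparable; at that point the text stops being a proof and becomes a research plan.

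Two further cautions if you pursue this. First, your search space must respect the definition: a $3$-permutation semigroup is required to satisfy $G = \langle g_1, g_2, g_3 \rangle$, so candidates with $g_3 \in \langle g_1, g_2 \rangle$ are excluded at the outset, and the elimination must also handle triples where $g_2 = 2g_1$ or $g_3 = 3g_1$ (degenerate boundary cases of your bounds) separately. Second, the assertion that the threshold $m \geq 12$ is ``exactly what excludes the sporadic triples of Table 1'' is an empirical reading of the tables, not something your framework derives; a genuine proof would have to explain why no new sporadic families appear for large $m$, which is again the uniformity problem you leave open. As it stands, the conjecture remains a conjecture.
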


\subsection{On $n$-permutation semigroups}
In Lemma \ref{family_n_1} we construct a family of $n$-permutation semigroups for any $n \geq 3$. We notice in passing that the $3$-permutation semigroups of the family $H_{6,k}$ belong to this more general family of $n$-permutation semigroups. 

The experimental evidence suggests that many more $n$-numerical semigroups exist, but we think that a classification of all of them is (at the moment) out of reach.
 
\begin{lemma}\label{family_n_1}
Let $n \geq 3$ be an integer, $k$ a positive integer and $a:=nk+1$.
Let $S := \{ a \} \cup [2a - n, 2 a -2]$ and $G := \langle S \rangle$.

Let $ H:= \{ 0 \} \cup \left( \cup_{i=0}^{k-1} (A_{i,k}  \cup B_{i,k}) \right) \cup [2ka, \infty[$, where
\begin{align*}
A_{i,k} & := \{(2i+1) a \} \cup [(2i+2) a - (i+1) n, (2i +2) a - 2],\\
B_{i,k} & := \{(2i+2) a \} \cup [(2i+3) a - (i+1) n, (2i +3) a - 2],
\end{align*}
for any $i \in [0,k-1]$.

Then the following hold.
\begin{enumerate}
\item $H$ is a numerical semigroup.
\item $G = H$.
\item $H$ is a $n$-permutation semigroup.
\end{enumerate}
\end{lemma}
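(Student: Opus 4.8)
The plan is to first extract a closed description of $G$ from Lemma \ref{lem_arith_seq}, read off both $G=H$ and the numerical‑semigroup property from it, and finally settle the $n$‑permutation property by a residue count. Since the generators other than $a$ form the arithmetic progression $2a-n,2a-n+1,\dots,2a-2$ of common difference $1$, Lemma \ref{lem_arith_seq} shows that any element of $G$ can be written as $x=pa+q(2a-n)+s$ with $p,q\in\N$ and $0\le s\le (n-2)q$. Setting $j:=p+2q$ this reads
\begin{equation*}
x = j a - q n + s, \qquad 0\le q\le \lfloor j/2\rfloor,\ \ 0\le s\le (n-2)q .
\end{equation*}
For fixed $j$, as $q$ runs over $[0,\lfloor j/2\rfloor]$ the quantity $-qn+s$ sweeps out $\{0\}\cup\bigcup_{q\ge1}[-qn,-2q]$; because $q(n-2)\ge1$ for $q\ge1$ and $n\ge3$, consecutive such intervals overlap, so this set equals $\{0\}\cup[-\lfloor j/2\rfloor n,-2]$. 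Hence I would record the key description: $x\in G$ if and only if $x=ja$ or $x\in[ja-\lfloor j/2\rfloor n,\,ja-2]$ for some $j\ge0$.

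Next I would prove $G=H$. For $H\subseteq G$ the points $ja$ are multiples of $a$; for an interval element $x=ja-t$ with $2\le t\le\lfloor j/2\rfloor n$ I would take $q:=\lceil t/n\rceil$, noting that for $t\ge2$ and $n\ge3$ one has $2\lceil t/n\rceil\le t\le n\lceil t/n\rceil$, while $t\le\lfloor j/2\rfloor n$ gives $q\le\lfloor j/2\rfloor$, so $x$ acquires the required form. The tail is handled by checking that $[2ka,(2k+1)a]$ is a block of $a+1$ consecutive integers contained in $G$ (using that $[2ka+1,(2k+1)a-2]$ is the level‑$(2k+1)$ interval, that $(2k+1)a-1$ lies in the level‑$(2k+2)$ interval, and that $2ka,(2k+1)a$ are multiples of $a$); since $a\in G$ this forces $[2ka,\infty[\subseteq G$. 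For the reverse inclusion I would match the description of $G$ with the sets $A_{i,k},B_{i,k}$: the definition $A_{i,k}=\{(2i+1)a\}\cup[(2i+2)a-(i+1)n,(2i+2)a-2]$ reproduces exactly the point of level $2i+1$ and the interval of level $2i+2$, and $B_{i,k}$ reproduces levels $2i+2$ and $2i+3$. The orderings $A_{i,k}<B_{i,k}<A_{i+1,k}$ (immediate from the definitions) show the pieces are disjoint and sorted, and a short estimate shows every level‑$m$ interval with $m\ge2k+2$ lies above $2ka$; consequently every $x\in G$ with $x<2ka$ falls into a unique $A_{i,k}$ or $B_{i,k}$, while every $x\in G$ with $x\ge2ka$ lies in the tail. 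This gives $G\subseteq H$, hence $G=H$. Claim (1) is then immediate: $\gcd S=1$ since $S$ contains the consecutive integers $2a-n,\dots,2a-2$, so $G$ — and therefore $H$ — is a numerical semigroup.

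For claim (3) the decisive computation is $|A_{i,k}|=|B_{i,k}|=(i+1)n\equiv0\Mod{n}$, obtained by adjoining the single point to the $\bigl((i+1)n-1\bigr)$‑element interval. Since $a\equiv1\Mod n$, every multiple $ma$ has residue $\overline m$, and the first element of the interval of $A_{i,k}$ has residue $\overline{2i+2}$, exactly one more than the residue $\overline{2i+1}$ of its point; as the interval consists of consecutive integers, the residues of $A_{i,k}$ listed in increasing order form a run of $(i+1)n$ consecutive residues, i.e. $(i+1)$ blocks of $n$ consecutive residues, each a complete system modulo $n$. The same holds for $B_{i,k}$, starting from $\overline{2i+2}$. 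Because each piece has cardinality divisible by $n$ and the pieces are enumerated in increasing order, every block boundary sits at a position $\equiv0\Mod n$ in the enumeration of $G\setminus\{0\}$; in particular the tail $[2ka,\infty[$, a string of consecutive integers, begins at a position $\equiv1\Mod n$ and is therefore itself a concatenation of $n$‑permutations. This yields claim (3).

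I expect the main obstacle to be the bookkeeping in claim (2): reconciling the $A_{i,k}/B_{i,k}$ description of $H$ with the union‑of‑intervals description of $G$, and in particular the boundary behaviour near $2ka$, where $B_{k-1,k}=[2ka,(2k+1)a-2]$ is absorbed into the tail and where one must rule out that higher‑level intervals dip below $2ka$.
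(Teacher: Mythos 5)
Your proof is correct, but it reaches parts (1) and (2) by a genuinely different route from the paper's. The paper first verifies directly that $H$ is closed under addition (a case table for sums of elements of $A_{i_1,k}\cup B_{i_1,k}$ and $A_{i_2,k}\cup B_{i_2,k}$), then proves $H\subseteq G$ piecewise — the upper part of each interval of $A_{i,k}$ via Lemma \ref{lem_arith_seq}, the lower part $(2i+2)a-j$ with $j\in[2,2i+1]$ by induction on $i$, and $B_{i,k}=A_{i,k}+\{a\}$ — and finally deduces $G\subseteq H$ from the fact that $H$ is a numerical semigroup containing $S$. You instead extract from Lemma \ref{lem_arith_seq} a single closed description of $G$ as $\{ja\}\cup[ja-\lfloor j/2\rfloor n,\,ja-2]$ over all levels $j$, the key point being that the intervals $[-qn,-2q]$ overlap when $n\ge 3$ and so merge into one interval per level; you then match this level decomposition literally with the sets $A_{i,k}$, $B_{i,k}$ (level $2i+2$ paired with the point $(2i+1)a$, level $2i+3$ with the point $(2i+2)a$), check that all levels $j\ge 2k+2$ sit above $2ka$, and obtain (1) for free from $\gcd(S)=1$ once $G=H$ is known. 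Your route dispenses with both the addition table and the induction, at the cost of the interval-merging computation; the paper's table has the merit of establishing the submonoid property of $H$ independently of the identification $G=H$, which is the pattern it uses uniformly across Lemmas \ref{family_1}--\ref{family_16}. The treatments of the tail (a block of $a+1$ consecutive integers starting at $2ka$, equivalently a bound on $\Ap(G,a)$) and of part (3) (each piece has cardinality $(i+1)n$ and carries a run of consecutive residues, so block boundaries align) essentially coincide, with your version spelling out the residue bookkeeping in more detail than the paper does.
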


\begin{proof}
\begin{enumerate}[leftmargin=*]
\item Let $\{i_1, i_2 \} \subseteq \N$ and $i_3 := i_1 + i_2 + 1$.
If
\begin{align*}
x & \in A_{i_1,k} \cup B_{i_2,k},\\
y & \in A_{i_{2},k} \cup B_{i_2,k},
\end{align*}
then $x+y$ belongs to one of the sets in rows $2-3$, columns $2-3$ of the following table.  

\begin{displaymath}
\begin{array}{|c|c|c|}
\hline
& A_{i_2,k} & B_{i_2,k}\\
\hline
A_{i_1,k} & A_{i_3,k} \cup B_{i_3-1,k} & A_{i_3,k} \cup B_{i_3,k}\\
\hline
B_{i_1,k} & & A_{i_3+1,k} \cup B_{i_3,k} \\
\hline
\end{array}
\end{displaymath}

Since $H$ is co-finite, we conclude that $H$ is a numerical semigroup.

\item First we notice that $S \subseteq H$.

Now we prove that $H \subseteq G$, discussing separately some cases.

\begin{itemize}
\item \emph{Case 1:} $A_{i,k} \subseteq G$ for any $i \in [0,k-1]$.

For any $j \in \N$ we have that $j a \in G$.

Moreover, for any $i \in [0,k-1]$ we have that 
\begin{equation*}
[(2i+2)a - (i+1) n, (2i+2) a - (2i+2)] \subseteq \langle [2a-n, 2a-2] \rangle
\end{equation*}
according to Lemma \ref{lem_arith_seq}.

We prove by induction on $i \in [0,k-1]$ that
\begin{equation*}
(2i+2) a - j \in G
\end{equation*}
for any $j \in [2,2i+1]$.

If $i = 0$, then there is nothing to prove.

If $i > 0$, then
\begin{align*}
(2i+2) a - j & = (2(i-1)+2) a - j + 2 a. 
\end{align*}

Since $j \leq 2i + 1 \leq 3i \leq n i$, we have that
\begin{align*}
(2(i-1)+2) a - j \geq (2(i-1)+2) a - i n.
\end{align*}
Therefore $(2(i-1)+2)a - j \in A_{i-1,k}$. Since $A_{i-1,k} \subseteq G$ by inductive hypothesis, we conclude that  $(2i+2) a - j \in G$.

\item \emph{Case 2:} $B_{i,k} \subseteq G$ for any $i \in [0,k-1]$.

We notice that $B_{i,k} = A_{i,k} + \{ a \}$ for any $i$. Hence $B_{i,k} \subseteq G$ for any $i$.

\item \emph{Case 3:} $[2ka, \infty[ \subseteq G$.

First we notice that
\begin{align*}
A_{k-1,k} & = [(2k-1) a, 2 k a - 2],\\
B_{k-1,k} & = [2ka, (2k+1) a-2].
\end{align*}

Then we observe that
\begin{align*}
(2k+1) a - 1 & = (2k-1) a + 1 + (2a-2) \subseteq A_{k-1,k} + A_{0,k}.
\end{align*}

Therefore $[2ka, (2k+1) a - 1] \subseteq G$, namely $\Ap(G,a) \subseteq [a,(2k+1)a-1]$. 

Hence $[2ka, \infty[ \subseteq G$.
\end{itemize}

\item We notice that 
\begin{equation*}
|A_{i,k}| = |B_{i,k}| = (i+1) n
\end{equation*}
for any $i \in [0,k-1]$, namely $n$ divides the cardinality of any set $A_{i,k}$ and $B_{i,k}$.

Since the elements of the sets $A_{i,k}$ and $B_{i,k}$ are obtained through concatenations of $n$-permutations, we conclude that $H$ is a $n$-permutation semigroup.

\end{enumerate}
\end{proof}

\bibliography{Refs}
\end{document}